\documentclass[11pt]{article}

\usepackage{subfigure}
\usepackage{color}
\usepackage[cmex10]{amsmath}
\usepackage{mathrsfs,amssymb,amsmath}
\usepackage{cases}
\usepackage{graphicx}
\usepackage{caption}

\usepackage{epstopdf}
\usepackage{hyperref}

\usepackage{float}
\usepackage{placeins}

\newcommand{\R}{{\mathbb R}}

\def\dref#1{(\ref{#1})}
\def\crr{\cr\noalign{\vskip-0.0mm}}

\usepackage{algorithm} 
\usepackage{algorithmic} 

\newtheorem{corollary}{Corollary}[section]
\newtheorem{remark}{Remark}[section]
\newtheorem{lemma}{Lemma}[section]

\newtheorem{proposition}{Proposition}

\newtheorem{theorem}{Theorem}[section]
\newtheorem{definition}{Definition}[section]

\newenvironment{proof}{{\bf {Proof.}}}{\hfill $\square$}
\allowdisplaybreaks[4]

\numberwithin{equation}{section}

\def\dref#1{(\ref{#1})}

\def\pt{\partial}

\def\e{\varepsilon}

\def\vp{\varphi}

\def\bf{\textbf}
\def\pt{\partial}

\def\Om{\Omega}

\def\al{\alpha}

\def\de{\delta}

\def\De{\Delta}

\def\iy{\infty}

\def\f{\frac}

\def\se{\setminus}

\def\df{\mathrm d}

\def\wt{\widetilde}
\def\wh{\widehat}

\def\essinf{\operatorname*{ess\ \! inf}}

\def\hra{\hookrightarrow}

	\DeclareMathOperator{\Div}{div}

	\DeclareMathOperator{\dist}{dist}

	\DeclareMathOperator{\supp}{{supp}}

	\newcommand{\N}{\mathbb N}

\begin{document}
	
	
	\title{\bf {\bf  Weak  Unique Continuation  on Annular Domains for Backward Degenerate Parabolic Equations with Degenerate Interior Points
}}
	\author{Dong-Hui Yang, Bao-Zhu Guo, Guojie Zheng, Jie Zhong}

\title{{\bf   {\bf Quantitative   Weak  Unique Continuation  on Annular Domains for Backward Degenerate Parabolic Equations with Degenerate Interior Points}}\footnote{\small This work was carried out with the support of the
National Natural Science Foundation of China under grant  nos. 12131008 and U23B2033, and
National Key R\&D Program of China under grant no. 2024YFA1013101.}}

\author{ Dong-Hui Yang$^{a}$\footnote{\small
The corresponding author. Email: donghyang@139.com},  Bao-Zhu Guo$^{b}$, Guojie Zheng$^{c}$,  Jie Zhong$^{d}$
\\
$^a${\it School of Mathematics and Statistics, Central South University}\\
			{\it Changsha 410075, P.R.China}\\
$^b${\it Academy of Mathematics and Systems Science, Academia Sinica, Beijing 100190, China}\\
$^c${\it College of Digital Technology and Engineering}\\
 {\it Ningbo University of Finance and Economics, Ningbo 315175, China} \\
$^d${\it   Department of Mathematics}\\
{\it California State University Los Angeles, Los Angeles, 90032, USA}
}
\date{}

	\maketitle{}
	\thispagestyle{empty}
	\thispagestyle{empty}
	
	
	\begin{abstract}
		  In this paper, we establish  a quantitative weak unique continuation theorem on an annular domain for a backward degenerate parabolic equation with a degenerate interior point. Our methodology hinges on approximating the solution of the degenerate parabolic equation through solutions of non-degenerate parabolic counterparts. Subsequently, we establish Carleman estimates for the non-degenerate parabolic equation across two separate domains. By virtue of these estimates, we deduce a quantitative weak unique continuation property for the degenerate parabolic equation, thereby substantiating the weak unique continuation result for the original degenerate parabolic equation.
	\end{abstract}
	
	\section{Introduction}
	
	 The unique continuation property for elliptic equations has been extensively explored in numerous studies, including \cite{Adolfsson,Brkri,Carleman,Garofalo2,Hormander,Kenig,Koch,Kukavica1,Kukavica2,Vessella1}. Carleman \cite{Carleman} pioneered the application of Carleman estimates to establish unique continuation, a method subsequently extended by H\"{o}rmander \cite{Hormander} and others. A comprehensive review of unique continuation can be found in \cite{Kenig}. Additionally, several papers have investigated unique continuation under the doubling balls or three balls condition, as evidenced in \cite{Kukavica1,Kukavica2}.
Similarly, unique continuation for parabolic equations has been investigated in various studies, such as \cite{Canuto,Escauriaza,FG,Lin,Phung,Vessella}. Carleman estimates remain a powerful technique for proving unique continuation in this context \cite{Canuto,Escauriaza,FG,Vessella}.
However, relatively few papers address unique continuation for degenerate elliptic and parabolic equations, and these typically focus on specific degenerate partial differential equations, e.g., \cite{Alabau,Banerjee,Garofalo,Wu}.
In this paper, we investigate a form of weak unique continuation for degenerate parabolic equations with degenerate interior points using Carleman estimates and approximation methods. The core concepts are as follows: a) Initially, approximate the solution of the degenerate parabolic equation through solutions of non-degenerate parabolic equations; b) Subsequently, derive Carleman estimates for non-degenerate equations; and c) Ultimately, utilize these estimates to approximate those for the degenerate parabolic equation and attain weak unique continuation. Nevertheless, this paper only considers weak unique continuation on certain annular domains; extending this to arbitrary domains remains future work. Utilizing an approximation method, we have previously obtained weak unique continuation for degenerate elliptic equations with degenerate interior points \cite{Wu1}; for more approximation techniques, we refer to \cite{Cora}.

The equation examined in this paper is the following backward degenerate parabolic equation featuring a degenerate interior point:
\begin{equation}\label{12.14.1}
\begin{cases}
\partial_t \varphi + \Div(|x|^\alpha \nabla \varphi) = 0, & \text{ in } Q, \\
\varphi = 0, & \text{ on } \partial Q, \\
\varphi(T) = \varphi_T, & \text{ in } \Omega,
\end{cases}
\end{equation}
where $\alpha \in (0,2)$ is a given constant, $\Omega \subset \mathbb{R}^N$ ($N \geq 2$) is a bounded domain containing $0$ with $\partial \Omega \in C^2$, $Q = \Omega \times (0,T)$ with $T > 0$ being a constant, and $\varphi_T \in L^2(\Omega)$ is the terminal data. We recall that the solution $\varphi$ of \eqref{12.14.1} satisfies the weak unique continuation property:
\begin{equation}\label{04.18.1}
\text{ If } \varphi = 0 \text{  on } \omega \times (0,T), \text{  then } \varphi = 0 \text{  on } \Omega.
\end{equation}
For solutions of degenerate parabolic equations with $A_p$ weight coefficients, a good approximation by $A_p$ weights has been considered in \cite[lemma 2,section 2]{FF}, or in \cite{CS2,CS3}. However, these approximate $A_p$ weights are merely Lebesgue measurable functions and may lack partial derivatives. To achieve weak unique continuation for degenerate parabolic equations with degenerate interior points, we require the approximate $A_p$ weights to be differentiable, necessitating a different approximation approach, which is a key contribution of this paper.
This paper focuses on dimensions $N \geq 2$, as the case $N = 1$ has been extensively studied, e.g., in \cite{Alabau}. We also consider annular domains since domains $\omega$ containing $0$ have been examined in \cite{Wu}.

The paper proceeds as follows: in Section \ref{Section 2}, we provide solutions for the degenerate parabolic equation \eqref{12.14.1}. In Section \ref{Section 3}, we identify an approximate non-degenerate parabolic equation for \eqref{12.14.1}. Sections \ref{Section 4} and \ref{Section 5} present two types of Carleman estimates, one within $\omega$ and the other outside $\omega$, both controllable by the part on $\omega$. Finally, in Section \ref{Section 6}, we establish the weak unique continuation \eqref{04.18.1} for the equation \eqref{12.14.1}.

 \section{Solution spaces}\label{Section 2}

In this section, we shall give the solution spaces of degenerate parabolic equation \eqref{12.14.1}.

Let $B_R = \{x \in \mathbb{R}^N \colon |x| < R\}$, and
\begin{equation*}
w = |x|^\alpha.
\end{equation*}
Define
\begin{equation*}
\Omega = A_{3R, 6R} = \{x \in \mathbb{R}^N \colon 3R < |x| < 6R\},
\end{equation*}
which is an open annulus subset with $0 < 3R < 8R < \mathrm{dist}(0, \partial \Omega)$.
It is evident that $w$ is an $A_{1 + \frac{2}{N}}$ weight since $w$ is an $A_p$ weight if and only if $-N < \alpha < N(p - 1)$ (see e.g., \cite[p.10]{Heinonen}, or   \cite[chapter IV]{Garcia-Cuerva}).

 We recall that for $1 < p < \infty$, a locally integrable, non-negative function $w$ is termed an $A_p$ weight if there exists a constant $C > 0$ such that, for all cubes $K$ in $\mathbb{R}^N$, the following inequality holds:
\begin{equation}\label{04.24.1}
\left(\frac{1}{|K|}\int_K w(x)\, \mathrm{d}x\right)\left(\frac{1}{|K|}\int_K w(x)^{-\frac{1}{p-1}}\, \mathrm{d}x\right)^{p-1} \leq C.
\end{equation}
The infimum $C(w,p)$ of the set of all positive constants $C$ satisfying inequality \eqref{04.24.1} is referred to as the $A_p$ constant of $w$. When $p = 1$, a locally integrable, non-negative function $w$ is said to be an $A_1$ weight if there exists a constant $C > 0$ such that, for all cubes $K$ in $\mathbb{R}^N$,
\begin{equation*}
\frac{1}{|K|}\int_K w(x)\, \mathrm{d}x \leq C \cdot \essinf_{x \in K} w(x).
\end{equation*}
For further references on $A_p$ weights and the $A_p$ constant of $w$, the reader is referred  to \cite{FF, Garcia-Cuerva, Heinonen}.
In what follows, we shall denote
\begin{equation*}
w(E) = \int_E w \, dx,
\end{equation*}
where $E \subset \Omega$ is a Lebesgue measurable set.
We define
\begin{equation*}
L^2(\Omega; w) = \left\{u \in \mathcal{D}'(\Omega) \colon \int_\Omega u^2 w \, dx < \infty\right\},
\end{equation*}
where $\mathcal{D}'(\Omega)$ is the distribution space and $\mathcal{D}(\Omega) = C_0^\infty(\Omega)$.
The inner product on $L^2(\Omega; w)$ is
\begin{equation*}
(u, v)_{L^2(\Omega; w)} = \int_\Omega uvw \, dx,
\end{equation*}
and the norm on $L^2(\Omega; w)$ is
\begin{equation*}
\|u\|_{L^2(\Omega; w)} = \left(\int_\Omega u^2 w \, dx\right)^{\frac{1}{2}}.
\end{equation*}
It is well known that $(L^2(\Omega; w), (\cdot, \cdot){L^2(\Omega; w)})$ is a Hilbert space and $(L^2(\Omega; w), \|\cdot\|{L^2(\Omega; w)})$ is a Banach space (see, e.g,\cite[chapter 1]{Heinonen}, or  \cite[chapter IV]{Garcia-Cuerva}).
Set
\begin{equation*}
H^1(\Omega; w) = \left\{u \in L^2(\Omega; w) \colon \frac{\partial u}{\partial x_i} \in L^2(\Omega; w), i = 1, \cdots, N\right\},
\end{equation*}
where $\frac{\partial u}{\partial x_i}$ is the distribution derivatives with respect to the space variables $x_i$ for
$i=1,2,\cdots, N$. The inner product on $H^1(\Omega; w)$ is
\begin{equation*}
(u, v)_{H^1(\Omega; w)} = \int_\Omega uvw \, dx + \sum_{i=1}^N \int_\Omega \frac{\partial u}{\partial x_i} \frac{\partial v}{\partial x_i} w \, dx,
\end{equation*}
and the norm is
\begin{equation*}
\|u\|_{H^1(\Omega; w)} = \left(\int_\Omega u^2 w \, dx + \sum_{i=1}^N \int_\Omega \left|\frac{\partial u}{\partial x_i}\right|^2 w \, dx\right)^{\frac{1}{2}}.
\end{equation*}
Define
\begin{equation*}
H_0^1(\Omega; w) = \text{ the closure of } \mathcal{D}(\Omega) \text{  in } H^1(\Omega; w).
\end{equation*}
Let $H^{-1}(\Omega; w)$ denote the dual space of $H_0^1(\Omega; w)$. It is clear that $H^1(\Omega; w)$ is a subspace of $\mathcal{D}'(\Omega)$, and it is also well known that $(H^1(\Omega; w), (\cdot, \cdot)_{H^1(\Omega; w)})$ is a Hilbert space and $(H^1(\Omega; w), \|\cdot\|_{H^1(\Omega; w)})$ is a Banach space (see, e.g., \cite[chapter 1]{Heinonen}, or  \cite[chapter IV]{Garcia-Cuerva}).

  The following Lemma \ref{08.16.L3} has been documented in \cite[theorem 1.3]{Fabes}, \cite[section 1.4, p.9]{Heinonen}, and \cite{Trudinger}. Nevertheless, for the sake of completeness within our paper, we re-establish Lemma \ref{08.16.L3} as Lemma \ref{08.15.L1}.
\begin{lemma}\label{08.16.L3}
Let $\Omega \subset \mathbb{R}^N$ represent an open, bounded domain. Assume $w \in A_p$ for $1 < p < \infty$. Consequently, there exist constants $C_\Omega$ and $\delta > 0$ such that for all $u \in C_0^\infty(\Omega)$ and all $k \in \left[1, \frac{N}{N - 1} + \delta\right]$,
\begin{equation*}
\|u\|_{L^{kp}(\Omega; w)} \leq C_\Omega \|\nabla u\|_{L^p(\Omega; w)},
\end{equation*}
where $C_\Omega$ depends exclusively on $N$, the $A_p$ constant of $w$, $p$, and the diameter of $\Omega$.
\end{lemma}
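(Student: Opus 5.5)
We follow the classical Fabes--Kenig--Serapioni route, which reduces the weighted Sobolev inequality to two-weight bounds for a fractional integral. The starting point is the pointwise representation for $u \in C_0^\infty(\Omega)$:
\begin{equation*}
|u(x)| \leq C_N \int_\Omega \frac{|\nabla u(y)|}{|x-y|^{N-1}}\, \mathrm{d}y = C_N I_1(|\nabla u|)(x).
\end{equation*}
This follows by integrating $\nabla u$ along rays from $x$ and averaging over directions. Write $f = |\nabla u|\chi_\Omega$. It then suffices to prove the weighted fractional integral estimate
\begin{equation*}
\Big(\int_\Omega |I_1 f|^{kp} w\, \mathrm{d}x\Big)^{\frac{1}{kp}} \leq C \Big(\int_\Omega |f|^p w\, \mathrm{d}x\Big)^{\frac1p},
\end{equation*}
where $C$ depends only on $N$, $p$, $C(w,p)$ and $\mathrm{diam}\,\Omega$.

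\textbf{Step 1: reduction to a maximal function.} We split $I_1 f(x)$ dyadically over the annuli $2^{-j-1}d < |x-y| \le 2^{-j}d$, where $d = \mathrm{diam}\,\Omega$. This dominates $I_1 f$ by a sum $\sum_j 2^{-j} d\, \frac{1}{|B_j|}\int_{B_j} f$ of averages over the balls $B_j = B(x, 2^{-j}d)$.

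We control each average in two ways, following Hedberg's trick. By H\"older's inequality and the $A_p$ condition,
\begin{equation*}
\frac{1}{|B|}\int_B f \le C\Big(\frac{1}{w(B)}\int_B f^p w\Big)^{1/p},
\end{equation*}
which is at most $C\, w(B)^{-1/p}\|f\|_{L^p(w)}$. Alternatively, the average is bounded by the Hardy--Littlewood maximal function $Mf(x)$.

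We also need a lower bound on $w(B)$ for small balls. The doubling property of $A_p$ weights, namely $w(B)/w(B') \ge c(|B|/|B'|)^{p}$ for $B \subset B'$, gives
\begin{equation*}
w(B(x,r)) \ge c\, (r/d)^{Np}\, w(B(x,d)),
\end{equation*}
and $w(B(x,d))$ is comparable to $w(\Omega)$.

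We then choose a cutoff radius. On scales $r < \rho$ we use $Mf$, and on scales $r \ge \rho$ we use the weighted bound. Optimizing in $\rho$ gives a pointwise inequality of the form
\begin{equation*}
I_1 f(x) \le C\, (Mf(x))^{\theta}\, \|f\|_{L^p(w)}^{1-\theta}.
\end{equation*}
Muckenhoupt's theorem says $M$ is bounded on $L^p(w)$. Combining the two, we integrate $|I_1 f|^{kp} w$ with $k\theta = 1$ and obtain the estimate for some $k > 1$.

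\textbf{Step 2: reaching the sharper exponent.} The simple bound $w(B) \gtrsim r^{Np}$ only yields some $k > 1$. To reach $k$ up to $\frac{N}{N-1} + \delta$, we use the self-improving property of $A_p$ weights, which states that $w \in A_{p-\epsilon}$ for some $\epsilon > 0$ depending only on $N$, $p$ and $C(w,p)$. This comes from the reverse H\"older inequality.

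The cleaner approach is the two-weight Sawyer/Chanillo--Wheeden type testing condition for $I_1$ on balls $B$ of radius $r \lesssim d$:
\begin{equation*}
r\, \frac{w(B)^{1/(kp)}}{|B|}\Big(\int_B w^{-1/(p-1)}\Big)^{(p-1)/p} \le C.
\end{equation*}
Using the $A_p$ condition, this reduces to $r\, w(B)^{\frac1{kp}-\frac1p} \le C$, i.e. to a lower bound $w(B) \gtrsim r^{\,p k/(k-1)}$ relative to $w(\Omega)$. The $A_{p-\epsilon}$ doubling exponent is $N(p-\epsilon)$, so this bound holds precisely when $k \le \frac{N(p-\epsilon)}{N(p-\epsilon)-p}$. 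This is at least $\frac{N}{N-1}$ for $p$ where that is meaningful, and reverse H\"older gives the extra $\delta$.

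The main obstacle is this step: to verify the testing condition uniformly, with constants depending only on $N$, $p$, $C(w,p)$ and $\mathrm{diam}\,\Omega$. Following \cite{Fabes}, we would first try the good-$\lambda$ inequality comparing $I_1 f$ with the fractional maximal function $M_{1}f(x)=\sup_r r\,|B|^{-1}\int_B f$. This comparison holds in $L^{q}(w)$ for all $A_\infty$ weights. We would then bound $M_1$ directly by the covering argument above.

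Finally, for $k$ strictly below the maximal value, the inequality follows from the maximal case by H\"older's inequality on the bounded set $\Omega$. This costs a factor of a power of $w(\Omega)$. One normalizes by $w(\Omega)$ so that the constant depends only on the stated quantities; this is the normalized form in \cite{Fabes}.
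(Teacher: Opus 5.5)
The paper gives no proof of this lemma. It only cites Fabes--Kenig--Serapioni \cite{Fabes} (and \cite{Heinonen}, \cite{Trudinger}), and the claimed re-establishment via Lemma~\ref{08.15.L1} is just a Hardy inequality for the specific weight $|x|^\alpha$, giving only $p=2$, $k=1$ for that weight. So the relevant benchmark is the Fabes--Kenig--Serapioni argument. Your Step~1 (potential bound, Hedberg splitting, $A_p$ doubling, Muckenhoupt's theorem) is that argument.

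The gap is that you misjudge what Step~1 gives and then handle the top of the range with tools you do not establish.

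\textbf{What Step 1 actually gives.} With doubling exponent $Np$, you have $w(B(x,r))\ge c\,(r/d)^{Np}w(\Omega)$ for $x\in\Omega$, $r\le d$. Only $w(B(x,d))\ge w(\Omega)$ is needed here; comparability fails for thin $\Omega$. Your own optimization then gives $I_1f(x)\le C\,d\,A^{1/N}(Mf(x))^{1-1/N}$, where $A=w(\Omega)^{-1/p}\|f\|_{L^p(\Omega;w)}$. That is exactly $k=\frac{N}{N-1}$, not merely ``some $k>1$''.

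\textbf{Why Step 2 does not close the argument.} Step~2 rests on a two-weight testing condition for $I_1$. Verifying that condition is the easy part, and you already do it in one line. What is hard is its \emph{sufficiency} for $I_1\colon L^p(w)\to L^{kp}(w)$, localized to $\Omega$. That is a genuine two-weight theorem (Sawyer--Wheeden with reverse-H\"older bumps, or Chanillo--Wheeden's balance condition), which you neither state precisely nor prove. A bare condition of that form is necessary but not sufficient in general. The good-$\lambda$ fallback is also a detour, and it would require controlling $M_1f$ on $\mathbb{R}^N\setminus\Omega$.

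\textbf{The fix.} Feed the self-improvement directly into Step~1. Since $w\in A_{p-\epsilon}$ with $\epsilon=\epsilon(N,p,C(w,p))$, you get $w(B(x,r))\ge c\,(r/d)^{N(p-\epsilon)}w(\Omega)$. Set $a=N(p-\epsilon)/p$, shrinking $\epsilon$ if needed so that $a\in(1,N)$.
\begin{itemize}
\item Dyadic scales $r>\rho$ contribute at most $C\sum r^{1-a}d^aA\le C\rho^{1-a}d^aA$, since $a>1$.
\item Scales $r\le\rho$ contribute at most $C\rho\,Mf(x)$.
\item Choose $\rho=d\min\{1,(A/Mf(x))^{1/a}\}$.
\end{itemize}
This gives
\begin{equation*}
I_1f(x)\le C\,d\,A^{1/a}\,(Mf(x))^{1-1/a},\qquad x\in\Omega .
\end{equation*}
Take $k=\frac{a}{a-1}=\frac{N(p-\epsilon)}{N(p-\epsilon)-p}>\frac{N}{N-1}$. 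Then $kp(1-\frac1a)=p$, so Muckenhoupt's theorem yields
\begin{equation*}
\Big(\frac{1}{w(\Omega)}\int_\Omega |u|^{kp}w\,\mathrm{d}x\Big)^{\frac{1}{kp}}\le C\,d\,\Big(\frac{1}{w(\Omega)}\int_\Omega|\nabla u|^{p}w\,\mathrm{d}x\Big)^{\frac1p}.
\end{equation*}
Smaller $k$ then follow by H\"older, as you say. Steps with the testing condition, good-$\lambda$ and $M_1$ are unnecessary.

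\textbf{Normalization.} Your closing remark should be stated as a correction, not a convenience. For $k>1$ the inequality as printed is false: unnormalized norms, with $C_\Omega$ depending only on $N$, $p$, $C(w,p)$ and $\operatorname{diam}\Omega$. Replacing $w$ by $\lambda w$ leaves all these quantities unchanged but scales the left side by $\lambda^{1/(kp)}$ and the right side by $\lambda^{1/p}$; let $\lambda\to0^+$. The normalized form above is what is true. It coincides with the printed one only at $k=1$, which is the only case the paper uses (Remark~\ref{08.16.R1}).
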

\begin{lemma}\label{08.15.L1}
For every $N \geq 2$ and $\alpha \in (0, 2)$, it holds that
\begin{equation*}
(N - 2 + \alpha)\left\||x|^{\frac{\alpha}{2} - 1}u\right\|_{L^2(\Omega)} \leq 2\|\nabla u\|_{L^2(\Omega; w)}
\end{equation*}
for all $u \in H_0^1(\Omega; w)$. Furthermore, if $u \in H_0^1(\Omega; w)$, then $u \in L^2(\Omega)$.
\end{lemma}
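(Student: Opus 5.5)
This is a weighted Hardy inequality, and I will prove it first for $u\in C_0^\infty(\Omega)$ and then pass to $H_0^1(\Omega;w)$ by density. The device is the divergence identity
\begin{equation*}
\Div\bigl(|x|^{\alpha-2}x\bigr)=(N-2+\alpha)|x|^{\alpha-2},\qquad x\neq 0 .
\end{equation*}
Since $\alpha>0$ and $N\ge 2$, the field $|x|^{\alpha-2}x$ has modulus $|x|^{\alpha-1}$, which is locally integrable. Hence the identity holds in the distributional sense on all of $\mathbb{R}^N$, with no contribution from the origin; this also covers the case where $\Omega$ is a general domain containing $0$.

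\textbf{Step 1 (smooth $u$).} Multiply the identity by $u^2$ and integrate by parts. To be careful at the origin, do this on $\Omega\setminus B_\varepsilon$. The boundary term on $\partial B_\varepsilon$ is $O(\varepsilon^{N-2+\alpha})\to 0$, and the term on $\partial\Omega$ vanishes because $u$ has compact support. This gives
\begin{equation*}
(N-2+\alpha)\int_\Omega |x|^{\alpha-2}u^2\,dx=-2\int_\Omega u\,|x|^{\alpha-2}\,x\cdot\nabla u\,dx .
\end{equation*}
Split the integrand on the right as $\bigl(|x|^{\frac{\alpha}{2}-1}|u|\bigr)\bigl(|x|^{\frac{\alpha}{2}}|\nabla u|\bigr)$ and apply Cauchy--Schwarz:
\begin{equation*}
(N-2+\alpha)\,\bigl\||x|^{\frac{\alpha}{2}-1}u\bigr\|_{L^2(\Omega)}^2\le 2\,\bigl\||x|^{\frac{\alpha}{2}-1}u\bigr\|_{L^2(\Omega)}\,\|\nabla u\|_{L^2(\Omega;w)} .
\end{equation*}
Dividing by the (finite) first norm gives the inequality; if that norm is zero there is nothing to prove.

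\textbf{Step 2 (density).} For $u\in H_0^1(\Omega;w)$, choose $u_n\in C_0^\infty(\Omega)$ with $u_n\to u$ in $H^1(\Omega;w)$. Step 1 applied to $u_n-u_m$ shows that $|x|^{\frac{\alpha}{2}-1}u_n$ is Cauchy in $L^2(\Omega)$. Since $u_n\to u$ in $L^2(\Omega;w)$, a subsequence converges a.e., so the $L^2$ limit is $|x|^{\frac{\alpha}{2}-1}u$. Passing to the limit in the inequality for $u_n$ then gives the inequality for $u$.

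\textbf{Step 3 ($u\in L^2(\Omega)$).} Because $\alpha<2$, we have $|x|^{2-\alpha}\le (\operatorname{diam}\Omega+|x_0|)^{2-\alpha}=:M$ on the bounded set $\Omega$, for any fixed $x_0\in\Omega$. Therefore
\begin{equation*}
\int_\Omega u^2\,dx\le M\int_\Omega |x|^{\alpha-2}u^2\,dx<\infty,
\end{equation*}
using Step 2.

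The only delicate point is the justification at $x=0$, namely the vanishing of the small-sphere boundary term and the integrability of $|x|^{\alpha-2}u^2$. Both follow from $N-2+\alpha>0$. For the paper's annulus $\Omega=A_{3R,6R}$, which does not contain $0$, the issue does not even arise.
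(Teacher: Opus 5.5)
Your proof is correct and follows essentially the paper's route. For the Hardy inequality the paper only cites \cite{Wu} and \cite{Stuart}; the argument there is precisely your integration by parts against $\Div(|x|^{\alpha-2}x)=(N-2+\alpha)|x|^{\alpha-2}$ followed by Cauchy--Schwarz, which is also the computation the paper writes out for $w_\varepsilon$ in Lemma~\ref{08.16.L2}. The paper then obtains $u\in L^2(\Omega)$ exactly as in your Step 3, by bounding $|x|^{2-\alpha}$ above on the bounded set $\Omega$. Your $O(\varepsilon^{N-2+\alpha})$ flux bound on $\partial B_\varepsilon$ and your density step simply supply details the paper leaves to those references.
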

 \begin{proof}
The first part of the proof is derived from \cite[lemma 3.1]{Wu} or \cite[proposition 2.1(1)]{Stuart}.
For the second part, utilizing Lemma \ref{08.15.L1} and the condition $\alpha \in (0, 2)$, we deduce that
\begin{equation}\label{12.09.2}
\frac{N - 2 + \alpha}{2m}\|u\|_{L^2(\Omega; w)} \leq \|\nabla u\|_{L^2(\Omega; w)}   \text{ with } m := \sup_{x \in \Omega}|x| + 1
\end{equation}
and
\begin{equation}\label{08.15.10}
\frac{N - 2 + \alpha}{2m^{1 - \frac{\alpha}{2}}}\|u\|_{L^2(\Omega)} \leq \|\nabla u\|_{L^2(\Omega; w)},
\end{equation}
which is the Poincar\'{e} inequality. From this, we obtain that $u \in L^2(\Omega)$ if $u \in H_0^1(\Omega; w)$.
\end{proof}
\begin{remark}\label{08.16.R1}
If $u \in H_0^1(\Omega; w)$, from Lemma \ref{08.16.L3}, taking $k = 1, p = 2$, we have
\begin{equation}\label{08.16.8}
\|u\|_{L^2(\Omega; w)} \leq C\|\nabla u\|_{L^2(\Omega; w)},
\end{equation}
where the constant $C > 0$ is independent of $u \in H_0^1(\Omega; w)$. This is the Poincar\'{e} inequality in weighted Sobolev space (see, e.g., \cite[chapter 1 in]{Heinonen}).
In particular, the norm
\begin{equation}\label{08.19.2}
\|u\|_{H_0^1(\Omega; w)} = \left(\int_\Omega (\nabla u \cdot \nabla u)w \, dx\right)^{\frac{1}{2}}
\end{equation}
is an equivalent norm in $H_0^1(\Omega; w)$, where $\nabla u = \left(\frac{\partial u}{\partial x_1}, \cdots, \frac{\partial u}{\partial x_N}\right)$ is the gradient of $u$. Here and below, we use \eqref{08.19.2} to define the norm of $H_0^1(\Omega; w)$.
\end{remark}

The Lemma \ref{08.15.L4} following is documented  in \cite[theorem 3.4]{Wu}, or   \cite[proposition 2.1 (5)]{Stuart}.
\begin{lemma}\label{08.15.L4}
The embedding $H_0^1(\Omega; w) \hookrightarrow L^2(\Omega)$ is compact.
\end{lemma}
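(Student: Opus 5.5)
The plan is to deduce the compactness from the Hardy-type inequality of Lemma \ref{08.15.L1}, combined with Rellich--Kondrachov on regions where the weight is bounded below. Let $(u_n)$ be bounded in $H_0^1(\Omega; w)$, say $\|\nabla u_n\|_{L^2(\Omega;w)}\le M$. By Lemma \ref{08.15.L1}, $(u_n)$ is then bounded in $L^2(\Omega)$. The goal is to show that $(u_n)$ has a subsequence which is Cauchy in $L^2(\Omega)$. By density of $\mathcal D(\Omega)$ we may work with smooth $u_n$ in all estimates, and pass to limits at the end.

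\textbf{Step 1: smallness near the origin.} Fix $\e\in(0,1)$. On $B_\e\cap\Omega$ we have $|x|^{\alpha-2}\ge \e^{\alpha-2}$, because $\alpha-2<0$. Lemma \ref{08.15.L1} therefore gives
\begin{equation*}
\int_{\Omega\cap B_\e} u_n^2\,dx \le \e^{2-\alpha}\int_\Omega |x|^{\alpha-2}u_n^2\,dx \le \e^{2-\alpha}\Big(\frac{2M}{N-2+\alpha}\Big)^2 .
\end{equation*}
This bound is uniform in $n$ and tends to $0$ as $\e\to0$. Here we use $N\ge2$ and $\alpha>0$, so that $N-2+\alpha>0$.

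\textbf{Step 2: compactness away from the origin.} On $\Omega\setminus \overline{B_{\e/2}}$ the weight satisfies $w\ge c_\e>0$. Hence the truncations $\eta_\e u_n$ are bounded in the unweighted space $H_0^1(\Omega)$, where $\eta_\e$ is a smooth cutoff equal to $1$ outside $B_\e$, equal to $0$ in $B_{\e/2}$, with $|\nabla\eta_\e|\le C/\e$. The term $u_n\nabla\eta_\e$ is controlled by the $L^2(\Omega)$ bound from Step 1. Since $\Omega$ is bounded, the classical Rellich--Kondrachov theorem gives a subsequence along which $\eta_\e u_n$ converges in $L^2(\Omega)$.

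\textbf{Step 3: diagonal argument.} Take $\e=1/k$ for $k=1,2,\dots$ and extract nested subsequences, then the diagonal subsequence. Combining Steps 1 and 2 yields
\begin{equation*}
\limsup_{n,m}\|u_n-u_m\|_{L^2(\Omega)}\le C\,\e^{(2-\alpha)/2}
\end{equation*}
for every $\e$, so the diagonal subsequence is Cauchy in $L^2(\Omega)$. By completeness it converges, which proves compactness of the embedding.

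The main obstacle is Step 1: there must be a uniform tail estimate near the degenerate point $0$. The Hardy inequality supplies exactly this because its weight $|x|^{\alpha-2}$ blows up there, and this is where $\alpha<2$ is used. If $0\notin\overline\Omega$, as in the annular setting $A_{3R,6R}$, Step 1 is vacuous and the result follows directly from Step 2.
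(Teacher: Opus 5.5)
The paper gives no argument for this lemma; it simply quotes it from \cite[theorem 3.4]{Wu} and \cite[proposition 2.1 (5)]{Stuart}. Your proof is correct and self-contained, using only ingredients already present in the paper.

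Your argument has three parts. The Hardy inequality of Lemma \ref{08.15.L1} gives the uniform smallness $\int_{\Omega\cap B_\e}u_n^2\,dx\le C\e^{2-\alpha}$ near the degenerate point; this is exactly where $\alpha<2$ and $N-2+\alpha>0$ are used. Away from the origin the weight is bounded below, so Rellich--Kondrachov applies to $\eta_\e u_n$. The diagonal extraction then closes the argument.

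There are two minor bookkeeping points. First, the control of $u_n\nabla\eta_\e$ in Step 2 comes from the global $L^2$ bound \eqref{08.15.10}, not from the local estimate of Step 1. Second, the fact that $\eta_\e u_n\in H_0^1(\Omega)$ (not merely $H^1$) follows by applying the cutoff to $C_0^\infty(\Omega)$ approximants $\phi_j$ of $u_n$ in the weighted norm. Here \eqref{08.15.10} again controls $\nabla\eta_\e\,(\phi_j-u_n)$ in $L^2$, and your density remark covers this.

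Compared with the citation, your route shows that the lemma is a purely local issue at $x=0$. In the annular configuration $\Omega=A_{3R,6R}$ one has $(3R)^\alpha\le w\le(6R)^\alpha$, hence $H_0^1(\Omega;w)=H_0^1(\Omega)$ with equivalent norms. The lemma then reduces to the classical Rellich theorem, as you observe.
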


 We define
\begin{equation*}
L^2(Q; w) = \left\{\varphi \in \mathcal{D}'(Q) \colon \iint_Q \varphi^2 w \, dx \, dt < +\infty\right\},
\end{equation*}
where $\mathcal{D}'(Q)$ denotes the space of distributions on $Q$   and its inner product is given by
\begin{equation*}
(\varphi, \psi)_{L^2(Q; w)} = \iint_Q \varphi \psi w \, dx \, dt.
\end{equation*}
 Similarly, we can introduce the space
\begin{equation*}
L^2(Q; w^{-1}) = \left\{\varphi \in \mathcal{D}'(Q) \colon \iint_Q \varphi^2 w^{-1} \, dx \, dt < +\infty\right\},
\end{equation*}
with its inner product defined as
\begin{equation*}
(\varphi, \psi)_{L^2(Q; w^{-1})} = \iint_Q \varphi \psi w^{-1} \, dx \, dt.
\end{equation*}
Analogous to the case of $L^2(\Omega; w)$, the spaces $(L^2(Q; w), (\cdot, \cdot)_{L^2(Q; w)})$ and $(L^2(Q; w^{-1}), (\cdot, \cdot)_{L^2(Q; w^{-1})})$ are Hilbert spaces.
Next, we define
\begin{equation*}
L^2(0, T; H^1(\Omega; w)) = \left\{\varphi \in L^2(Q; w) \colon \frac{\partial \varphi}{\partial x_i} \in L^2(Q; w), \text{  for } i = 1, \cdots, N\right\},
\end{equation*}
with its inner product given by
\begin{equation*}
(\varphi, \psi)_{L^2(0, T; H^1(\Omega; w))} = \iint_Q \varphi \psi w \, dx \, dt + \iint_Q (\nabla \varphi \cdot \nabla \psi)w \, dx \, dt,
\end{equation*}
and its norm defined as
\begin{equation*}
\|\varphi\|_{L^2(0, T; H^1(\Omega; w))}^2 = \iint_Q \varphi^2 w \, dx \, dt + \iint_Q |\nabla \varphi|^2 w \, dx \, dt.
\end{equation*}
It follows that $(L^2(0, T; H^1(\Omega; w)), (\cdot, \cdot)_{L^2(0, T; H^1(\Omega; w))})$ is a Hilbert space, and $(L^2(0, T; H_w^1(Q)), \|\cdot\|_{L^2(0, T; H_w^1(Q))})$ is a Banach space (\cite[section 3]{FF}). The same conclusions apply to $L^2(0, T; H_0^1(\Omega; w))$.
\begin{remark}
By \eqref{08.16.8} in Remark \ref{08.16.R1}, the space $L^2(0, T; H_0^1(\Omega; w))$ admits an equivalent norm
\begin{equation*}
\|\varphi\|_{L^2(0, T; H^1(\Omega; w))}^2 = \iint_Q |\nabla \varphi|^2 w \, dx \, dt.
\end{equation*}
In the subsequent discussion, we will utilize this norm for $L^2(0, T; H^1(\Omega; w))$.
\end{remark}
Define
\begin{equation*}
W = \left\{\varphi \in L^2(0, T; H_0^1(\Omega; w)) \colon \partial_t \varphi \in L^2(0, T; H^{-1}(\Omega; w))\right\}.
\end{equation*}
It is clear that $W \subset C([0, T]; L^2(\Omega))$. Moreover, $W \hookrightarrow L^2(Q)$ is compact.
\begin{definition}\label{08.16.D1}
Let $f \in L^2(Q; w)$ and $\varphi_0 \in L^2(\Omega)$. We call $\varphi \in W$ a \emph{weak solution} of
\begin{equation}\label{03.01.1}
\begin{cases}
\partial_t \varphi - \mathrm{Div}(|x|^\alpha \nabla \varphi) = f, &\text{ in } Q, \\
\varphi = 0, &\text{ on } \partial Q, \\
\varphi(0) = \varphi_0, &\text{ in } \Omega,
\end{cases}
\end{equation}
if
\begin{equation*}
-\iint_Q \varphi \partial_t \psi \, dx \, dt + \iint_Q (\nabla \varphi \cdot \nabla \psi)w \, dx \, dt = \iint_Q f \psi \, dx \, dt + \int_\Omega \varphi_0(x)\psi(x, 0) \, dx
\end{equation*}
for any $\psi \in W$ with $\psi(T) = 0$.
\end{definition}
The following Lemma \ref{12.09.L1} is Lemma 3.7 in \cite{FF}.
\begin{lemma}\label{12.09.L1}
Let $\varphi_0 \in L^2(\Omega)$, $S = \sum_{i=1}^N \frac{\partial f_i}{\partial x_i} \in L^2(0, T; H^{-1}(\Omega; w))$, $g \in L^2(Q; w^{-1})$. If $\varphi \in L^2(0, T; H_0^1(\Omega; w))$ is a weak solution of the problem
\begin{equation*}
\begin{cases}
\partial_t \varphi - \mathrm{Div}(w \nabla \varphi) = g - S, &\text{ in } Q, \\
\varphi = 0, &\text{ on } \partial Q, \\
\varphi(0) = \varphi_0, &\text{ in } \Omega,
\end{cases}
\end{equation*}
then
\begin{equation}\label{12.09.1}
\begin{split}
&\sup_{t \in [0, T]} \int_\Omega |\varphi(x, t)|^2 \, dx + \iint_Q |\nabla \varphi(x, t)|^2 w \, dx \, dt \\
&\leq C\left(\|\varphi_0\|_{L^2(\Omega)}^2 + \|g\|_{L^2(Q; w^{-1})}^2 + \sum_{j=1}^N \|f_j\|_{L^2(\Omega; w^{-1})}^2\right),
\end{split}
\end{equation}
where the constant $C > 0$ depends only on $\alpha$, $N$, and $\Omega$.
\end{lemma}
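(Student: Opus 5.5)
We prove Lemma \ref{12.09.L1} by the standard energy method: test the equation with $\varphi$ itself on $\Omega\times(0,t)$, then absorb the right-hand side terms with weighted Cauchy--Schwarz and Young inequalities. Since $\varphi\in L^2(0,T;H_0^1(\Omega;w))$ is only a weak solution, we first establish the energy identity rigorously. We will work within the class $W$: the equation gives $\partial_t\varphi=\Div(w\nabla\varphi)+g-S$. For $\psi\in H_0^1(\Omega;w)$ we have $|\int_\Omega g\psi\,dx|\le \|g\|_{L^2(\Omega;w^{-1})}\|\psi\|_{L^2(\Omega;w)}$, which is controlled by \eqref{08.16.8}. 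Together with $S\in L^2(0,T;H^{-1}(\Omega;w))$, this gives $\partial_t\varphi\in L^2(0,T;H^{-1}(\Omega;w))$, so $\varphi\in W\subset C([0,T];L^2(\Omega))$. The Lions--Magenes type formula $\frac{d}{dt}\frac12\|\varphi(t)\|_{L^2(\Omega)}^2=\langle\partial_t\varphi,\varphi\rangle$ then holds in the Gelfand triple $H_0^1(\Omega;w)\hookrightarrow L^2(\Omega)\hookrightarrow H^{-1}(\Omega;w)$. The embedding is continuous by Lemma \ref{08.15.L1} and dense since $C_0^\infty(\Omega)$ is dense. If needed, we justify the formula by Steklov averaging in time, or by mollifying in $t$ and passing to the limit.

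Testing with $\varphi\chi_{(0,t)}$ gives
\begin{equation*}
\frac12\int_\Omega|\varphi(x,t)|^2dx+\int_0^t\!\!\int_\Omega|\nabla\varphi|^2w\,dx\,ds=\frac12\|\varphi_0\|_{L^2(\Omega)}^2+\int_0^t\!\!\int_\Omega g\varphi\,dx\,ds+\sum_{i}\int_0^t\!\!\int_\Omega f_i\,\partial_{x_i}\varphi\,dx\,ds,
\end{equation*}
where we have integrated by parts in $S$ using $\varphi(\cdot,s)\in H_0^1(\Omega;w)$; the sign convention for $S$ is immaterial.

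We estimate the two source terms separately. For the $f_i$ term we write $f_i\partial_i\varphi=(f_iw^{-1/2})(w^{1/2}\partial_i\varphi)$ and apply Young's inequality, which gives the bound $\frac14\iint|\nabla\varphi|^2w+C\sum\|f_i\|^2_{L^2(w^{-1})}$. For the $g$ term we write $g\varphi=(gw^{-1/2})(w^{1/2}\varphi)$ and use the weighted Poincar\'e inequality \eqref{12.09.2}, namely $\|\varphi\|_{L^2(\Omega;w)}\le \frac{2m}{N-2+\alpha}\|\nabla\varphi\|_{L^2(\Omega;w)}$. This gives the bound $\frac14\iint|\nabla\varphi|^2w+C\|g\|^2_{L^2(Q;w^{-1})}$, with $C$ depending only on $N,\alpha,\Omega$. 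Absorbing both gradient terms into the left side and then taking the supremum over $t\in[0,T]$ yields \eqref{12.09.1}. The constant is independent of $T$ except through the integrals themselves; we also note that the norm $\|f_j\|_{L^2(\Omega;w^{-1})}$ in the statement should be read as the space-time norm $L^2(Q;w^{-1})$.

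The main obstacle is the justification step in the first paragraph: the time-derivative identity for $\varphi\in W$ in the degenerate setting. The difficulty is checking that $C_0^\infty(\Omega)$-based approximations respect the weighted norms. We would handle this as in \cite{FF}: regularize in time by Steklov averages $\varphi_h$, apply the identity to $\varphi_h$ (which is legitimate, since $\partial_t\varphi_h\in L^2(0,T;H_0^1(\Omega;w))$), and let $h\to0$ using strong convergence in $L^2(0,T;H_0^1(\Omega;w))$.
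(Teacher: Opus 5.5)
Your proof is correct and follows the paper's argument. The paper also tests the equation with $\varphi$ on $\Omega\times(0,\tau)$, splits $g\varphi$ and $f_i\partial_{x_i}\varphi$ with weights $w^{\mp 1}$ via Young's inequality, and absorbs $\iint\varphi^2 w$ through the weighted Poincar\'e inequality \eqref{12.09.2} (it takes $\varepsilon=\frac14\min\{((N-2+\alpha)/(2m))^2,1\}$). Your additional justification of the energy identity, which the paper only calls ``readily verified,'' uses the Gelfand triple $H_0^1(\Omega;w)\hookrightarrow L^2(\Omega)\hookrightarrow H^{-1}(\Omega;w)$ with Steklov averaging. Your reading of the $f_j$-norm as the space-time norm $L^2(Q;w^{-1})$ matches what the paper's proof actually uses.
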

 \begin{proof}
The proof is straightforward. Actually, we can readily verify that
\begin{equation*}
\begin{split}
&\iint_{\Omega \times (0, \tau)} w \nabla \varphi \cdot \nabla \varphi \, dx \, dt + \frac{1}{2} \int_\Omega |\varphi(x, \tau)|^2 \, dx \\
&= \frac{1}{2} \int_\Omega |\varphi_0|^2 \, dx + \iint_{\Omega \times (0, \tau)} g \varphi \, dx \, dt + \iint_{\Omega \times (0, \tau)} \sum_{i=1}^N \frac{\partial \varphi}{\partial x_i} f_i \, dx \, dt \\
&\leq \frac{1}{2} \int_\Omega |\varphi_0|^2 \, dx + \frac{1}{2\varepsilon} \iint_{\Omega \times (0, \tau)} g^2 w^{-1} \, dx \, dt + \varepsilon \iint_{\Omega \times (0, \tau)} \varphi^2 w \, dx \, dt \\
&  + \frac{1}{2\varepsilon} \iint_{\Omega \times (0, \tau)} \sum_{i=1}^N f_i^2 w^{-1} \, dx \, dt + \varepsilon \iint_{\Omega \times (0, \tau)} |\nabla \varphi|^2 w \, dx \, dt
\end{split}
\end{equation*}
holds for all $\tau \in (0, T)$ and $\varepsilon > 0$.
By choosing $\varepsilon = \frac{1}{4} \min\left\{\left(\frac{N - 2 + \alpha}{2m}\right)^2, 1\right\}$ and using \eqref{12.09.2}, we derive \eqref{12.09.1}.
\end{proof}

From Lemma \ref{12.09.L1}, we obtain the Corollary \ref{12.11.C1} following.
 \begin{corollary}\label{12.11.C1}
Under the conditions specified in Lemma \ref{12.09.L1}, where $g = f_i = 0$ for $i = 1, \cdots, N$, the following inequality holds:
\begin{equation*}
\int_\Omega |\varphi(x, \tau)|^2 \, dx \leq \int_\Omega |\varphi(x, t)|^2 \, dx
\end{equation*}
for all $0 \leq t \leq \tau \leq T$.
\end{corollary}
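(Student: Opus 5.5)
The plan is to run the energy identity from the proof of Lemma \ref{12.09.L1} on a general subinterval $[t,\tau]$ rather than $[0,\tau]$, and observe that with $g=f_i=0$ there is no source term left to control. Formally, test the equation $\partial_t\varphi-\Div(w\nabla\varphi)=0$ with $\varphi$ itself on $\Omega\times(t,\tau)$ and integrate by parts in $x$, using $\varphi=0$ on $\partial\Omega$. This gives, for $0\le t\le\tau\le T$,
\begin{equation*}
\frac12\int_\Omega|\varphi(x,\tau)|^2\,dx+\iint_{\Omega\times(t,\tau)} w|\nabla\varphi|^2\,dx\,ds=\frac12\int_\Omega|\varphi(x,t)|^2\,dx .
\end{equation*}
The second term on the left is nonnegative, because $w=|x|^\alpha\ge 0$. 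Dropping it yields $\int_\Omega|\varphi(x,\tau)|^2\,dx\le\int_\Omega|\varphi(x,t)|^2\,dx$, which is the claim.

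The only point needing care is justifying the identity for $\varphi\in W$, since $\varphi$ itself is not smooth. I will use the standard Lions--Magenes argument in the Gelfand triple
\[
H_0^1(\Omega;w)\hookrightarrow L^2(\Omega)\hookrightarrow H^{-1}(\Omega;w).
\]
The first embedding is continuous (indeed compact) by Lemma \ref{08.15.L4}, and $\mathcal D(\Omega)$ is dense in both spaces. This gives $W\subset C([0,T];L^2(\Omega))$, as the paper already notes. It also shows that $s\mapsto\|\varphi(s)\|_{L^2(\Omega)}^2$ is absolutely continuous, with
\[
\frac{d}{ds}\|\varphi(s)\|_{L^2(\Omega)}^2=2\langle\partial_t\varphi(s),\varphi(s)\rangle_{H^{-1},H_0^1}.
\]
The weak formulation in Definition \ref{08.16.D1}, with test functions built from $\varphi$ times cutoffs in time approximating $\chi_{(t,\tau)}$, identifies $\partial_t\varphi=\Div(w\nabla\varphi)$ in $L^2(0,T;H^{-1}(\Omega;w))$. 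Hence $\langle\partial_t\varphi,\varphi\rangle=-\int_\Omega w|\nabla\varphi|^2\,dx$ for a.e.\ $s$. Integrating from $t$ to $\tau$ gives the identity above.

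Alternatively, one can simply cite the energy equality already used in the proof of Lemma \ref{12.09.L1}. Applying it with the initial time shifted to $t$ (which is allowed because the equation is autonomous and $\varphi(t)\in L^2(\Omega)$) again gives the displayed identity with zero right-hand side source terms. I expect the justification of the chain rule in the weighted triple to be the only nontrivial step, and it is routine given density of $\mathcal D(\Omega)$.
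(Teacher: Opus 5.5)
Your proposal is correct and follows the same route as the paper, which obtains the corollary directly from Lemma \ref{12.09.L1}. Concretely, that means using the energy identity in its proof with $g=f_i=0$, applied on $[t,\tau]$, and dropping the nonnegative term $\iint w|\nabla\varphi|^2$. As you rightly stress, it is this identity that is needed, not the final estimate \eqref{12.09.1}, whose constant $C$ would not give monotonicity.

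Your Lions--Magenes argument in the triple $H_0^1(\Omega;w)\hookrightarrow L^2(\Omega)\hookrightarrow H^{-1}(\Omega;w)$ supplies the rigor the paper leaves implicit. The only small slip is in identifying $\partial_t\varphi=\Div(w\nabla\varphi)$: the natural test functions for that are products $\theta(t)\phi(x)$ with $\theta\in C_c^\infty(0,T)$ and $\phi\in H_0^1(\Omega;w)$, rather than $\varphi$ times time cutoffs.
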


\section{Approximations}\label{Section 3}

 In this section, Theorem \ref{08.16.T1} stands as one of the principal results in this paper, while Corollary \ref{08.23.C1} is essential for Proposition \ref{12.23.P1}. Lemma \ref{12.17.L1} offers an approach to approximation (see, e.g., \cite{Wu1}), and Theorem \ref{08.16.T1} is founded upon this lemma.

 \begin{lemma}\label{12.17.L1}
There exists a $C^{2,1}$ function $\psi_\e: \R \to \R$ with $\e \in (0,R)$ such that
\begin{equation*}
\psi_\e(x) = |x| \text{  for } |x| \geq \e,   |x| \leq \psi_\e(x) \leq 2\e \text{  on } [-\e, \e],
\end{equation*}
and
\begin{equation}\label{12.16.1}
\psi_\e \geq \frac{\e}{4} \text{  on } \R,   |\psi_\e'| \leq C \text{  on } \R,   |\psi_\e''| \leq \frac{C}{\e}, \text{  and } |\psi_\e'''| \leq \frac{C}{\e^2} \text{  on } \R,
\end{equation}
where the constants $C > 0$ are absolute.
\end{lemma}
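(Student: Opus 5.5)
We construct $\psi_\e$ by scaling a single fixed profile. First we build one even function $\psi:\R\to\R$ of class $C^{2,1}$ with
\begin{equation*}
\psi(s)=|s| \text{ for } |s|\ge 1,\qquad |s|\le \psi(s)\le 2 \text{ on } [-1,1],\qquad \psi\ge \tfrac14 \text{ on } \R .
\end{equation*}
Then we set $\psi_\e(x)=\e\,\psi(x/\e)$. The chain rule gives
\begin{equation*}
\psi_\e'(x)=\psi'(x/\e),\qquad \psi_\e''(x)=\e^{-1}\psi''(x/\e),\qquad \psi_\e'''(x)=\e^{-2}\psi'''(x/\e)
\end{equation*}
wherever these exist. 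The scaling also carries over the pointwise properties:
\begin{itemize}
\item $\psi_\e(x)=|x|$ for $|x|\ge\e$;
\item $|x|\le \psi_\e(x)\le 2\e$ on $[-\e,\e]$;
\item $\psi_\e\ge \e/4$ on $\R$.
\end{itemize}
So \eqref{12.16.1} holds with $C=\max\{\|\psi'\|_\infty,\|\psi''\|_\infty,\|\psi'''\|_{L^\infty}\}$. This $C$ is absolute because $\psi$ is one fixed function. Since $\psi\in C^{2,1}$, $\psi'''$ exists a.e. and is bounded, which is how the third-derivative bound is read. The restriction $\e\in(0,R)$ plays no role in the construction.

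For the profile on $[-1,1]$ we take an even polynomial $p(s)=a+bs^2+cs^4+ds^6$. We require $C^2$ matching with $|s|$ at $s=1$, namely $p(1)=1$, $p'(1)=1$, $p''(1)=0$. This gives three linear conditions on four unknowns. We then choose the free parameter so that $p$ is convex and $p(0)=a\in[\tfrac14,1]$. Convexity and evenness give $p'(0)=0$, so $p$ is decreasing on $[-1,0]$ and increasing on $[0,1]$. Convexity together with $p(1)=1$, $p'(1)=1$ places the graph above the tangent line, so $p(s)\ge s=|s|$ on $[0,1]$. Monotonicity gives $p\le p(1)=1\le 2$, and also $p\ge p(0)=a\ge \tfrac14$. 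Outside $[-1,1]$ we have $\psi(s)=|s|\ge 1$.

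The glued function is $C^2$ everywhere, because $p$ and $|s|$ agree to second order at $\pm1$. Its third derivative is piecewise polynomial, hence bounded, so $\psi''$ is Lipschitz and $\psi\in C^{2,1}$.

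The main obstacle is checking that some admissible choice of coefficients really is convex with $a\ge 1/4$. We use $d$ as the free parameter and first try $d=0$, i.e. the quartic case. The conditions become
\begin{equation*}
a+b+c=1,\qquad 2b+4c=1,\qquad 2b+12c=0,
\end{equation*}
so $c=-\tfrac1{16}$, $b=\tfrac38$, $a=\tfrac{11}{16}$. Then
\begin{equation*}
p''(s)=\tfrac34-\tfrac34 s^2\ge 0 \quad\text{on } [-1,1],
\end{equation*}
so $p$ is convex there, and $a=\tfrac{11}{16}\in[\tfrac14,1]$. This choice satisfies all requirements, and the construction is complete.
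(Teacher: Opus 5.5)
Your strategy is the paper's: an even quartic on $[-\varepsilon,\varepsilon]$ that matches $|x|$ to second order at $\pm\varepsilon$. The rescaling $\psi_\varepsilon(x)=\varepsilon\psi(x/\varepsilon)$ is a clean way to get the $\varepsilon$-dependence of the derivative bounds. Your convexity and tangent-line argument also verifies the pointwise bounds, which the paper only asserts. However, the step you called the main obstacle is solved incorrectly. From $2b+4c=1$ and $2b+12c=0$ you get $8c=-1$, so $c=-\tfrac18$, hence $b=\tfrac34$ and $a=\tfrac38$. Your values $c=-\tfrac1{16}$, $b=\tfrac38$, $a=\tfrac{11}{16}$ satisfy $p(1)=1$ and $p''(1)=0$, but they give $p'(1)=2b+4c=\tfrac12\neq 1$. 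So the glued profile has a corner at $s=\pm1$ and is not even $C^1$. The tangent line at $s=1$ is then not $y=s$, so your argument for $p(s)\ge |s|$ also loses its basis. As written, the construction does not meet its own matching conditions.

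The fix is purely arithmetic. With the correct coefficients, $p(s)=\tfrac38+\tfrac34 s^2-\tfrac18 s^4$, and:
\begin{itemize}
\item $p''(s)=\tfrac32(1-s^2)\ge 0$ on $[-1,1]$;
\item $p'''(s)=-3s$;
\item $p(0)=\tfrac38\in[\tfrac14,1]$.
\end{itemize}
Every later step of your argument then goes through unchanged, with $\|\psi'\|_\infty=1$, $\|\psi''\|_\infty=\tfrac32$ and $\|\psi'''\|_{L^\infty}=3$. Moreover, $\varepsilon\,p(x/\varepsilon)=\tfrac{3\varepsilon}{8}+\tfrac{3}{4\varepsilon}x^2-\tfrac{1}{8\varepsilon^3}x^4$ is exactly the paper's formula \eqref{12.20.1}.
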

\begin{proof}
Consider the polynomial function
\begin{equation*}
\psi_\e(x) = \sum_{i=0}^4 a_i x^{4-i},
\end{equation*}
where $a_0, \ldots, a_4$ are constants to be determined. To this end, we impose the conditions
\begin{equation*}
\begin{split}
\psi_\e(-\e) = \e,   \psi_\e(\e) = \e,   \psi_\e'(-\e) = -1,   \psi_\e'(\e) = 1,   \psi_\e''(\pm \e) = 0,
\end{split}
\end{equation*}
which yield the solution
\begin{equation}\label{12.20.1}
\psi_\e(x) = \frac{3\e}{8} + \frac{3}{4\e}x^2 - \frac{1}{8\e^3}x^4,   \forall x \in [-\e,\e],   \text{ and}   \psi_\e(x) = |x|,   \forall |x| \geq \e.
\end{equation}
This is the desired function.
\end{proof}
\begin{remark}
Let $x \in \R^N$, and consider the function $\psi_\e(x) = \psi_\e(|x|)$ defined in Lemma \ref{12.17.L1}. We have
\begin{equation}\label{12.18.1}
\begin{split}
\nabla\psi_\e &= \frac{3}{2\e}x - \frac{1}{2\e^3}|x|^2x, \\
\frac{\pt^2 \psi_\e}{\pt x_i \pt x_j} &= \de_{ij}\left[\frac{3}{2\e} - \frac{1}{2\e^3}|x|^2\right] - \frac{1}{\e^3}x_ix_j, \\
\De\psi_\e &= N\left[\frac{3}{2\e} - \frac{1}{2\e^3}|x|^2\right] - \frac{1}{\e^3}|x|^2, \\
D^2\psi_\e\nabla\psi_\e &= \left(\frac{3}{2\e} - \frac{1}{2\e^3}|x|^2\right)^2x - \frac{1}{\e^3}\left(\frac{3}{2\e} - \frac{1}{2\e^3}|x|^2\right)|x|^2x,
\end{split}
\end{equation}
on $B_\e$, and
\begin{equation}\label{12.18.2}
\nabla \psi_\e = \frac{x}{|x|}, \;
\frac{\pt^2\psi_\e}{\pt x_i \pt x_j} = \de_{ij}|x|^{-1} - x_ix_j|x|^{-3},\;
\De\psi_\e = (N-1)|x|^{-1},
\end{equation}
on $\Om \setminus B_\e$.
\end{remark}
Denote
\begin{equation*}
\begin{split}
w_\e(x) = (\psi_\e(x))^\al = (\psi_\e(|x|))^\al,   x \in \Om.
\end{split}
\end{equation*}
\begin{lemma}\label{08.16.L2}
Let $z \in H_0^1(\Om; w_\e)$. Then,
\begin{equation}\label{08.20.1}
(N + \al - 2)\|w_\e^{\frac{1}{2} - \frac{1}{\al}}z\|_{L^2(\Om)} \leq 2\|z\|_{H_0^1(\Om; w_\e)}.
\end{equation}
Furthermore,
\begin{equation}\label{08.20.2}
\|z\|{L^2(\Om; w_\e)} \leq \frac{2m}{N + \al - 2}\|\nabla z\|_{L^2(\Om; w_\e)},
\end{equation}
and
\begin{equation}\label{08.31.10}
\|z\|{L^2(\Om)} \leq \frac{2m^{1 - \frac{\al}{2}}}{N + \al - 2}\|\nabla z\|_{L^2(\Om; w_\e)}.
\end{equation}
\end{lemma}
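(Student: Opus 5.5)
The plan is to prove \eqref{08.20.1} for $z\in C_0^\infty(\Om)$ by a divergence (Hardy-type) identity, and then pass to $H_0^1(\Om;w_\e)$ by density. Note that $w_\e^{\frac12-\frac1\al}=\psi_\e^{\frac{\al}{2}-1}$, so \eqref{08.20.1} reads
\begin{equation*}
(N+\al-2)\Big(\int_\Om \psi_\e^{\al-2}z^2\,dx\Big)^{1/2}\le 2\Big(\int_\Om \psi_\e^{\al}|\nabla z|^2\,dx\Big)^{1/2}.
\end{equation*}
I would use the vector field $V(x)=\psi_\e(x)^{\al-2}x$. It is $C^1$ on $\R^N$ because $\psi_\e\in C^{2,1}$ and $\psi_\e\ge \e/4>0$ by \eqref{12.16.1}. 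Integrating by parts gives
\begin{equation*}
\int_\Om z^2\,\Div V\,dx=-2\int_\Om z\,\nabla z\cdot V\,dx .
\end{equation*}

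The key step is the pointwise lower bound $\Div V\ge (N+\al-2)\psi_\e^{\al-2}$. We compute
\begin{equation*}
\Div V=N\psi_\e^{\al-2}+(\al-2)\psi_\e^{\al-3}\,\nabla\psi_\e\cdot x .
\end{equation*}
Since $\al-2<0$, it suffices to show $\nabla\psi_\e\cdot x=r\psi_\e'(r)\le\psi_\e(r)$, where $r=|x|$.
\begin{itemize}
\item For $r\ge\e$ this holds with equality.
\item For $r<\e$, formula \eqref{12.20.1} gives $\psi_\e''(r)=\frac{3}{2\e}-\frac{3r^2}{2\e^3}\ge0$, so $\psi_\e$ is convex on $[0,\e]$. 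Convexity yields $\psi_\e(0)\ge\psi_\e(r)-r\psi_\e'(r)$, and therefore $r\psi_\e'(r)\le\psi_\e(r)-\psi_\e(0)<\psi_\e(r)$.
\end{itemize}
This convexity check is the only nontrivial point, and it is where the explicit quartic from Lemma \ref{12.17.L1} is used.

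With this bound, Cauchy--Schwarz gives
\begin{equation*}
(N+\al-2)\int\psi_\e^{\al-2}z^2\le 2\Big(\int\psi_\e^{\al-2}z^2\Big)^{1/2}\Big(\int\psi_\e^{\al-2}|x|^2|\nabla z|^2\Big)^{1/2}.
\end{equation*}
Since $|x|\le\psi_\e$ (Lemma \ref{12.17.L1}) and $\al-2<0$, we have $\psi_\e^{\al-2}|x|^2\le\psi_\e^\al=w_\e$. Dividing by $(\int\psi_\e^{\al-2}z^2)^{1/2}$ gives \eqref{08.20.1} for smooth $z$. For general $z\in H_0^1(\Om;w_\e)$ we use density. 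Because $w_\e$ is bounded above and below by positive constants on $\Om$ (for fixed $\e$), the left side is continuous in that norm. Fatou's lemma would also suffice.

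For \eqref{08.20.2} and \eqref{08.31.10}, I would use $\psi_\e\le\max(|x|,2\e)\le m$ on $\Om$, which holds because $\e<R$ and $m=\sup_\Om|x|+1$. This gives
\begin{equation*}
\psi_\e^{\al}=\psi_\e^{2}\psi_\e^{\al-2}\le m^2\psi_\e^{\al-2},\qquad 1=\psi_\e^{2-\al}\psi_\e^{\al-2}\le m^{2-\al}\psi_\e^{\al-2}.
\end{equation*}
Combining each of these with \eqref{08.20.1} and taking square roots yields the constants $\frac{2m}{N+\al-2}$ and $\frac{2m^{1-\al/2}}{N+\al-2}$, respectively.
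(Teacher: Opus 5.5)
\textbf{The convexity step fails.} Your overall route is the paper's: the vector field $w_\e^{1-\frac{2}{\al}}x=\psi_\e^{\al-2}x$, the same divergence formula, and the reduction to $x\cdot\nabla\psi_\e\le\psi_\e$ on $B_\e$. These are followed by Cauchy--Schwarz with $|x|\le\psi_\e$ and the bounds coming from $\psi_\e\le m$. Your density remark also supplies a step the paper leaves implicit. However, the step you identify as the only nontrivial one is justified incorrectly. The convexity inequality $\psi_\e(0)\ge\psi_\e(r)-r\psi_\e'(r)$ rearranges to $r\psi_\e'(r)\ge\psi_\e(r)-\psi_\e(0)$, not to the reverse. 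Moreover, the inequality you wrote, $r\psi_\e'(r)\le\psi_\e(r)-\psi_\e(0)$, is false. Strict convexity on $(0,\e)$ gives the opposite strict inequality for every $r\in(0,\e]$; at $r=\e$ your claim would read $\e\le\frac{5\e}{8}$. Comparing the tangent line at $r$ with the graph at the point $0$ can only bound the intercept $\psi_\e(r)-r\psi_\e'(r)$ from above. You need to show it is nonnegative, so as written the key lower bound $\Div V\ge(N+\al-2)\psi_\e^{\al-2}$ is not established.

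\textbf{How to repair it.} The claim itself is true, and the fix is short. The paper computes directly from \eqref{12.20.1} that $\psi_\e(r)-r\psi_\e'(r)=\frac{3}{8\e^3}(\e^2-r^2)^2\ge0$. If you want to keep a convexity argument, the relevant information is the matching at the junction $r=\e$, not the value at $0$. Since $\psi_\e''\ge0$ on $[0,\e]$, $\psi_\e'$ is nondecreasing there. Hence $\psi_\e'(r)\le\psi_\e'(\e)=1$, and so $r\psi_\e'(r)\le r\le\psi_\e(r)$ by Lemma \ref{12.17.L1}. Equivalently, the intercept satisfies $\frac{d}{dr}(\psi_\e-r\psi_\e')=-r\psi_\e''\le0$ and vanishes at $r=\e$, so it is nonnegative on $[0,\e]$. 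With this correction the rest of your argument goes through as written, including the derivations of \eqref{08.20.2} and \eqref{08.31.10}.
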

 \begin{proof}
As $w_\e \in C^{2,1}(\overline{\Omega})$, we derive
\begin{equation*}
\begin{split}
2\int_{\Omega} w_\e^{1 - \frac{2}{\alpha}}z(x \cdot \nabla z) \, dx
&= \int_{\Omega} w_\e^{1 - \frac{2}{\alpha}}x \cdot \nabla z^2 \, dx \\
&= \int_{\Omega} \Div\left(w_\e^{1 - \frac{2}{\alpha}}z^2x\right) \, dx - \int_{\Omega} z^2\Div\left(w_\e^{1 - \frac{2}{\alpha}}x\right) \, dx \\
&= -(N + \alpha - 2)\int_{\Omega} z^2w_\e^{1 - \frac{2}{\alpha}} \, dx - (2 - \alpha)\int_{B_\e}z^2\psi_\e^{\alpha - 3}[\psi_\e - x \cdot \nabla\psi_\e] \, dx.
\end{split}
\end{equation*}
Consequently,
\begin{equation*}
\begin{split}
(N + \alpha - 2)\int_{\Omega} z^2w_\e^{1 - \frac{2}{\alpha}} \, dx
&\leq -2\int_{\Omega} w_\e^{1 - \frac{2}{\alpha}}z(x \cdot \nabla z) \, dx \\
&= 2\int_{\Omega} \left(w_\e^{\frac{1}{2} - \frac{1}{\alpha}}z\right)\left(w_\e^{\frac{1}{2} - \frac{1}{\alpha}}x \cdot \nabla z\right) \, dx \\
&\leq 2\left(\int_{\Omega} w_\e^{1 - \frac{2}{\alpha}}z^2 \, dx\right)^{\frac{1}{2}}\left(\int_{\Omega} w_\e^{1 - \frac{2}{\alpha}}|x|^2|\nabla z|^2 \, dx\right)^{\frac{1}{2}} \\
&\leq 2\left(\int_{\Omega} w_\e^{1 - \frac{2}{\alpha}}z^2 \, dx\right)^{\frac{1}{2}}\left(\int_{\Omega} |\nabla z|^2w_\e \, dx\right)^{\frac{1}{2}},
\end{split}
\end{equation*}
where $\alpha \in (0,2)$ and $\psi_\e - x \cdot \nabla\psi_\e = \frac{3}{8\e^3}(\e^2 - |x|^2)^2 \geq 0$ on $B_\e$. This leads to
\begin{equation*}
(N + \alpha - 2)\|w_\e^{\frac{1}{2} - \frac{1}{\alpha}}z\|_{L^2(\Omega)} \leq 2\|\nabla z\|_{L^2(\Omega; w_\e)}.
\end{equation*}
Hence, \eqref{08.20.1} holds.
Finally, by \eqref{08.20.1} and $\e^\alpha \leq w_\e \leq m^\alpha$ in $\Omega$, we obtain \eqref{08.20.2} and \eqref{08.31.10}.
\end{proof}

We introduce some \textbf{Notations.} For $k \in \N$, we denote $w_k = w_\e$ with $\e = \frac{1}{k}$.
Fix $k \in \N$. Consider the equation
\begin{equation}\label{08.16.1}
\begin{cases}
\pt_t\wh \vp_k - \Div(w_k\nabla\wh \vp_k) = f_k, &\text{ in } Q, \\
\wh \vp_k = 0, &\text{ on } \pt Q, \\
\wh \vp_k(0) = \vp_k, & \text{ in } \Om,
\end{cases}
\end{equation}
where $\vp_k \in L^2(\Om)$, and $f_k$ is a given function satisfying $f_kw_k^{-1} \in L^2(Q; w_k)$ (i.e., $f_k \in L^2(Q; w_k^{-1})$).
Denote
\begin{equation*}
W_k = \left\{\vp \in L^2(0,T; H_{0}^1(\Om; w_k)) \colon \pt_t\vp \in L^2(0,T; H^{-1}(\Om; w_k))\right\}.
\end{equation*}
Clearly, $W_k \subset C([0,T]; L^2(\Om))$ by Lemma \ref{08.15.L1}. Furthermore, $W_k \hookrightarrow L^2(Q)$ is compact.
\begin{definition}\label{08.16.D1}
We call $\vp_k \in W_k$ a solution of \eqref{08.16.1} if
\begin{equation*}
-\iint_Q\vp_k\pt_t\psi \, dx \, dt + \iint_Q\left(\nabla \vp_k \cdot \nabla \psi\right)w_k \, dx \, dt = \iint_Q f_k\psi \, dx \, dt + \int_\Om \vp_k(x)\psi(x,0) \, dx
\end{equation*}
for any $\psi \in W_k$ with $\psi(T) = 0$.
\end{definition}

  \begin{lemma}\label{08.22.L1}
Let $\varphi_k \in W_k$ ($k \in \mathbb{N}$) denote the solution of \eqref{08.16.1}, where $\varphi_0 \in L^2(\Omega)$ and $f_k w_k^{-1} \in L^2(\Omega; w_k)$. Then, the following estimate holds:
\begin{equation*}
\max_{t \in [0,T]} \|\varphi_k(t)\|_{L^2(\Omega)} + \|\varphi_k\|_{L^2(0,T; H_0^1(\Omega; w_k))} \leq C \left( \|\varphi_0\|_{L^2(\Omega)} + \|f_k w_k^{-1}\|_{L^2(\Omega; w_k)} \right),
\end{equation*}
where the positive constant $C$ depends solely on $\alpha$, $N$, and $\Omega$.
Moreover, if $f_k = 0$, then for all $0 \leq t \leq \tau \leq T$, the following inequality is satisfied:
\begin{equation*}
\|\varphi_k(\tau)\|_{L^2(\Omega)} \leq \|\varphi_k(t)\|_{L^2(\Omega)}.
\end{equation*}
\end{lemma}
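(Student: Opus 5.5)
The plan is to reproduce the energy argument of Lemma \ref{12.09.L1}, now with the smooth weight $w_k$ in place of $w$, and to use Lemma \ref{08.16.L2} in place of \eqref{12.09.2}. The point to watch is that the Poincaré constant there, $\frac{2m}{N+\alpha-2}$, does not depend on $k$. Since $\varphi_k \in W_k$ and $W_k\subset C([0,T];L^2(\Omega))$, we may test the equation \eqref{08.16.1} with $\varphi_k$ itself on $\Omega\times(0,\tau)$, approximating in $W_k$ by smooth functions if necessary. The standard Lions–Magenes identity $\langle \partial_t\varphi_k,\varphi_k\rangle = \frac12\frac{d}{dt}\|\varphi_k\|_{L^2(\Omega)}^2$ then gives, for every $\tau\in(0,T]$,
\begin{equation*}
\frac12\int_\Omega|\varphi_k(x,\tau)|^2dx+\iint_{\Omega\times(0,\tau)}|\nabla\varphi_k|^2w_k\,dx\,dt=\frac12\|\varphi_0\|_{L^2(\Omega)}^2+\iint_{\Omega\times(0,\tau)}f_k\varphi_k\,dx\,dt .
\end{equation*}

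Next we estimate the source term. Write $f_k\varphi_k=(f_kw_k^{-1})\varphi_k\,w_k$ and apply Cauchy–Schwarz in $L^2(\Omega;w_k)$ together with Young's inequality:
\begin{equation*}
\iint f_k\varphi_k\le \frac{1}{2\varepsilon}\iint |f_kw_k^{-1}|^2w_k+\frac{\varepsilon}{2}\iint\varphi_k^2w_k .
\end{equation*}
By \eqref{08.20.2}, the last term is bounded by $\frac{\varepsilon}{2}\big(\frac{2m}{N+\alpha-2}\big)^2\iint|\nabla\varphi_k|^2w_k$. We choose $\varepsilon=\frac12\big(\frac{N+\alpha-2}{2m}\big)^2$, so that this term is absorbed into the left-hand side. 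The resulting constant depends only on $\alpha$, $N$ and $m$, that is, on $\Omega$, and not on $k$. Taking the supremum over $\tau$ for the first term and $\tau=T$ for the gradient term, and then taking square roots, gives the first estimate. In this estimate, $\|f_kw_k^{-1}\|_{L^2(\Omega;w_k)}$ is to be read as the $L^2(Q;w_k)$ norm, which is consistent with the hypothesis on $f_k$ stated before \eqref{08.16.1}.

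For the monotonicity statement, take $f_k=0$ and use the same identity on $(t,\tau)$ instead of $(0,\tau)$:
\begin{equation*}
\frac12\|\varphi_k(\tau)\|^2_{L^2(\Omega)}+\int_t^\tau\!\!\int_\Omega|\nabla\varphi_k|^2w_k=\frac12\|\varphi_k(t)\|^2_{L^2(\Omega)} .
\end{equation*}
Since the gradient term is nonnegative, this yields $\|\varphi_k(\tau)\|_{L^2(\Omega)}\le\|\varphi_k(t)\|_{L^2(\Omega)}$.

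The only delicate step is justifying $\varphi_k$ as a test function and the time-derivative identity. Because $\varepsilon^\alpha\le w_k\le m^\alpha$, the space $H_0^1(\Omega;w_k)$ coincides with $H_0^1(\Omega)$ with equivalent norms (though the equivalence constants depend on $k$). Hence $W_k$ is the classical space, and the usual density argument applies verbatim. The uniformity in $k$ comes only from Lemma \ref{08.16.L2}.
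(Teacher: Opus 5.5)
Your proposal is correct and is essentially the paper's own argument: test \eqref{08.16.1} with $\varphi_k$, bound $\iint f_k\varphi_k$ by Young's inequality in $L^2(Q;w_k)$, and absorb $\iint\varphi_k^2w_k$ into the gradient term via the $k$-independent weighted Poincar\'e inequality \eqref{08.20.2}. Your choice $\varepsilon=\frac12\big(\frac{N+\alpha-2}{2m}\big)^2$, with the source measured by $\iint f_k^2w_k^{-1}$, is the right one: the paper's $\delta=\frac{N+\alpha-2}{4m}$ is missing the square needed for absorption, and it writes $f_k^2w_k$ where $f_k^2w_k^{-1}$ is meant. Your energy identity on $(t,\tau)$ also supplies the monotonicity claim, which the paper leaves implicit.
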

 \begin{proof}
Let $\varphi_k \in W_k$ denote the test function. By multiplying $\varphi_k$ to both sides of \eqref{08.16.1} and integrating over the interval $(0,t)$, we derive
\begin{equation*}
\begin{split}
&\frac{1}{2}\int_\Omega \varphi_k^2(t)\mathrm{d} x + \iint_{\Omega\times (0,t)} |\nabla\varphi_k|^2w_k\mathrm{d} x\mathrm{d} t \\
&= \iint_{\Omega\times (0,t)} f_k\varphi_k\mathrm{d} x + \frac{1}{2}\int_\Omega \varphi_0^2\mathrm{d} x \\
&\leq \delta \iint_{\Omega\times (0,t)} \varphi_k^2w_k\mathrm{d} x\mathrm{d} t + \frac{1}{2\delta}\iint_{\Omega\times (0,1)} f_k^2w_k\mathrm{d} x\mathrm{d} t + \frac{1}{2}\int_\Omega \varphi_0^2\mathrm{d} x.
\end{split}
\end{equation*}
By applying \eqref{08.20.2} from Lemma \ref{08.16.L2} and setting $\delta = \frac{N+\alpha-2}{4m}$, we obtain
\begin{equation*}
\int_\Omega \varphi_k^2(t)\mathrm{d} x + \iint_{\Omega\times (0,t)} |\nabla\varphi_k|^2w_k\mathrm{d} x\mathrm{d} t \leq \frac{2m}{N+\alpha-2}\iint_{\Omega\times (0,t)} f_k^2w_k\mathrm{d} x\mathrm{d} t + \int_\Omega \varphi_0^2\mathrm{d} x.
\end{equation*}
Defining $C = \max\left\{\frac{2m}{N+\alpha-2}, 1\right\}$, we conclude the proof of Lemma \ref{08.22.L1}.
\end{proof}
 \begin{theorem}\label{08.16.T1}
Let $\widehat{\varphi}_0 \in W$ denote the solution of equation \eqref{12.14.1} with initial data $\varphi_0 \in L^2(\Omega)$ and $fw^{-1} \in L^2(\Omega;w)$. For $k \in \mathbb{N}$, let $\widehat{\varphi}_k$ be the solution of equation \eqref{08.16.1} with initial data $\varphi_k = \varphi_0$ and source term $f_k = f$. Then, the following weak convergence results hold:
\begin{equation}\label{08.22.1}
\begin{split}
\widehat{\varphi}_k &\rightharpoonup \widehat{\varphi}_0   \text{  weakly in } L^2(0,T; H_0^1(\Omega;w)), \\
\widehat{\varphi}_k &\rightharpoonup \widehat{\varphi}_0   \text{  weakly in } L^2(Q).
\end{split}
\end{equation}
Furthermore, we have:
\begin{equation}\label{12.09.5}
\widehat{\varphi}_k(T) \rightharpoonup \widehat{\varphi}_0(T) \text{  weakly in } L^2(\Omega).
\end{equation}
\end{theorem}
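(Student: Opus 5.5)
The plan is to derive bounds independent of $k$ from Lemma \ref{08.22.L1}, extract weakly convergent subsequences, pass to the limit in the weak formulation of Definition \ref{08.16.D1} to identify the limit as $\widehat{\varphi}_0$, and then invoke uniqueness so that the whole sequence converges. (The statement refers to \eqref{12.14.1}, but with initial data $\varphi_0$ and source $f$ this means the forward problem \eqref{03.01.1}; the backward problem reduces to it by $t\mapsto T-t$.)

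\textbf{Step 1: uniform bounds.} Lemma \ref{08.22.L1} gives
\begin{equation*}
\max_{t\in[0,T]}\|\widehat{\varphi}_k(t)\|_{L^2(\Omega)}+\Big(\iint_Q|\nabla\widehat{\varphi}_k|^2w_k\,dx\,dt\Big)^{1/2}\le C\big(\|\varphi_0\|_{L^2(\Omega)}+\|fw_k^{-1}\|_{L^2(Q;w_k)}\big),
\end{equation*}
where $C$ depends only on $\alpha,N,\Omega$. The constant in Lemma \ref{08.16.L2} is explicitly independent of $\e$, which is why $C$ is too. For the right-hand side to be bounded in $k$ I need $\iint_Q f^2w_k^{-1}\le C\iint_Q f^2w^{-1}$. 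This follows from $\psi_\e(x)\ge|x|$, hence $w_k\ge w$. The same inequality converts the gradient bound, since $\iint|\nabla\widehat\varphi_k|^2w\le\iint|\nabla\widehat\varphi_k|^2w_k$. So $\{\widehat{\varphi}_k\}$ is bounded in $L^2(0,T;H_0^1(\Omega;w))$ and in $L^\infty(0,T;L^2(\Omega))$. Here I use that $H_0^1(\Omega;w_k)\subset H_0^1(\Omega;w)$, because both are closures of $C_0^\infty$ and the $w_k$-norm dominates the $w$-norm. By weak compactness I extract a subsequence with $\widehat{\varphi}_k\rightharpoonup\varphi^*$ in $L^2(0,T;H_0^1(\Omega;w))$ and in $L^2(Q)$, with the same limit since both embed in $\mathcal D'(Q)$.

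\textbf{Step 2: identify the limit.} I take test functions $\psi\in C^1([0,T];C_0^\infty(\Omega))$ with $\psi(T)=0$; these are admissible for every $k$ and dense in the admissible class of Definition \ref{08.16.D1} for $W$. In the identity
\begin{equation*}
-\iint_Q\widehat{\varphi}_k\partial_t\psi+\iint_Q w_k\nabla\widehat{\varphi}_k\cdot\nabla\psi=\iint_Q f\psi+\int_\Omega\varphi_0\psi(0),
\end{equation*}
the first term converges by weak $L^2(Q)$ convergence. I write the diffusion term as $\iint w\nabla\widehat\varphi_k\cdot\nabla\psi+\iint(w_k-w)\nabla\widehat\varphi_k\cdot\nabla\psi$. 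The first piece converges by weak convergence in the weighted space. The second is bounded by
\begin{equation*}
\|\nabla\widehat\varphi_k\|_{L^2(Q;w_k)}\Big(\iint_{B_{1/k}\times(0,T)}\frac{(w_k-w)^2}{w_k}|\nabla\psi|^2\Big)^{1/2},
\end{equation*}
which tends to $0$ because $|w_k-w|\le w_k\le(2/k)^\alpha$ on $B_{1/k}$ and $w_k=w$ outside it. Thus $\varphi^*$ satisfies the weak formulation. The equation then gives $\partial_t\varphi^*\in L^2(0,T;H^{-1}(\Omega;w))$, so $\varphi^*\in W$. By uniqueness of weak solutions, which follows from Lemma \ref{12.09.L1} applied to the difference of two solutions with zero data, $\varphi^*=\widehat{\varphi}_0$. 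Since every subsequence has a further subsequence with this same limit, the whole sequence converges, proving \eqref{08.22.1}.

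\textbf{Step 3: terminal values.} This is the main obstacle, since weak convergence in $L^2(Q)$ does not by itself control traces at $t=T$. Because $\|\widehat{\varphi}_k(T)\|_{L^2(\Omega)}$ is uniformly bounded, a subsequence has $\widehat{\varphi}_k(T)\rightharpoonup\eta$ in $L^2(\Omega)$. To identify $\eta$, I will redo Step 2 with test functions $\psi\in C^1([0,T];C_0^\infty(\Omega))$ that no longer vanish at $T$. The identity then acquires the extra term $\int_\Omega\widehat{\varphi}_k(T)\psi(T)\,dx$, which passes to $\int_\Omega\eta\,\psi(T)\,dx$. The integration-by-parts identity in $W$ gives the same formula for $\widehat{\varphi}_0$ with $\widehat{\varphi}_0(T)$ in place of $\eta$. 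Subtracting yields $\int_\Omega(\eta-\widehat{\varphi}_0(T))\psi(T)=0$ for all $\psi(T)\in C_0^\infty(\Omega)$, so $\eta=\widehat{\varphi}_0(T)$. The subsequence argument again upgrades this to the full sequence, giving \eqref{12.09.5}.
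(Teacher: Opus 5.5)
Your proposal is correct and follows the same route as the paper. The shared steps are: uniform bounds from Lemma \ref{08.22.L1} together with $w\le w_k$; weak extraction; passage to the limit in the diffusion term using $w_k=w$ off $B_{1/k}$ and the smallness of the contribution from $B_{1/k}$; identification of the limit by uniqueness; and identification of the limit of $\widehat{\varphi}_k(T)$ by testing with $\psi$ that does not vanish at $t=T$. You are more explicit than the paper on several points it leaves implicit: the single Cauchy--Schwarz bound for $(w_k-w)\nabla\widehat{\varphi}_k\cdot\nabla\psi$, checking that the limit lies in $W$ so uniqueness applies, the subsequence argument for convergence of the whole sequence, and the use of the uniform bound on $\|\widehat{\varphi}_k(T)\|_{L^2(\Omega)}$ to pass from smooth test functions to weak convergence.
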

\begin{proof}  The proof will be divided into several steps.

{\it Step 1.}  Observe that
\begin{equation}\label{12.11.1}
\begin{split}
\iint_Q (fw_k^{-1})^2w_k\,\mathrm{d} x\,\mathrm{d} t &= \iint_{Q}(fw^{-1})^2(ww_k^{-1})w\,\mathrm{d} x\,\mathrm{d} t \\
&\leq \iint_Q (fw^{-1})^2w\,\mathrm{d} x\,\mathrm{d} t,
\end{split}
\end{equation}
and
\begin{equation*}
\iint_{\Omega\times (0,t)}|\nabla\widehat{\varphi}_k|^2w\,\mathrm{d} x\,\mathrm{d} t \leq \iint_{\Omega\times (0,t)}|\nabla \widehat{\varphi}_k|^2w_k\,\mathrm{d} x\,\mathrm{d} t,
\end{equation*}
since $w\leq w_k$ in $\Omega$. By Lemma \ref{08.22.L1}, we have
\begin{equation*}
\begin{split}
\max_{t\in [0,T]}\|\widehat{\varphi}_k(t)\|_{L^2(\Omega)}+\|\widehat{\varphi}_k\|_{L^2(0,T; H_0^1(\Omega;w))} \leq C\left(\|\varphi_0\|_{L^2(\Omega)}+\|fw^{-1}\|_{L^2(\Omega; w)}\right),
\end{split}
\end{equation*}
where the constant $C>0$ depends only on $\alpha$, $N$, and $\Omega$. Consequently, there exists a subsequence of $\{\widehat{\varphi}k\}{k\in\mathbb{N}}$, still denoted by itself, and $\widetilde{\varphi}_0\in L^2(0,T; H_0^1(\Omega;w))$, such that
\begin{equation}\label{08.16.4}
\begin{split}
\widehat{\varphi}_k
&\rightharpoonup \widetilde{\varphi}_0   \text{  weakly in } L^2(0,T; H_0^1(\Omega;w)),
\end{split}
\end{equation}
and by Lemma \ref{08.15.L1}, we obtain
\begin{equation}\label{08.23.1}
\widehat{\varphi}_k
\rightharpoonup \widetilde{\varphi}_0   \text{  weakly in } L^2(Q).
\end{equation}
Furthermore, we have
\begin{equation*}
\widehat{\varphi}_k
\rightharpoonup \widetilde{\varphi}_0   \text{  weakly in } L^2(Q; w).
\end{equation*}

 {\it Step 2}. Next, we aim to prove that $\widetilde{\varphi}_0$ constitutes a weak solution of \eqref{12.14.1}.
Assume $\psi\in W$ with $\psi(T)=0$. By leveraging a density argument, we can assume $\psi\in C^\infty(\overline{Q})$ and that $\psi(\cdot, t)$ is compactly supported in $\Omega$ for any $t$. Utilizing the definitions of weak solution (refer to Definitions \ref{08.16.D1} and \ref{08.16.D1}) and equations \eqref{08.16.4} and \eqref{08.23.1}, it suffices to demonstrate
\begin{equation}\label{08.16.5}
\iint_Q(\nabla\widehat{\varphi}_k\cdot \nabla\psi)w_k\mathrm{d} x\mathrm{d} t\to \iint_Q(\nabla\widetilde{\varphi}_0\cdot\nabla\psi)w\mathrm{d} x\mathrm{d} t   \text{  as } k\to \infty.
\end{equation}
Firstly, for any $\gamma>0$, by \eqref{08.16.4}, there exists $k_0\in\mathbb{N}$ such that for all $k\geq k_0$, we obtain
\begin{equation}\label{08.16.6}
\left|\iint_Q(\nabla\widehat{\varphi}_k\cdot\nabla\psi)w\mathrm{d} x\mathrm{d} t-\iint_Q(\nabla\widetilde{\varphi}_0\cdot\nabla\psi)w\mathrm{d} x\mathrm{d} t\right|<\frac{1}{2}\gamma.
\end{equation}
Secondly, observe that $w=w_k$ on $\Omega\setminus B_{\frac{1}{k}}$, implying
\begin{equation}\label{08.16.7}
\iint_{(\Omega\setminus B_{\frac{1}{k}})\times (0,T)}(\nabla\widehat{\varphi}_k\cdot\nabla\psi)w_k\mathrm{d} x\mathrm{d} t=\iint_{(\Omega\setminus B_{\frac{1}{k}})\times (0,T)}(\nabla \widehat{\varphi}_k\cdot\nabla\psi)w\mathrm{d} x\mathrm{d} t.
\end{equation}
 Thirdly, by utilizing the same reasoning as employed in \eqref{12.11.1} from Step 1, we obtain
\begin{eqnarray}\label{08.16.9}
&&\left|\iint_{B_{\frac{1}{k}} \times (0,T)} (\nabla\widehat{\varphi}_k \cdot \nabla\psi) w_k \, dx \, dt\right| \crr
&&\leq \left(\iint_{B_{\frac{1}{k}} \times (0,T)} |\nabla\widehat{\varphi}_k|^2 w_k \, dx \, dt\right)^{\frac{1}{2}} \left(\iint_{B_{\frac{1}{k}} \times (0,T)} |\nabla\psi|^2 w_k \, dx \, dt\right)^{\frac{1}{2}} \crr
&&\leq C\sqrt{T} \left(\sup_{Q} |\nabla\psi|\right) \left(\|\varphi_0\|_{L^2(\Omega)} + \|f w^{-1}\|_{L^2(Q; w)}\right) \left(w_k(B_{\frac{1}{k}})\right)^{\frac{1}{2}} \crr
&&\leq C_\psi \sqrt{T} \left(\|\varphi_0\|_{L^2(\Omega)} + \|f w^{-1}\|_{L^2(\Omega; w)}\right) \frac{1}{k^{\frac{N+\alpha}{2}}}
\end{eqnarray}
given that
\begin{equation*}
w_k(B_{\frac{1}{k}}) = \int_{B_{\frac{1}{k}}} w_k \, dx = \int_{B_{\frac{1}{k}}} \frac{1}{k^\alpha} \, dx = C \frac{1}{k^{N+\alpha}},
\end{equation*}
where the constant $C_\psi$ depends exclusively on $\alpha, N, \Omega,$ and $\psi$.
Fourthly,
\begin{equation}\label{08.16.10}
\begin{split}
&\left|\iint_{B_{\frac{1}{k}} \times (0,T)} (\nabla\widehat{\varphi}_k \cdot \nabla\psi) w \, dx \, dt\right| \\
&\leq \left(\iint_{B_{\frac{1}{k}} \times (0,T)} |\nabla\widehat{\varphi}_k|^2 w \, dx \, dt\right)^{\frac{1}{2}} \left(\iint_{B_{\frac{1}{k}} \times (0,T)} |\nabla\psi|^2 w \, dx \, dt\right)^{\frac{1}{2}} \\
&\leq C\sqrt{T} \left(\sup_{Q} |\nabla\psi|\right) \left(\|\varphi_0\|_{L^2(\Omega)} + \|f\|_{L^2(Q; w)}\right) \left(w(B_{\frac{1}{k}})\right)^{\frac{1}{2}} \\
&\leq C_\psi \sqrt{T} \left(\|\varphi_0\|_{L^2(\Omega)} + \|f\|_{L^2(Q; w)}\right) \frac{1}{k^{\frac{N+\alpha}{2}}}
\end{split}
\end{equation}
in accordance with
\begin{equation*}
w(B_{\frac{1}{k}}) = \int_{B_{\frac{1}{k}}} w \, dx = \int_{B_{\frac{1}{k}}} |x|^\alpha \, dx = C \frac{1}{k^{N+\alpha}},
\end{equation*}
where the constant $C_\psi$ is contingent only on $\alpha, N, \Omega,$ and $\psi$.
Ultimately, since $w = w_k$ on $\Omega \setminus B_{\frac{1}{k}}$, by \eqref{08.16.6}, \eqref{08.16.7}, \eqref{08.16.9}, and \eqref{08.16.10}, we derive
\begin{eqnarray*}
&&\left|\iint_Q (\nabla\widehat{\varphi}_k \cdot \nabla\psi) w_k \, dx \, dt - \iint_Q (\nabla\widetilde{\varphi}_0 \cdot \nabla\psi) w \, dx \, dt\right| \\
&&\leq \left|\iint_Q (\nabla\widehat{\varphi}_k \cdot \nabla\psi) w_k \, dx \, dt - \iint_Q (\nabla\widehat{\varphi}_k \cdot \nabla\psi) w \, dx \, dt\right| \\
&&  + \left|\iint_Q (\nabla\widehat{\varphi}_k \cdot \nabla\psi) w \, dx \, dt - \iint_Q (\nabla\widetilde{\varphi}_0 \cdot \nabla\psi) w \, dx \, dt\right| \\
&&\leq \left|\iint_{(\Omega \setminus B_{\frac{1}{k}}) \times (0,T)} (\nabla\widehat{\varphi}_k \cdot \nabla\psi) w_k \, dx \, dt - \iint_{(\Omega \setminus B_{\frac{1}{k}}) \times (0,T)} (\nabla\widehat{\varphi}_k \cdot \nabla\psi) w \, dx \, dt\right| \\
&&  + \left|\iint_{B_{\frac{1}{k}} \times (0,T)} (\nabla\widehat{\varphi}_k \cdot \nabla\psi) w_k \, dx \, dt\right| + \left|\iint_{B_{\frac{1}{k}} \times (0,T)} (\nabla\widehat{\varphi}_k \cdot \nabla\psi) w \, dx \, dt\right| + \frac{1}{2}\gamma \\
&&\leq C_\psi \sqrt{T} \left(\|\varphi_0\|_{L^2(\Omega)} + \|f\|_{L^2(Q; w)}\right) \frac{1}{k^{\frac{N+\alpha}{2}}} + \frac{1}{2}\gamma,
\end{eqnarray*}
thus validating \eqref{08.16.5}. From this and {\it Step 1}, we have established that
\begin{equation*}
-\iint_Q \widetilde{\varphi}_0 \partial_t \psi \, dx \, dt + \iint_Q (\nabla\widetilde{\varphi}_0 \cdot \nabla\psi) w \, dx \, dt = \iint_Q f \psi \, dx \, dt + \int_\Omega \varphi_0(x) \psi(x,0) \, dx.
\end{equation*}
This confirms that $\widetilde{\varphi}_0$ is a solution of \eqref{12.14.1}.

{\it Step 4}. Given that $\widetilde{\varphi}_0$ and $\widehat{\varphi}_0$ are solutions of \eqref{12.14.1} with initial data $\varphi_0 \in L^2(\Omega)$ and $f w^{-1} \in L^2(\Omega; w)$, by the uniqueness of the solution of \eqref{12.14.1}, we conclude that $\widetilde{\varphi}_0 = \widehat{\varphi}_0$.

{\it Step 5}. Finally, let $\psi \in C^\infty(\overline{Q})$ and $\psi(\cdot, t)$ be compactly supported in $\Omega$ for any $t \in [0,T]$. Multiplying $\psi$ on both sides of \eqref{08.16.1} with $\widehat{\varphi}_k(0) = \varphi_0$ and $f_k = f$, we obtain
\begin{equation*}
\begin{split}
\int_\Omega \widehat{\varphi}_k(T) \psi(T) \, dx - \iint_Q \widehat{\varphi}_k \partial_t \psi \, dx \, dt + \iint_Q w_k \nabla \widehat{\varphi}_k \cdot \nabla\psi \, dx \, dt = \iint_Q f \psi \, dx \, dt + \int_\Omega \widehat{\varphi}_k(0) \psi(0) \, dx.
\end{split}
\end{equation*}
Multiplying $\psi$ on both sides of \eqref{12.14.1}, we obtain
\begin{equation*}
\int_\Omega \widehat{\varphi}_0(T) \psi(T) \, dx - \iint_Q \widehat{\varphi}_0 \partial_t \psi \, dx \, dt + \iint_Q w \nabla \widehat{\varphi}_0 \cdot \nabla\psi \, dx \, dt = \iint_Q f \psi \, dx \, dt + \int_\Omega \widehat{\varphi}_0(0) \psi(0) \, dx.
\end{equation*}
By \eqref{08.16.5} and $\widehat{\varphi}_0 = \widetilde{\varphi}_0$, $\widehat{\varphi}_k(0) = \widehat{\varphi}_0(0) = \varphi_0$ for all $k \in \mathbb{N}$, and $\widehat{\varphi}_k \rightharpoonup \widehat{\varphi}_0$ weakly in $L^2(Q)$, we obtain
\begin{equation*}
\int_\Omega \widehat{\varphi}_k(T) \psi(T) \, dx \to \int_\Omega \widehat{\varphi}_0(T) \psi(T) \, dx.
\end{equation*}
This indicates that $\widehat{\varphi}_k(T) \to \widehat{\varphi}_0(T)$ in $L^2(\Omega)$ since $\psi(T)$ is arbitrary.
\end{proof}
 \begin{corollary}\label{08.23.C1}
Under the hypotheses stated in Theorem \ref{08.16.T1}, and with the additional assumptions that the initial data $\varphi_0$ belongs to $H_0^1(\Omega) \cap H^2(\Omega)$ and $f_k = f$ for all $k \in \mathbb{N}$,  the following strong convergence results hold:
\begin{equation*}
\widehat{\varphi}_k \to \widehat{\varphi}_0   \quad \text{in } L^2(Q),
\end{equation*}
and
\begin{equation*}
\widehat{\varphi}_k(T) \to \widehat{\varphi}_0(T)   \quad \text{in } L^2(\Omega).
\end{equation*}
Furthermore, if $f = 0$, $\varphi_0 \in H_0^1(\Omega) \cap H^3(\Omega)$, and $\Div(w_k \nabla \varphi_0) \in H_0^1(\Omega)$, then the following additional strong convergence results hold:
\begin{equation*}
\frac{\partial \widehat{\varphi}_k}{\partial \nu} \to \frac{\partial \widehat{\varphi}_0}{\partial \nu}   \quad \text{in } L^2(\partial Q),
\end{equation*}
and
\begin{equation*}
\nabla \widehat{\varphi}_k \to \nabla \widehat{\varphi}_0   \quad \text{in } [L^2(K \times (0,T))]^N
\end{equation*}
for every measurable subset $K \subset \Omega$ satisfying $\dist(0, K) > 0$.
\end{corollary}

 \begin{proof}  The proof will be split into several steps.

 Given $\varphi_0 \in H_0^1(\Omega) \cap H^2(\Omega)$, the solution $\widehat{\varphi}_k$ of \eqref{08.16.1} satisfies (noting that $H_{w_k,0}^1 = H_0^1(\Omega)$ for each $k \in \mathbb{N}$)
\begin{equation}\label{12.25.1}
\widehat{\varphi}_k \in L^2(0,T; H^3(\Omega) \cap H_0^1(\Omega)) \cap H^1(0,T; H_0^1(\Omega))   \quad \text{and} \quad   \partial_t \widehat{\varphi}_k \in L^2(0,T; H_0^1(\Omega)).
\end{equation}
Multiplying $\partial_t \widehat{\varphi}_k$ on both sides of \eqref{08.16.1} and integrating over $\Omega \times (0,t)$, we get
\begin{equation*}
\begin{split}
\iint_{\Omega \times (0,t)} (\partial_t \widehat{\varphi}_k)^2 \, dx \, dt + \frac{1}{2} \iint_{\Omega \times (0,t)} \partial_t \left( |\nabla \widehat{\varphi}_k|^2 w_k \right) \, dx \, dt = \iint_{\Omega \times (0,t)} f \partial_t \widehat{\varphi}_k \, dx \, dt.
\end{split}
\end{equation*}
This leads to
\begin{equation*}
\begin{split}
&\iint_{\Omega \times (0,t)} |\partial_t \widehat{\varphi}_k|^2 \, dx \, dt + \frac{1}{2} \int_\Omega |\nabla \widehat{\varphi}_k(t)|^2 w_k \, dx \\
&= \iint_{\Omega \times (0,t)} f \partial_t \widehat{\varphi}_k \, dx \, dt + \frac{1}{2} \int_\Omega |\nabla \widehat{\varphi}_k(0)|^2 w_k \, dx \\
&\leq \frac{1}{2} \iint_{\Omega \times (0,t)} |\partial_t \widehat{\varphi}_k|^2 \, dx \, dt + \frac{m^\alpha}{2} \left( \iint_{\Omega \times (0,t)} (f w^{-1})^2 w \, dx \, dt + \int_\Omega |\nabla \varphi_0|^2 \, dx \right).
\end{split}
\end{equation*}
for all $t \in (0,T]$.
Consequently:
\begin{equation}\label{12.09.3}
\iint_{\Omega \times (0,t)} |\partial_t \widehat{\varphi}_k|^2 \, dx \, dt + \int_\Omega |\nabla \widehat{\varphi}_k(t)|^2 w_k \, dx \leq m^\alpha \left( \iint_{\Omega \times (0,t)} (f w^{-1})^2 w \, dx \, dt + \int_\Omega |\nabla \varphi_0|^2 \, dx \right).
\end{equation}
Thus, by \eqref{08.15.10}, we have
\begin{equation}\label{08.23.2}
\begin{split}
\|\partial_t \widehat{\varphi}_k\|_{L^2(0,T; H_w^{-1}(\Omega))}
&= \sup_{\|\psi\|_{L^2(0,T; H_{w,0}^1(\Omega))} \leq 1} \langle \partial_t \widehat{\varphi}_k, \psi \rangle_{L^2(0,T; H_w^{-1}(\Omega)), L^2(0,T; H_{w,0}^1(\Omega))} \\
&= \sup_{\|\psi\|_{L^2(0,T; H_{w,0}^1(\Omega))} \leq 1} \iint_{\Omega \times (0,t)} (\partial_t \widehat{\varphi}_k) \psi \, dx \, dt \\
&\leq C \sup_{\|\psi\|_{L^2(0,T; H_{w,0}^1(\Omega))} \leq 1} \|\partial_t \widehat{\varphi}_k\|_{L^2(\Omega \times (0,t))} \|\psi\|_{L^2(\Omega \times (0,t))} \leq C \|\partial_t \widehat{\varphi}_k\|_{L^2(\Omega \times (0,t))},
\end{split}
\end{equation}
where the constant $C > 0$ depends only on $\alpha, N$, and $\Omega$, and we utilized \eqref{08.15.10} in the last inequality. From this and Theorem \ref{08.16.T1}, we obtain that there exists a subsequence of $\{\widehat{\varphi}_k\}$, still denoted by itself, such that:
\begin{equation*}
\partial_t \widehat{\varphi}_k \rightharpoonup \partial_t \widehat{\varphi}_0     \text{ weakly in } L^2(0,T; H_w^{-1}(\Omega)).
\end{equation*}
From this, \eqref{08.23.2}, and the compact embedding $W \hookrightarrow L^2(\Omega \times (0,t))$, we have:
\begin{equation*}
\widehat{\varphi}_k \to \widehat{\varphi}_0     \text{ strongly in } L^2(\Omega \times (0,t)).
\end{equation*}

   {\it Step 2}. Next, utilizing \eqref{12.09.3} and the inequality $w \leq w_k$ for all $k \in \mathbb{N}$, we derive:
\begin{equation*}
\int_\Omega |\nabla \widehat{\varphi}_k(T)|^2 w \, dx \leq \int_\Omega |\nabla \widehat{\varphi}_k(T)|^2 w_k \, dx \leq m^\alpha \left( \iint_Q (f w^{-1})^2 w \, dx \, dt + \int_\Omega |\nabla \varphi_0|^2 \, dx \right).
\end{equation*}
This demonstrates the existence of a subsequence of $\{\widehat{\varphi}_k(T)\}$, still denoted by itself, and $\xi \in H_{w,0}^1(\Omega)$, such that $\widehat{\varphi}_k(T) \rightharpoonup \xi$ weakly in $H_{w,0}^1(\Omega)$, and $\widehat{\varphi}_k(T) \to \xi$ strongly in $L^2(\Omega)$ by Lemma \ref{08.15.L4}. This, coupled with \eqref{12.09.5}, yields $\xi = \widehat{\varphi}_0(T)$.
This concludes the proof of Corollary \ref{08.23.C1}.

{\it Step 3}. Finally, selecting $\zeta \in C^\infty(\mathbb{R}^N), 0 \leq \zeta \leq 1$ satisfying:
\begin{equation*}
\zeta = 1   \text{ on } \mathbb{R}^N \setminus B_{2R},   \zeta = 0   \text{ on } B_R,   |\nabla \zeta| \leq \frac{C}{R}   \text{ on } \mathbb{R}^N
\end{equation*}
with $\text{ dist}(K,0) \geq 2R > 0$ and $B_{4R} \subset \Omega$, we apply the trace theorem to obtain:
\begin{equation}\label{12.22.2}
\begin{split}
\iint_{\partial Q} \left( \frac{\partial \widehat{\varphi}_k}{\partial \nu} \right)^2 \, dS \, dt
&\leq C \sum_{k=0}^2 \iint_Q \left| D^k (\zeta \widehat{\varphi}_k) \right|^2 \, dx \, dt,
\end{split}
\end{equation}
where $C > 0$ is a constant dependent solely on $\Omega$. Observing that $h_k = \zeta \widehat{\varphi}_k$ solves the equation:
\begin{equation*}
\begin{cases}
\partial_t h_k - \Div(w_k \nabla h_k) = \zeta f - 2w_k \nabla \zeta \cdot \nabla \widehat{\varphi}_k - \widehat{\varphi}_k \Div(w_k \nabla \zeta), &\text{ in } (\Omega \setminus B_R) \times (0,T), \\
h_k = 0, &\text{ on } \partial (\Omega \setminus B_R) \times (0,T), \\
h_k(0) = \zeta \varphi_0, &\text{ in } \Omega \setminus B_R,
\end{cases}
\end{equation*}
which is uniformly parabolic. Given $f = 0$, $\zeta \varphi_0 \in H_0^1(\Omega \setminus B_R)$, $\Div(w_k (\nabla (\zeta \varphi_0)) \in H_0^1(\Omega \setminus B_R)$, and $\partial_t [2w_k \nabla \zeta \cdot \nabla \widehat{\varphi}_k + \widehat{\varphi}_k \Div(w_k \nabla \zeta)] \in L^2(Q)$ by \eqref{12.25.1}, we invoke the standard regularity theory for uniformly parabolic equations (see Theorem 5 in Chapter 7.1.3 in \cite{Evans}, pages 360-361) to establish the existence of a constant $C > 0$ dependent on $\alpha, T, R$, and $\Omega$, such that:
\begin{equation}\label{12.25.2}
\begin{split}
\|\partial_t h_k\|_{L^2(0,T; H^2(\Omega \setminus B_R))} + \|h_k\|_{L^2(0,T; H^3(\Omega \setminus B_R))}
\leq C \|\varphi_0\|_{H^3(\Omega)}.
\end{split}
\end{equation}
This implies that $\{\frac{\partial \widehat{\varphi}_k}{\partial \nu}\}$ is bounded in $L^2(0,T; H^1(\partial \Omega))$. By the classical Sobolev compact embedding theorem, there exists a subsequence of $\{\frac{\partial \widehat{\varphi}_k}{\partial \nu}\}$, still denoted by itself, such that:
\begin{equation*}
\frac{\partial \widehat{\varphi}_k}{\partial \nu} \to \frac{\partial \widehat{\varphi}_0}{\partial \nu}   \text{ strongly in } L^2(\partial Q).
\end{equation*}
Analogously to \eqref{08.23.2}, noting that $H_{w,0}^1(\Omega \setminus B_R) = H_0^1(\Omega \setminus B_R)$ from $R^\alpha \leq w \leq m^\alpha$ on $\Omega \setminus B_R$, we utilize \eqref{12.09.3} to obtain:
\begin{equation*}
\begin{split}
\|\partial_t \widehat{\varphi}_k\|_{L^2(0,T; L^2(\Omega \setminus B_R; w))}
&= \sup_{\|\psi\|_{L^2(0,T; L^2(\Omega \setminus B_R; w))} \leq 1} \iint_Q (\partial_t \widehat{\varphi}_k) \psi w \, dx \, dt \\
&\leq C \|\partial_t \widehat{\varphi}_k\|_{L^2(0,T; L^2(\Omega \setminus B_R; w))} \|\psi\|_{L^2(0,T; L^2(\Omega \setminus B_R; w))} \leq C \|\varphi_0\|_{H^2(\Omega \setminus B_R)}.
\end{split}
\end{equation*}
By \eqref{12.09.3}, \eqref{12.25.2}, $L^2(\Om\se B_R;w)=L^2(\Om\se B_R)$, $\partial_t h_k = \zeta \partial_t \widehat{\varphi}_k$ for each $k \in \mathbb{N}$, and the compact embedding:
\begin{equation*}
\{h \in L^2(0,T; H_0^1(\Omega \setminus B_R) \cap H^2(\Omega \setminus B_R))\colon \partial_t h \in L^2(0,T; L^2(\Omega \setminus B_R))\} \hra L^2(0,T; H_0^1(\Omega \setminus B_R)),
\end{equation*}
we deduce:
\begin{equation*}
\nabla h_k \to \nabla (\zeta \widehat{\varphi}_0)   \text{ strong in } [L^2(0,T; L^2(\Omega \setminus B_R))]^N.
\end{equation*}
Furthermore, given $\zeta = 0$ on $\Omega \setminus B_{2R}$, we conclude:
\begin{equation*}
\nabla \widehat{\varphi}_k \to \nabla \widehat{\varphi}_0   \text{ strong in } [L^2(K \times (0,T))]^N.
\end{equation*}
This completes the proof of the corollary.
\end{proof}

\section{Carleman estimates}\label{Section 4}

   In this section, we start  by conducting a series of calculations aimed at deriving Theorem \ref{12.25.T1}. It is worth noting that the computation of the Carleman estimate hinges on the selection of weight functions (as exemplified by the functions $\eta_\varepsilon\ (\varepsilon>0)$ presented below). Consequently, the Carleman estimate stated in Theorem \ref{12.21.T1} is contingent upon the parameter $\varepsilon>0$. Upon examining Theorem \ref{12.21.T1}, we observe that this Carleman estimate deviates from the standard form due to the presence of an additional term (refer to the last term in Theorem \ref{12.21.T1}). To address this, we partition the estimation of the Carleman estimate into three distinct parts, which constitutes a novel contribution of this paper. The approximate theorem \ref{12.23.T1} serves as a cornerstone for the Carleman estimate of the degenerate parabolic equation \eqref{12.14.1}. The derivation of Theorem \ref{12.25.T1} from Theorem \ref{12.23.T1} within the framework of Carleman estimates follows a standard procedure. Subsequently, we turn our attention to the following backward equation:
\begin{equation}\label{12.15.1}
\begin{cases}
\partial_t u + \Div(w_\varepsilon \nabla u) = g, &\text{ in } Q, \\
u = 0, &\text{ on } \partial Q,\\
u(T) = u_T, &\text{ in } \Omega.
\end{cases}
\end{equation}
where $u_T \in H_0^1(\Omega) \cap H^3(\Omega)$, $g \in L^2(Q; w_\varepsilon^{-1})$.
Assume $0 < \varepsilon \ll R$.
We assume $\eta_\varepsilon = \gamma(-2m^{2-\alpha} + \psi_\varepsilon^{2-\alpha})$, where the constant $\gamma > 0$ will be specified later. Note that $-2m^{2-\alpha} \leq \eta_\varepsilon \leq -m^{2-\alpha}$ on $\Omega$.
Taking
\begin{equation*}
\Theta(t) = [t(T-t)]^{-4},   \quad \xi(x,t) = \Theta(t)\eta_\varepsilon(x),
\end{equation*}
and
\begin{equation*}
u = e^{-s\xi}v \Longleftrightarrow v = e^{s\xi}u.
\end{equation*}
Then, the following properties hold for $v$:

i) $v = \frac{\partial v}{\partial x_i} = 0$ in $L^2(\Omega; w_\varepsilon)$ at $t=0$ and $t=T$;

ii) $v = 0$ on $\partial Q$.

Set
\begin{equation*}
\begin{split}
P_1v
&= \sum_{i=1}^3 P_{1i}v := v_t - 2sw_\varepsilon\nabla v \cdot \nabla \xi - sv\Div(w_\varepsilon \nabla\xi),\\
P_2v
&= \sum_{i=1}^3 P_{2i}v := \Div(w_\varepsilon\nabla v) - s\xi_t v + s^2vw_\varepsilon \nabla\xi \cdot \nabla\xi.
\end{split}
\end{equation*}
Then,
\begin{equation*}
e^{s\xi}g = P_1v + P_2v.
\end{equation*}
We have
\begin{equation*}
\begin{split}
\nabla w_\varepsilon &= \alpha\psi_\varepsilon^{\alpha-1}\nabla\psi_\varepsilon,  \quad \nabla \eta_\varepsilon = \gamma(2-\alpha)\psi_\varepsilon^{1-\alpha}\nabla\psi_\varepsilon,  \quad \nabla \xi = \gamma (2-\alpha)\Theta \psi_\varepsilon^{1-\alpha}\nabla\psi_\varepsilon.
\end{split}
\end{equation*}
and
\begin{equation}\label{12.21.2}
\begin{split}
\Div(w_\varepsilon\nabla\xi) &= \gamma(2-\alpha)\Theta (|\nabla\psi_\varepsilon|^2 + \psi_\varepsilon \Delta\psi_\varepsilon),   \quad \xi_t = \Theta'\eta_\varepsilon,   \quad \xi_{tt} = \Theta''\eta_\varepsilon.
\end{split}
\end{equation}
Now, we compute $(P_1v, P_2v)_{L^2(Q)}$ term-by-term.

{\it (i):  Compute $(P_{11}v, P_{21}v)_{L^2(Q)}$.}

Indeed, by $v_t = 0$ on $\partial Q$ and i), we have
\begin{equation*}
\begin{split}
(P_{11}v, P_{21}v)_{L^2(Q)}
&= +\iint_Q v_t\Div(w_\varepsilon\nabla v)\df x\df t = -\iint_Q w_\varepsilon \nabla v \cdot \nabla v_t\df x\df t\\
&= -\frac{1}{2}\iint_Q\partial_t (w_\varepsilon\nabla v \cdot\nabla v)\df x\df t = 0.
\end{split}
\end{equation*}

{\it (ii): Compute $(P_{12}v, P_{21}v)_{L^2(Q)}$.}

Indeed, by $v = 0$ on $\partial Q$, we have
\begin{eqnarray*}
&&(P_{12}v,P_{21}v)_{L^2(Q)}\\
&&= -2s\iint_Q (w_\varepsilon \nabla v \cdot \nabla \xi )\Div(w_\varepsilon \nabla v)\df x\df t\\
&&= -2s\iint_Q\Div\left[(w_\varepsilon \nabla v \cdot\nabla \xi)(w_\varepsilon\nabla v)\right]\df x\df t + 2s\iint_Q w_\varepsilon \nabla v \cdot \nabla (w_\varepsilon \nabla v \cdot \nabla\xi)\df x\df t\\
&&= -2\gamma(2-\alpha)s\iint_{\partial Q} \Theta \psi_\varepsilon^\alpha\left(\frac{\partial v}{\partial \nu}\right)^2(x\cdot\nu)\df S\df t\\
&&\hspace{4.5mm}+2s\iint_Q (w_\varepsilon\nabla v \cdot \nabla w_\varepsilon)(\nabla v\cdot\nabla\xi)\df x\df t + 2s\iint_Q w_\varepsilon^2\nabla v \cdot\nabla (\nabla v\cdot \nabla \xi)\df x\df t\\
&&= -\gamma(2-\alpha)s\iint_{\partial Q}\Theta \psi_\varepsilon^\alpha \left(\frac{\partial v}{\partial \nu}\right)^2(x\cdot \nu)\df S\df t - \gamma(2-\alpha)(1+\alpha)s\iint_Q\Theta \psi_\varepsilon^\alpha |\nabla v|^2|\nabla\psi_\varepsilon|^2\df x\df t\\
&&\hspace{4.5mm}+2\gamma(2-\alpha)s\iint_Q\Theta \psi_\varepsilon^\alpha (\nabla v\cdot\nabla \psi_\varepsilon)^2\df x\df t\\
&&\hspace{4.5mm}-\gamma(2-\alpha)s\iint_Q\Theta \psi_\varepsilon^\alpha|\nabla v|^2\psi_\varepsilon \Delta\psi_\varepsilon\df x\df t + 2\gamma(2-\alpha)s\iint_Q\Theta \psi_\varepsilon^{1+\alpha}(D^2\psi_\varepsilon\nabla v)\cdot \nabla v\df x\df t
\end{eqnarray*}
by
\begin{equation*}
\begin{split}
&+2s\iint_Q (w_\varepsilon \nabla v\cdot\nabla w_\varepsilon)(\nabla v\cdot \nabla \xi)\df x\df t = +2\gamma\alpha(2-\alpha)s\iint_Q \Theta \psi_\varepsilon^\alpha (\nabla v\cdot\nabla \psi_\varepsilon)^2\df x\df t,
\end{split}
\end{equation*}
and by $\nabla v\cdot \nabla (\nabla v\cdot \nabla \xi)=\frac{1}{2}\nabla \xi \cdot \nabla |\nabla v|^2+(D^2\xi \nabla v)\cdot \nabla v$ we have
\begin{eqnarray*}
&&+2s\iint_Q w_\varepsilon^2\nabla v\cdot \nabla (\nabla v\cdot \nabla \xi)\df x\df t\\
&&= +s\iint_Q w_\varepsilon^2\nabla \xi \cdot \nabla |\nabla v|^2\df x\df t + 2s\iint_Q w_\varepsilon^2(D^2\xi \nabla v)\cdot \nabla v\df x\df t\\
&&= +s\iint_Q \Div(|\nabla v|^2w_\varepsilon^2\nabla \xi)\df x\df t - s\iint_Q |\nabla v|^2\Div(w_\varepsilon^2\nabla\xi)\df x\df t\\
&&\hspace{4.5mm}+2s\iint_Q w_\varepsilon^2(D^2\xi\nabla v)\cdot \nabla v\df x\df t\\
&&= +\gamma(2-\alpha)s\iint_{\partial Q}\Theta\psi_\varepsilon^\alpha \left(\frac{\partial v}{\partial \nu}\right)^2(x\cdot \nu)\df S\df t\\
&&\hspace{4.5mm}-\gamma(2-\alpha)(1+\alpha)s\iint_Q \Theta \psi_\varepsilon^\alpha |\nabla v|^2|\nabla\psi_\varepsilon|^2\df x\df t - \gamma(2-\alpha)s\iint_Q\Theta \psi_\varepsilon^\alpha|\nabla v|^2\psi_\varepsilon \Delta\psi_\varepsilon\df x\df t\\
&&\hspace{4.5mm}+2\gamma(2-\alpha)s\iint_Q\Theta \psi_\varepsilon^{1+\alpha}(D^2\psi_\varepsilon\nabla v)\cdot \nabla v\df x\df t.
\end{eqnarray*}
where $\nu$ is the outer normal derivative of $\partial\Omega$.

{\it (iii) Compute $(P_{13}v, P_{21}v)_{L^2(Q)}$.}

Indeed, by $v = 0$ on $\partial Q$, we have
\begin{eqnarray*}
&&(P_{13}v,P_{21}v)_{L^2(Q)}\\
&&= -s\iint_Q v\Div(w_\varepsilon\nabla\xi)\Div(w_\varepsilon \nabla v)\df x\df t\\
&&= +s\iint_Q w_\varepsilon \nabla v \cdot \nabla (v\Div(w_\varepsilon\nabla \xi))\df x\df t\\
&&= +s\iint_Q (w_\varepsilon \nabla v\cdot\nabla v)\Div(w_\varepsilon\nabla \xi)\df x\df t + s\iint_Q \left\{w_\varepsilon \nabla v\cdot \nabla [\Div(w_\varepsilon\nabla \xi)]\right\}v\df x\df t\\
&&= +\gamma(2-\alpha)s\iint_Q \Theta \psi_\varepsilon^\alpha |\nabla v|^2|\nabla\psi_\varepsilon|^2\df x\df t + \gamma(2-\alpha)s\iint_Q \Theta \psi_\varepsilon^\alpha |\nabla v|^2\psi_\varepsilon\Delta\psi_\varepsilon\df x\df t\\
&&\hspace{4.5mm}+s\iint_Q \left\{w_\varepsilon \nabla v\cdot \nabla [\Div(w_\varepsilon\nabla \xi)]\right\}v\df x\df t.
\end{eqnarray*}

{\it (iv): Compute $(P_{11}v, P_{22}v)_{L^2(Q)}$.}

Indeed, by i), we have
\begin{equation*}
\begin{split}
(P_{11}v, P_{22}v)_{L^2(Q)}
&= -s\iint_Q \xi_tvv_t\df x\df t = -\frac{1}{2}s\iint_{Q}\xi_t\partial_t v^2\df x\df t\\
&= +\frac{1}{2}s\iint_Q v^2\xi_{tt}\df x\df t = +\frac{1}{2}\gamma s\iint_Q\Theta'' v^2\left[-2m^{2-\alpha}+\psi_\varepsilon^{2-\alpha}\right]\df x\df t.
\end{split}
\end{equation*}

{\it (v):  Compute $(P_{12}v, P_{22}v)_{L^2(Q)}$.}

Indeed, by $v = 0$ on $\partial Q$, we have
\begin{eqnarray*}
&&(P_{12}v, P_{22}v)_{L^2(Q)}\\
&&= +2s^2\iint_Q(w_\varepsilon\nabla v\cdot \nabla \xi)(\xi_tv)\df x\df t = - s^2\iint_Q v^2\Div(\xi_tw_\varepsilon\nabla\xi)\df x\df t\\
&&= - s^2\iint_Q v^2w_\varepsilon \nabla\xi\cdot\nabla\xi_t\df x\df t - s^2\iint_Q v^2\xi_t \Div(w_\varepsilon \nabla\xi)\df x\df t\\
&&= - s^2\iint_Q v^2\xi_t \Div(w_\varepsilon \nabla\xi)\df x\df t - \gamma^2(2-\alpha)^2s^2\iint_Q \Theta'\Theta v^2\psi_\varepsilon^{2-\alpha}|\nabla\psi_\varepsilon|^2\df x\df t.
\end{eqnarray*}

{\it Compute $(P_{13}v, P_{22}v)_{L^2(Q)}$.}

Indeed, we have
\begin{equation*}
\begin{split}
&(P_{13}v, P_{22}v)_{L^2(Q)}=+s^2\iint_Q v^2\xi_t\Div(w_\varepsilon\nabla\xi)\df x\df t.
\end{split}
\end{equation*}

{\it (vi): Compute $(P_{11}v, P_{23}v)_{L^2(Q)}$.}

Indeed, we have
\begin{equation*}
\begin{split}
(P_{11}v, P_{23}v)_{L^2(Q)}
&= +s^2\iint_Q vv_tw_\varepsilon \nabla\xi\cdot\nabla\xi\df x\df t = +\frac{1}{2}s^2\iint_Q (\partial_t v^2)w_\varepsilon \nabla\xi\cdot\nabla\xi\df x\df t\\
&= - \gamma^2(2-\alpha)^2s^2\iint_Q \Theta'\Theta v^2\psi_\varepsilon^{2-\alpha}|\nabla\psi_\varepsilon|^2\df x\df t.
\end{split}
\end{equation*}

{\it (viii):  Compute $(P_{13}v, P_{23}v)_{L^2(Q)}$.}

Indeed, we have
\begin{equation*}
\begin{split}
&(P_{13}v, P_{23}v)_{L^2(Q)}=-s^3\iint_Q v^2(w_\varepsilon\nabla\xi\cdot\nabla\xi)\Div(w_\varepsilon\nabla\xi)\df x\df t.
\end{split}
\end{equation*}

{\it (ix):  Overall}, we have
\begin{equation*}
\begin{split}
H_1(Q):\hspace{-1mm}&=(P_1v,P_{21}v)_{L^2(Q)}\\
&=-\gamma(2-\alpha)s\iint_{\partial Q}\Theta \psi_\varepsilon^\alpha \left(\frac{\partial v}{\partial \nu}\right)^2(x\cdot \nu)\df S\df t-\gamma\alpha (2-\alpha) s\iint_Q\Theta \psi_\varepsilon^\alpha |\nabla v|^2|\nabla\psi_\varepsilon|^2\df x\df t\\
&\hspace{4.5mm}+2\gamma(2-\alpha)s\iint_Q\Theta \psi_\varepsilon^\alpha (\nabla v\cdot\nabla \psi_\varepsilon)^2\df x\df t+2\gamma(2-\alpha)s\iint_Q\Theta \psi_\varepsilon^{1+\alpha}(D^2\psi_\varepsilon\nabla v)\cdot \nabla v\df x\df t\\
&\hspace{4.5mm}+s\iint_Q \left\{w_\varepsilon \nabla v\cdot \nabla [\Div(w_\varepsilon\nabla \xi)]\right\}v\df x\df t,
\end{split}
\end{equation*}
and
\begin{equation*}
\begin{split}
H_2(Q):\hspace{-1mm}&=(P_1v, P_{22}v)_{L^2(Q)}+(P_{11}v,P_{23}v)_{L^2(Q)}\\
&=+\frac{1}{2}\gamma s\iint_Q\Theta'' v^2\left[-2m^{2-\alpha}+\psi_\varepsilon^{2-\alpha}\right]\df x\df t-2\gamma^2(2-\alpha)^2s^2\iint_Q \Theta'\Theta v^2 \psi_\varepsilon^{2-\alpha}|\nabla\psi_\varepsilon|^2\df x\df t,
\end{split}
\end{equation*}
and
\begin{equation*}
\begin{split}
H_3(Q):\hspace{-1mm}
&=(P_1v, P_{23}v)_{L^2(Q)}-(P_{11}v,P_{23}v)\\
&=+\gamma^3(2-\alpha)^4s^3\iint_Q \Theta^3 v^2\psi_\varepsilon^{2-\alpha}|\nabla\psi_\varepsilon|^4\df x\df t \\
&\hspace{4.5mm}+2\gamma^3(2-\alpha)^3s^3\iint_Q\Theta^3v^2\psi_\varepsilon^{3-\alpha}(D^2\psi_\varepsilon\nabla\psi_\varepsilon)\cdot\nabla\psi_\varepsilon\df x\df t.
\end{split}
\end{equation*}

\subsection{Estimations}

  For the approximate Proposition \ref{12.23.P1}, it is necessary to conduct the estimation in several distinct parts. These parts differ from those in the classical Carleman estimate due to the presence of an approximation process, which constitutes a major contribution of this paper.

This process is further divided into multiple steps.

{\it Step 1: Analysis of the part on $Q_\varepsilon^c := (\Omega \setminus B_\varepsilon) \times (0,T)$ with $\psi_\varepsilon = |x|$.}

(i) From equation \eqref{12.18.2} and $\Div(w_\varepsilon \nabla \xi) = \gamma N(2 - \alpha) \Theta$ on $Q_\varepsilon^c$, we deduce $\nabla[\Div(w_\varepsilon \nabla \xi)] = 0$ on $Q_\varepsilon^c$. Consequently,
$$
\begin{aligned}
H_1(Q_\varepsilon^c)
&= -\gamma(2 - \alpha)s \iint_{\partial Q} \Theta |x|^\alpha \left(\frac{\partial v}{\partial \nu}\right)^2 (x \cdot \nu) \, dS \, dt - \gamma \alpha(2 - \alpha)s \iint_{Q_\varepsilon^c} \Theta |x|^\alpha |\nabla v|^2 \, dx \, dt \\
&\quad + 2\gamma (2 - \alpha)s \iint_{Q_\varepsilon^c} \Theta |x|^{\alpha - 2} (\nabla v \cdot x)^2 \, dx \, dt \\
&\quad + 2\gamma (2 - \alpha)s \iint_{Q_\varepsilon^c} \Theta |x|^\alpha |\nabla v|^2 \, dx \, dt - 2\gamma (2 - \alpha)s \iint_{Q_\varepsilon^c} \Theta |x|^\alpha (\nabla v \cdot x)^2 \, dx \, dt \\
&= -\gamma(2 - \alpha)s \iint_{\partial Q} \Theta |x|^\alpha \left(\frac{\partial v}{\partial \nu}\right)^2 (x \cdot \nu) \, dS \, dt + \gamma(2 - \alpha)^2s \iint_{Q_\varepsilon^c} \Theta |x|^\alpha |\nabla v|^2 \, dx \, dt.
\end{aligned}
$$
(ii) Given that
\begin{equation}\label{12.21.1}
|\Theta' \Theta| \leq 12T \Theta^{\frac{9}{4}} \leq \frac{C}{T^5} \Theta^3,  \;
|\Theta''| \leq (60T + 8T^2) \Theta^{\frac{3}{2}} \leq \frac{C(1 + T)}{T^{11}},
\end{equation}
where $C > 0$ is an absolute constant, we obtain
$$
\left| -2\gamma^2(2 - \alpha)^2s^2 \iint_{Q_\varepsilon^c} \Theta' \Theta v^2 |x|^{2 - \alpha} \, dx \, dt \right| \leq C\gamma^2s^2 \iint_{Q_\varepsilon^c} \Theta^3 v^2 |x|^{2 - \alpha} \, dx \, dt,
$$
and
$$
\begin{aligned}
\left| \frac{1}{2}\gamma s \iint_{Q_\varepsilon^c} \Theta'' v^2 |x|^{2 - \alpha} \, dx \, dt \right| &\leq C\gamma s \iint_{Q_\varepsilon^c} \Theta^3 v^2 |x|^{2 - \alpha} \, dx \, dt,
\end{aligned}
$$
and
\begin{eqnarray*}
&&\left| \frac{1}{2}\gamma s \iint_{Q_\varepsilon^c} \Theta'' v^2 (-2m^{2 - \alpha}) \, dx \, dt \right| \\
&&\leq C\gamma s \iint_{Q_\varepsilon^c} \Theta^{\frac{3}{2}} v^2 \, dx \, dt = Cs \iint_{Q_\varepsilon^c} \left(\gamma \Theta |x|^{1 - \frac{\alpha}{2}} v\right) \left(\Theta^{\frac{1}{2}} |x|^{\frac{\alpha}{2} - 1} v\right) \, dx \, dt \\
&&\leq C\gamma^2 s \iint_{Q_\varepsilon^c} \Theta^2 v^2 |x|^{2 - \alpha} \, dx \, dt + C s \iint_{Q_\varepsilon^c} \Theta |x|^{\alpha - 2} v \, dx \, dt \\
&&\leq C\gamma^2s \iint_{Q_\varepsilon^c} \Theta^3 v^2 |x|^{2 - \alpha} \, dx \, dt + Cs \iint_{Q} \Theta \psi_\varepsilon^\alpha |\nabla v|^2 |\nabla \psi_\varepsilon|^2 \, dx \, dt
\end{eqnarray*}
by Lemma \ref{08.16.L2} and $|\nabla \psi_\varepsilon(x)| = 1$ on $\Omega \setminus B_R$.
Thus,
\begin{eqnarray*}
H_2(Q_\varepsilon^c) &&= \frac{1}{2}\gamma s \iint_{Q_\varepsilon^c} \Theta'' v^2 [-2m^{2 - \alpha} + |x|^{2 - \alpha}] \, dx \, dt - 2\gamma^2(2 - \alpha)^2s^2 \iint_{Q_\varepsilon^c} \Theta' \Theta v^2 |x|^{2 - \alpha} \, dx \, dt \\
&&\geq -C\gamma^2 s \iint_{Q_\varepsilon^c} \Theta^3 v^2 |x|^{2 - \alpha} \, dx \, dt - Cs \iint_{Q} \Theta \psi_\varepsilon^{\alpha} |\nabla v|^2 |\nabla \psi_\varepsilon|^2 \, dx \, dt \\
&&\quad -C\gamma s \iint_{Q_\varepsilon^c} \Theta^3 v^2 |x|^{2 - \alpha} \, dx \, dt - C\gamma^2s^2 \iint_{Q_\varepsilon^c} \Theta^3 v^2 |x|^{2 - \alpha} \, dx \, dt \\
&&\geq -C\gamma^2s^2 \iint_{Q_\varepsilon^c} \Theta^3 v^2 |x|^{2 - \alpha} |\nabla \psi_\varepsilon|^4 \, dx \, dt - Cs \iint_{Q} \Theta \psi_\varepsilon^{\alpha} |\nabla v|^2 |\nabla \psi_\varepsilon|^2 \, dx \, dt.
\end{eqnarray*}
(iii) Direct computation yields
\begin{eqnarray*}
H_3(Q_\varepsilon^c)
&&= \gamma^3(2 - \alpha)^4s^3 \iint_{Q_\varepsilon^c} \Theta^3 v^2 |x|^{2 - \alpha} \, dx \, dt \\
&&\quad + 2\gamma^3(2 - \alpha)^3s^3 \iint_{Q_\varepsilon^c} \Theta^3 v^2 |x|^{2 - \alpha} \, dx \, dt - 2\gamma^3(2 - \alpha)^3s^3 \iint_{Q_\varepsilon^c} \Theta^3 v^2 |x|^{2 - \alpha} \, dx \, dt \\
&&= \gamma^3 (2 - \alpha)^4s^3 \iint_{Q_\varepsilon^c} \Theta^3 v^2 |x|^{2 - \alpha} \, dx \, dt.
\end{eqnarray*}

{\it Step 2: Analysis of  the part on $Q_\e:=B_\e\times(0,T)$ with  $\psi_\e$ is in the form \eqref{12.20.1}.}

 (i) Observe that
\begin{equation}\label{12.20.2}
\begin{split}
\psi_\varepsilon\left(\frac{3}{2\varepsilon}-\frac{1}{2\varepsilon^3}|x|^2\right)-|\nabla \psi_\varepsilon|^2=\frac{3}{16\varepsilon^6}(\varepsilon^2-|x|^2)^2(3\varepsilon^2-|x|^2)\geq 0 \quad \text{for }|x|\leq \varepsilon,
\end{split}
\end{equation}
and $\nabla\psi_\varepsilon=\left(\frac{3}{2\varepsilon}-\frac{1}{2\varepsilon^3}|x|^2\right)x$, and
\begin{equation}\label{12.20.3}
\begin{split}
\left(\frac{3}{2\varepsilon}-\frac{1}{2\varepsilon^3}|x|^2\right)^2-\frac{1}{\varepsilon^3}\psi_\varepsilon
&=\frac{3}{8\varepsilon^6}(\varepsilon^2-|x|^2)(5\varepsilon^2-|x|^2)\geq 0 \quad \text{for }|x|\leq \varepsilon.
\end{split}
\end{equation}
From
\begin{equation*}
\begin{split}
&2(D^2\psi_\varepsilon\nabla \psi_\varepsilon)+\Delta\psi_\varepsilon \nabla \psi_\varepsilon+\psi_\varepsilon\nabla (\Delta \psi_\varepsilon)\\
&=\frac{3}{8\varepsilon^6}x\left(10\varepsilon^4+5N\varepsilon^4-24\varepsilon^2|x|^2-6N\varepsilon^2|x|^2+6|x|^4+N|x|^4\right) \\
&=\nabla\psi_\varepsilon \frac{3}{4\varepsilon^3(3\varepsilon^2-|x|^2)}\left(10\varepsilon^4+5N\varepsilon^4-24\varepsilon^2|x|^2-6N\varepsilon^2|x|^2+6|x|^4+N|x|^4\right)
\end{split}
\end{equation*}
and
\begin{equation*}
-\frac{3}{\varepsilon}\leq \Xi:= \frac{3}{4\varepsilon^3(3\varepsilon^2-|x|^2)}\left(10\varepsilon^4+5N\varepsilon^4-24\varepsilon^2|x|^2-6N\varepsilon^2|x|^2+6|x|^4+N|x|^4\right)\leq \frac{5(N+2)}{4\varepsilon},
\end{equation*}
and \eqref{12.21.2}, we obtain
\begin{eqnarray*}
&&\left|s\iint_{Q_\varepsilon} \left\{w_\varepsilon \nabla v\cdot \nabla [\Div(w_\varepsilon\nabla \xi)]\right\}v\,dx\,dt\right|\\
&&=\gamma(2-\alpha)s\iint_{Q_\varepsilon}\Theta \left[\psi_\varepsilon^\alpha\nabla v\cdot \nabla \left(|\nabla \psi_\varepsilon|^2+\psi_\varepsilon\Delta\psi_\varepsilon\right)\right]v\,dx\,dt\\
&&=\gamma(2-\alpha)s\iint_{Q_\varepsilon}\Theta \psi_\varepsilon^\alpha\nabla v\cdot \left[2(D^2\psi_\varepsilon\nabla \psi_\varepsilon)+\Delta\psi_\varepsilon \nabla \psi_\varepsilon+\psi_\varepsilon\nabla (\Delta \psi_\varepsilon)\right]v\,dx\,dt\\
&&=\gamma(2-\alpha)s\iint_{Q_\varepsilon}\Theta (\psi_\varepsilon^\alpha \nabla v\cdot \nabla\psi_\varepsilon) \Xi v\,dx\,dt\\
&&\leq \frac{1}{2}\gamma (2-\alpha)^2s\iint_{Q_\varepsilon} \Theta \psi_\varepsilon^\alpha |\nabla v|^2|\nabla\psi_\varepsilon|^2\,dx\,dt+C\gamma \varepsilon^{\alpha-2}s\iint_{Q_\varepsilon} \Theta  v^2\,dx\,dt,
\end{eqnarray*}
where $C>0$ is an absolute constant.
Hence,
\begin{eqnarray*}
H_1(Q_\varepsilon)
&=&-\gamma\alpha(2-\alpha)s\iint_{Q_\varepsilon}\Theta \psi_\varepsilon^\alpha |\nabla v|^2|\nabla \psi_\varepsilon|^2\,dx\,dt+2\gamma (2-\alpha)s\iint_{Q_\varepsilon} \Theta \psi_\varepsilon^\alpha (\nabla v\cdot \nabla \psi_\varepsilon)^2\,dx\,dt\\
&&\hspace{4.5mm}+2\gamma (2-\alpha)s\iint_{Q_\varepsilon}\Theta \psi_\varepsilon^{1+\alpha}\left(\frac{3}{2\varepsilon}-\frac{1}{2\varepsilon^3}|x|^2\right)|\nabla v|^2\,dx\,dt\\
&&\hspace{4.5mm}-2\gamma(2-\alpha)s\iint_{Q_\varepsilon}\Theta \psi_\varepsilon^{1+\alpha}\frac{1}{\varepsilon^3}(\nabla v\cdot x)^2\,dx\,dt\\
&&\hspace{4.5mm}+s\iint_{Q_\varepsilon} \left\{w_\varepsilon \nabla v\cdot \nabla [\Div(w_\varepsilon\nabla \xi)]\right\}v\,dx\,dt\\
&=&\gamma(2-\alpha)^2s\iint_{Q_\varepsilon}\Theta\psi_\varepsilon^\alpha|\nabla v|^2|\nabla\psi_\varepsilon|^2\,dx\,dt\\
&&\hspace{4.5mm}+2\gamma (2-\alpha)s\iint_{Q_\varepsilon} \Theta \psi_\varepsilon^{\alpha}|\nabla v|^2\left[\psi_\varepsilon\left(\frac{3}{2\varepsilon}-\frac{1}{2\varepsilon^3}|x|^2\right)-|\nabla\psi_\varepsilon|^2\right]\,dx\,dt\\
&&\hspace{4.5mm}+2\gamma(2-\alpha)s\iint_{Q_\varepsilon}\Theta \psi_\varepsilon^\alpha (\nabla v\cdot x)^2 \left[\left(\frac{3}{2\varepsilon}-\frac{1}{2\varepsilon^3}|x|^2\right)^2-\frac{1}{\varepsilon^3}\psi_\varepsilon\right]\,dx\,dt\\
&&\hspace{4.5mm}+s\iint_{Q_\varepsilon} \left\{w_\varepsilon \nabla v\cdot \nabla [\Div(w_\varepsilon\nabla \xi)]\right\}v\,dx\,dt\\
&\geq& \frac{1}{2}\gamma (2-\alpha)^2s\iint_{Q_\varepsilon}\Theta\psi_\varepsilon^\alpha|\nabla v|^2|\nabla\psi_\varepsilon|^2\,dx\,dt-C\gamma \varepsilon^{\alpha-2}s\iint_{Q_\varepsilon}\Theta v^2\,dx\,dt.
\end{eqnarray*}

(ii) From \eqref{12.21.1}, we get
\begin{equation*}
\begin{split}
H_2(Q_\e)\geq -C\gamma s\iint_{Q_\e} \Theta ^3 v^2dxdt-C\gamma^2s^2\iint_{Q_\e}\Theta^3 \psi_\e^{2-\alpha}v^2dxdt.
\end{split}
\end{equation*}
(iii) Since
\begin{equation*}
\begin{split}
\psi_\e (D^2\psi_\e\nabla \psi_\e)\cdot\nabla \psi_\e
&=+\psi_\e\left[\left(\frac{1}{2\varepsilon}-\frac{1}{2\varepsilon^3}|x|^2\right)|\nabla\psi_\e|^2-\frac{1}{\varepsilon^3}(\nabla\psi_\e\cdot x)^2\right]\\
&=\frac{3}{64\varepsilon^{12}}|x|^2(\varepsilon^2-|x|^2)(3\varepsilon^2-|x|^2)^2(3\varepsilon^4+6\varepsilon^2|x|^2-|x|^4)\geq 0 \text{  for } |x|\leq \varepsilon,
\end{split}
\end{equation*}
we have
\begin{equation*}
\begin{split}
H_3(Q_\e) \geq + \gamma^3(2-\alpha)^4s^3\iint_{Q_\e}\Theta^3v^2\psi_\e^{2-\alpha}|\nabla\psi_\e|^4dxdt.
\end{split}
\end{equation*}

 {\it Step 3.} Given $|\nabla\psi_\varepsilon(x)|=1$ on $\Omega\setminus B_R$, $(P_1v,P_2v)_{L^2(Q)}\leq \frac{1}{2}\|e^{s\xi}g\|_{L^2(Q)}$, along with the results from Step 1 and Step 2, we derive
\begin{eqnarray*}
&&+\frac{1}{2}\gamma(2-\alpha)^2s\iint_Q \Theta \psi_\varepsilon^\alpha |\nabla v|^2|\nabla \psi_\varepsilon|^2dxdt\\
&&+\gamma^3(2-\alpha)^4s^3\iint_{Q}\Theta^3v^2\psi_\varepsilon^{2-\alpha}|\nabla\psi_\varepsilon|^4dxdt\\
&&-C\gamma^2s^2\iint_{Q_\varepsilon^c}\Theta ^3v^2\psi_\varepsilon^{2-\alpha}|\nabla\psi_\varepsilon|^4dxdt-Cs\iint_Q\Theta \psi_\varepsilon^\alpha |\nabla v|^2|\nabla \psi_\varepsilon|^2dxdt\\
&&\leq +\frac{1}{2}\iint_Q e^{2s\xi}g^2dxdt+\gamma(2-\alpha)s\iint_{\partial Q}\Theta \psi_\varepsilon^\alpha \left(\frac{\partial v}{\partial \nu}\right)^2(x\cdot \nu)dSdt\\
&&\hspace{4.5mm}+C\gamma \varepsilon^{\alpha-2}s\iint_{Q_\varepsilon}\Theta v^2dxdt+C\gamma s\iint_{Q_\varepsilon} \Theta^3v^2dxdt+C\gamma^2s^2\iint_{Q_\varepsilon}\Theta^3 \psi_\varepsilon^{2-\alpha}v^2dxdt,
\end{eqnarray*}
By choosing $\gamma\geq 1$ sufficiently large and $s\geq 1$ such that
\begin{equation*}
\frac{1}{2}\gamma (2-\alpha)^2>C+1,   \gamma (2-\alpha)^4>C+1,
\end{equation*}
we obtain
\begin{equation*}
\begin{split}
&+s\iint_Q \Theta \psi_\varepsilon^\alpha |\nabla v|^2|\nabla \psi_\varepsilon|^2dxdt+s^3\iint_{Q}\Theta^3v^2\psi_\varepsilon^{2-\alpha}|\nabla\psi_\varepsilon|^4dxdt\\
&\leq +C\iint_Q e^{2s\xi}g^2dxdt+C s\iint_{\partial Q}\Theta \psi_\varepsilon^\alpha \left(\frac{\partial v}{\partial \nu}\right)^2(x\cdot \nu)dSdt \\
&\hspace{4.5mm}+Cs^2\iint_{Q_\varepsilon}\Theta^3v^2dxdt+C \varepsilon^{\alpha-2}s\iint_{Q_\varepsilon}\Theta v^2dxdt
\end{split}
\end{equation*}
in light of $|\nabla\psi_\varepsilon(x)|=1$ on $\Omega\setminus B_R$, where the constants $C>0$ are absolute.
Now, consider the transformation $u = e^{-s\xi}v$, which leads to
\begin{equation*}
\begin{split}
\nabla u &= e^{-s\xi}\nabla v + (-s\nabla \xi)ve^{-s\xi} \\
&= e^{-s\xi}\left[\nabla v - \gamma(2-\alpha)s\Theta \psi_\varepsilon^{1-\alpha} v\nabla \psi_\varepsilon\right],
\end{split}
\end{equation*}
and thus
\begin{eqnarray*}
&&+s\iint_Q\Theta \psi_\varepsilon^\alpha |\nabla u|^2|\nabla \psi_\varepsilon|^2 e^{2s\xi}\,dx\,dt + s^3\iint_Q \Theta^3 \psi_\varepsilon^{2-\alpha}u^2|\nabla\psi_\varepsilon|^4e^{2s\xi}\,dx\,dt \\
&&= +s\iint_Q\Theta\psi_\varepsilon^\alpha \left|\nabla v - \gamma(2-\alpha)s\Theta \psi_\varepsilon^{1-\alpha}v\nabla\psi_\varepsilon\right|^2|\nabla\psi_\varepsilon|^2\,dx\,dt + s^3\iint_Q\Theta^3\psi_\varepsilon^{2-\alpha}v^2|\nabla\psi_\varepsilon|^4\,dx\,dt \\
&&\leq +s\iint_Q\Theta \psi_\varepsilon^\alpha |\nabla v|^2|\nabla \psi_\varepsilon|^2\,dx\,dt + Cs^3\iint_Q \Theta^3 \psi_\varepsilon^{2-\alpha}v^2|\nabla \psi_\varepsilon|^4\,dx\,dt \\
&&\leq +C\iint_Q e^{2s\xi}g^2\,dx\,dt + Cs\iint_{\partial Q}\Theta \psi_\varepsilon^\alpha \left(\frac{\partial v}{\partial \nu}\right)^2(x\cdot\nu)\,dS\,dt \\
&&\hspace{4.5mm} + Cs^2\iint_{Q_\varepsilon}\Theta^3 v^2\,dx\,dt + C\varepsilon^{\alpha-2}s^2\iint_{Q_\varepsilon}\Theta v^2\,dx\,dt \\
&&= +C\iint_Q e^{2s\xi}g^2\,dx\,dt + Cs\iint_{\partial Q}\Theta \psi_\varepsilon^\alpha \left(\frac{\partial u}{\partial \nu}\right)^2(x\cdot\nu)e^{2s\xi}\,dS\,dt \\
&&\hspace{4.5mm} + Cs^2\iint_{Q_\varepsilon}\Theta^3 u^2e^{2s\xi}\,dx\,dt + C\varepsilon^{\alpha-2}s^2\iint_{Q_\varepsilon}\Theta u^2e^{2s\xi}\,dx\,dt,
\end{eqnarray*}
owing to the fact that
\begin{equation*}
\nabla v = \nabla (e^{s\xi}u) = e^{s\xi}\left[\nabla u + su\nabla \xi\right] = e^{s\xi}\nabla u \text{  on } \partial Q.
\end{equation*}
\begin{theorem}\label{12.21.T1}
Let $T > 0$ and $\varepsilon \in (0, \frac{R}{2})$. Then, there exist constants $C > 0$, independent of $\varepsilon > 0$, such that for every solution $u$ of \eqref{12.15.1},  and all  $s \geq 1$, there holds
\begin{equation*}
\begin{split}
&+s\iint_Q\Theta \psi_\varepsilon^\alpha |\nabla u|^2|\nabla \psi_\varepsilon|^2 e^{2s\xi}\,dx\,dt + s^3\iint_Q \Theta^3 \psi_\varepsilon^{2-\alpha}u^2|\nabla\psi_\varepsilon|^4e^{2s\xi}\,dx\,dt \\
&\leq +C\iint_Q e^{2s\xi}g^2\,dx\,dt + Cs\iint_{\partial Q}\Theta \psi_\varepsilon^\alpha \left(\frac{\partial u}{\partial \nu}\right)^2(x\cdot\nu)e^{2s\xi}\,dS\,dt \\
&\hspace{4.5mm} + Cs^2\iint_{Q_\varepsilon}\Theta^3 u^2 e^{2s\xi}\,dx\,dt + C\varepsilon^{\alpha-2}s^2\iint_{Q_\varepsilon}\Theta u^2e^{2s\xi}\,dx\,dt.
\end{split}
\end{equation*}
\end{theorem}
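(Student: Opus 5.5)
The plan is to follow the computation carried out just before the statement: conjugate, expand the cross product, estimate it region by region, absorb, and undo the conjugation. We set $v=e^{s\xi}u$ with $\xi=\Theta\eta_\varepsilon$, $\eta_\varepsilon=\gamma(-2m^{2-\alpha}+\psi_\varepsilon^{2-\alpha})$, so that $e^{s\xi}g=P_1v+P_2v$. Then
\begin{equation*}
2(P_1v,P_2v)_{L^2(Q)}\le \|e^{s\xi}g\|^2_{L^2(Q)} .
\end{equation*}
Since $\Theta\to\infty$ at $t=0,T$ and $\eta_\varepsilon\le -m^{2-\alpha}<0$, $v$ and its derivatives vanish at the time endpoints. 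This legitimizes all time integrations by parts, given the regularity \eqref{12.25.1} for $u_T\in H_0^1(\Omega)\cap H^3(\Omega)$. The cross product splits into $H_1(Q)+H_2(Q)+H_3(Q)$ as computed. A fixed $\gamma\ge1$ is chosen once (depending only on $\alpha,N$), and all constants must be independent of $\varepsilon$. This independence is the crucial point, since the lemma on $\psi_\varepsilon$ gives only $\varepsilon$-dependent bounds on higher derivatives.

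\emph{Step 1: the region $Q_\varepsilon^c=(\Omega\setminus B_\varepsilon)\times(0,T)$.} Here $\psi_\varepsilon=|x|$, so $\Div(w_\varepsilon\nabla\xi)$ is constant in $x$ and the troublesome term in $H_1$ vanishes. The Hessian identities \eqref{12.18.2} collapse $H_1(Q_\varepsilon^c)$ to the boundary term plus $\gamma(2-\alpha)^2 s\iint\Theta|x|^\alpha|\nabla v|^2$. Similarly $H_3(Q_\varepsilon^c)=\gamma^3(2-\alpha)^4 s^3\iint\Theta^3|x|^{2-\alpha}v^2$. In $H_2$ we use \eqref{12.21.1} to bound $\Theta'\Theta$ and $\Theta''$ by powers of $\Theta$. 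The term containing the constant $m^{2-\alpha}$ is handled by Cauchy--Schwarz together with the Hardy-type inequality of Lemma \ref{08.16.L2}, which converts $\iint\Theta|x|^{\alpha-2}v^2$ into a gradient term.

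\emph{Step 2: the ball $Q_\varepsilon$.} Here we use the explicit quartic \eqref{12.20.1} and the positivity identities \eqref{12.20.2} and \eqref{12.20.3}. These show that the Hessian contributions in $H_1$ and $H_3$ are nonnegative. The term $w_\varepsilon\nabla v\cdot\nabla[\Div(w_\varepsilon\nabla\xi)]\,v$ is written as $\Theta\,\psi_\varepsilon^\alpha(\nabla v\cdot\nabla\psi_\varepsilon)\,\Xi\, v$ with $|\Xi|\le C/\varepsilon$. Young's inequality then absorbs half of it into the gradient term, leaving the residual $C\gamma\varepsilon^{\alpha-2}s\iint_{Q_\varepsilon}\Theta v^2$. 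The $H_2$ terms on $Q_\varepsilon$ cannot be absorbed, because $|\nabla\psi_\varepsilon|$ degenerates at $0$. They are therefore kept on the right-hand side as $Cs^2\iint_{Q_\varepsilon}\Theta^3v^2$, using $\psi_\varepsilon\le 2\varepsilon$ and $s,\gamma\ge1$.

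\emph{Step 3: combine, absorb, and return to $u$.} Adding the two regions, we take $\gamma$ large so that $\tfrac12\gamma(2-\alpha)^2$ and $\gamma(2-\alpha)^4$ exceed the absorbing constants. We also take $s$ large so that $s^3$ dominates $s^2$ on $Q_\varepsilon^c$, where $|\nabla\psi_\varepsilon|=1$; for the remaining range $s\ge1$ we enlarge the constant. Finally we return to $u$. For the gradient we write $e^{s\xi}\nabla u=\nabla v-s v\nabla\xi$ and bound
\begin{equation*}
|e^{s\xi}\nabla u|^2\le 2|\nabla v|^2+2s^2|\nabla\xi|^2v^2 ;
\end{equation*}
the weighted $|\nabla\xi|^2$ term is controlled by the $s^3$ term. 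On $\partial Q$ we have $u=0$, so $\partial_\nu v=e^{s\xi}\partial_\nu u$ and the boundary integral transforms directly. The weights $e^{2s\xi}$ then reappear on the $Q_\varepsilon$ residuals.

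\emph{Main obstacle.} The delicate step is Step 2: verifying the sign identities and the bound on $\Xi$ for the quartic $\psi_\varepsilon$. It is precisely these that make the bad terms confined to $B_\varepsilon$ with explicit $\varepsilon$-powers, rather than spoiling $\varepsilon$-uniformity. The $m^{2-\alpha}\Theta''$ term in Step 1 also needs care: it requires the Hardy inequality with constant independent of $\varepsilon$, which Lemma \ref{08.16.L2} supplies.
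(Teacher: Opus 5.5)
Your outline follows the paper's proof step for step: the same conjugation $v=e^{s\xi}u$, the same split of $(P_1v,P_2v)_{L^2(Q)}$ into $H_1,H_2,H_3$, the same treatment of $Q_\varepsilon^c$, and of $Q_\varepsilon$ via \eqref{12.20.2}, \eqref{12.20.3} and the bound on $\Xi$, and the same choice of $\gamma$ large and return to $u$. However, one absorption does not close as you describe it, and the paper's Step 1(ii) contains the same slip. Applied to $v(\cdot,t)$, the Hardy inequality \eqref{08.20.1} gives
\begin{equation*}
\int_{\Omega}\psi_\varepsilon^{\alpha-2}v^2\,dx\le \frac{4}{(N+\alpha-2)^2}\int_\Omega\psi_\varepsilon^{\alpha}|\nabla v|^2\,dx .
\end{equation*}
Here the gradient is integrated over all of $\Omega$, including $B_\varepsilon$, and carries no factor $|\nabla\psi_\varepsilon|^2$. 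The gradient term you keep on the left is weighted by $|\nabla\psi_\varepsilon|^2$, and on $B_\varepsilon$ one has $|\nabla\psi_\varepsilon(x)|=|x|\bigl(\frac{3}{2\varepsilon}-\frac{|x|^2}{2\varepsilon^3}\bigr)\to 0$ as $x\to 0$. So the piece $Cs\iint_{Q_\varepsilon}\Theta\psi_\varepsilon^\alpha|\nabla v|^2\,dx\,dt$ produced by Hardy cannot be absorbed, however large $\gamma$ is. The $\varepsilon$-uniformity of the Hardy constant, which you single out, is not the problem. The paper's appeal to $|\nabla\psi_\varepsilon|=1$ only covers the region outside $B_\varepsilon$.

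The repair is to keep the nonnegative terms on $B_\varepsilon$ rather than discard them. With $a:=\frac{3}{2\varepsilon}-\frac{|x|^2}{2\varepsilon^3}$, the gradient part of $H_1(Q_\varepsilon)$ is
\begin{equation*}
\gamma(2-\alpha)s\iint_{Q_\varepsilon}\Theta\psi_\varepsilon^\alpha\Bigl[(2-\alpha)|\nabla v|^2|\nabla\psi_\varepsilon|^2+2|\nabla v|^2\bigl(\psi_\varepsilon a-|\nabla\psi_\varepsilon|^2\bigr)+2(\nabla v\cdot x)^2\Bigl(a^2-\frac{\psi_\varepsilon}{\varepsilon^3}\Bigr)\Bigr]\,dx\,dt .
\end{equation*}
After spending half of the first summand on the $\Xi$-term, what remains is at least $\frac12\gamma(2-\alpha)^2s\iint_{Q_\varepsilon}\Theta\psi_\varepsilon^{\alpha}\,(\psi_\varepsilon a)\,|\nabla v|^2\,dx\,dt$, because $\psi_\varepsilon a-|\nabla\psi_\varepsilon|^2\ge0$ by \eqref{12.20.2}. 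On $B_\varepsilon$ you have $\psi_\varepsilon\ge\frac{3\varepsilon}{8}$ and $a\ge\frac1\varepsilon$, so $\psi_\varepsilon a\ge\frac38$, and also $\psi_\varepsilon a\ge|\nabla\psi_\varepsilon|^2$. Add $\gamma(2-\alpha)^2s\iint_{Q_\varepsilon^c}\Theta|x|^\alpha|\nabla v|^2\,dx\,dt$ from Step 1 and split the result in two. The left side then controls both $\frac{3}{32}\gamma(2-\alpha)^2s\iint_Q\Theta\psi_\varepsilon^\alpha|\nabla v|^2\,dx\,dt$ and the $|\nabla\psi_\varepsilon|^2$-weighted gradient term in the statement, with $\varepsilon$-independent constants. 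If you arrange the Cauchy--Schwarz split as the paper does, the Hardy term carries a coefficient independent of $\gamma$, so it is now absorbed for $\gamma$ large.

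Two smaller points. First, drop the remark that you take $s$ large and then enlarge the constant for the remaining $s\ge1$. The estimate for $s\ge s_0$ does not yield the one for $s\in[1,s_0)$ that way, since $e^{2s\xi}/e^{2s_0\xi}\to\infty$ as $t\to0,T$. It is also unnecessary: with $\gamma$ large, every absorption ($C\gamma^2s^2$ against $\gamma^3(2-\alpha)^4s^3$, and $Cs$ against $c\gamma s$) already holds for all $s\ge1$, as in the paper. Second, $\gamma$ must depend on $T$ and $m$ through \eqref{12.21.1}, not only on $\alpha$ and $N$. This is harmless, since only independence of $\varepsilon$ is claimed.
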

\begin{remark}
We note that $(x\cdot \nu)$ may assume negative values on $\partial Q$.
\end{remark}

 \subsection{Approximation}

 \begin{lemma}\label{01.18.L1}
Let $0 < \varepsilon \ll 1$. Then the function $w_\varepsilon$ is an $A_{1+\frac{2}{N}}$ weight. Moreover, there exists a constant $C > 0$, independent of $\varepsilon > 0$, such that
\begin{equation*}
c\left(w_\varepsilon, 1+\frac{1}{N}\right) \leq C,
\end{equation*}
where $c(w_\varepsilon, 1+\frac{1}{N})$ denotes the $A_{1+\frac{1}{N}}$ constant of $w_\varepsilon$.
\end{lemma}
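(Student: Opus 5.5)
The plan is to reduce the uniform $A_p$ bound for $w_\varepsilon$ to the $A_p$ property of a single fixed weight, using comparability, dilation invariance, and an elementary cube computation. Throughout, $p=1+\frac{2}{N}$, so $p-1=\frac{2}{N}$ and $\frac{\alpha}{p-1}=\frac{N\alpha}{2}<N$ because $\alpha\in(0,2)$. The exponent $1+\frac1N$ in the displayed constant would need $\alpha<1$ by the criterion $-N<\alpha<N(p-1)$ recalled in Section \ref{Section 2}. So I read the statement as concerning the $A_{1+\frac{2}{N}}$ constant; the same argument gives $A_{1+\frac1N}$ with a uniform constant when $\alpha<1$.

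\textbf{Step 1: comparability.} By Lemma \ref{12.17.L1} and \eqref{12.20.1}, $\psi_\varepsilon(x)=|x|$ for $|x|\ge\varepsilon$. On $B_\varepsilon$ we have $\frac{3\varepsilon}{8}\le\psi_\varepsilon\le 2\varepsilon$. Hence
\begin{equation*}
\tfrac{3}{8}\max\{|x|,\varepsilon\}\le \psi_\varepsilon(x)\le 2\max\{|x|,\varepsilon\}\quad\text{for all }x,
\end{equation*}
and therefore $c_1 v_\varepsilon\le w_\varepsilon\le c_2 v_\varepsilon$, where $v_\varepsilon(x):=\max\{|x|,\varepsilon\}^\alpha$ and $c_1,c_2$ depend only on $\alpha$. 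Directly from \eqref{04.24.1}, if two weights are comparable with constants $c_1,c_2$, their $A_p$ constants differ by at most the factor $c_2/c_1$. So it suffices to bound the $A_p$ constant of $v_\varepsilon$ uniformly in $\varepsilon$.

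\textbf{Step 2: scaling.} We have $v_\varepsilon(\varepsilon y)=\varepsilon^\alpha v_1(y)$. The left side of \eqref{04.24.1} is unchanged when the weight is multiplied by a positive constant, and also unchanged under the dilation $x=\varepsilon y$, which maps cubes to cubes. Hence $C(v_\varepsilon,p)=C(v_1,p)$ for every $\varepsilon>0$, and it remains to show that $v_1=\max\{|x|,1\}^\alpha$ is an $A_p$ weight.

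\textbf{Step 3: cube estimate.} Let $K$ be a cube with side $\ell$ and center $c$. I split into two cases.
\begin{itemize}
\item[(a)] If $\sqrt N\,\ell\le|c|$, then every $x\in K$ satisfies $\frac12|c|\le|x|\le\frac32|c|$. Thus $v_1$ is comparable on $K$ to the constant $\max\{|c|,1\}^\alpha$, and the product in \eqref{04.24.1} is bounded by a constant depending only on $\alpha$ and $N$.
\item[(b)] Otherwise $K\subset B_{\rho}$ with $\rho=2\sqrt N\,\ell$. Then
\begin{equation*}
\frac1{|K|}\int_K v_1\,dx\le \max\{\rho,1\}^\alpha .
\end{equation*}
For the other factor, integration in polar coordinates gives
\begin{equation*}
\frac1{|K|}\int_K v_1^{-\frac{1}{p-1}}\,dx\le \frac{C}{\ell^N}\int_{B_\rho}\max\{|x|,1\}^{-\frac{N\alpha}{2}}\,dx\le C\max\{\rho,1\}^{-\frac{N\alpha}{2}}.
\end{equation*}
The last inequality holds because $\frac{N\alpha}{2}<N$. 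When $\rho\ge1$ the integral is about $\rho^{N-\frac{N\alpha}{2}}$ and $\ell^N\sim\rho^N$. When $\rho<1$ the integrand equals $1$.

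Raising the second bound to the power $p-1=\frac2N$ gives $\max\{\rho,1\}^{-\alpha}$, which cancels the first factor.
\end{itemize}
Combining the two cases gives an $A_p$ constant for $v_1$ depending only on $N$ and $\alpha$. Together with Steps 1 and 2, this proves the uniform bound on the $A_{1+\frac{2}{N}}$ constant of $w_\varepsilon$.

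\textbf{Main obstacle.} I expect the main obstacle to be case (b) of Step 3. The integrability of $|x|^{-\frac{\alpha}{p-1}}$ at infinity in the averaged sense is exactly where $\alpha<N(p-1)$ is used, and it is also what fails for the exponent $1+\frac1N$ unless $\alpha<1$.
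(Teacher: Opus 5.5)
Your proof is correct. Your reading of the exponent also matches what the paper actually proves: its proof works with $w_\varepsilon^{-N/2}$ and the power $\frac{2}{N}$ throughout, and it ends with $c(w_\varepsilon,1+\frac{2}{N})\le C$. So the $1+\frac1N$ in the statement is a typo. As you note, a uniform $A_{1+\frac1N}$ bound genuinely fails for $\alpha\ge1$: rescaled cubes of side $\ell\sim1/\varepsilon$ centred at the origin give a quotient of order $\ell^{\alpha-1}$, or $(\log\ell)^{1/N}$ when $\alpha=1$.

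Your route differs from the paper's. The paper first observes that $w_\varepsilon$ is $A_1$ with an $\varepsilon$-dependent constant; this only makes sense for cubes in a bounded region. It then bounds the $A_{1+\frac2N}$ quotient of $w_\varepsilon$ directly, splitting cubes of half-side $a$ into three regimes:
\begin{itemize}
\item $a\ll\varepsilon$: local near-constancy of $w_\varepsilon$;
\item $a\approx\varepsilon$: an explicit polar integral of $|x|^{-N\alpha/2}$ near the origin, with ``the other cases similar'';
\item $a\gg\varepsilon$: an informal comparison ``$\approx$'' with the $A_{1+\frac2N}$ constant of $|x|^\alpha$.
\end{itemize}

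You instead do two reductions before any computation. First, you replace $w_\varepsilon$ by the comparable weight $\max\{|x|,\varepsilon\}^\alpha$. Second, you use the invariance of the $A_p$ quotient under constant multiples and dilations to reduce every $\varepsilon$ to the single weight $\max\{|x|,1\}^\alpha$. The standard two-case split then finishes: a cube far from the origin relative to its size, or a cube inside a ball about the origin of comparable radius.

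What your version buys: the paper's three regimes collapse into one fixed computation, so uniformity in $\varepsilon$ is automatic. You also get an explicit constant depending only on $N$ and $\alpha$, valid for all cubes in $\mathbb{R}^N$. Membership in $A_{1+\frac2N}$ follows directly, with no detour through $A_1$. Finally, the informal steps in the paper become precise, and it is transparent exactly where $\alpha<N(p-1)$ is used.

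What the paper's version buys: it stays with $w_\varepsilon$ itself and makes visible how, for large cubes, the bound is inherited from the limiting weight $|x|^\alpha$. That is conceptually natural for an approximation argument, but the paper does not quantify those comparisons.
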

\begin{proof}
It is clear that $w_\varepsilon$ is an $A_1$ weight since
\begin{equation*}
\frac{1}{|K|}\int_K w_\varepsilon(x) \, dx \leq \frac{4m^\alpha}{\varepsilon} \essinf_K w,
\end{equation*}
and $\essinf_K w_\varepsilon \geq \frac{\varepsilon}{4}$. By the property $A_{p_1} \subseteq A_{p_2}$ for $1 \leq p_1 < p_2$, it follows that $w_\varepsilon$ is an $A_{1+\frac{2}{N}}$ weight.
Next, we show that $\{c(w_\varepsilon, 1+\frac{1}{N}) \colon \varepsilon \in (0,1)\}$ is uniformly bounded and depends only on $c(w,1+\frac{1}{N})$.

There are three  cases.

\textbf{Case 1: $0 < a \ll \varepsilon$.}
Taking $y=(y_1,\cdots,y_N)\in \Omega$, consider $K=\prod_{i=1}^N(y_i-a,y_i+a)$. For sufficiently small $a$, since $w_\varepsilon \geq \frac{\varepsilon}{4}$, we have $\frac{1}{2}w_\varepsilon(y) \leq w_\varepsilon(x) \leq 2w_\varepsilon(y)$ for all $x \in K$. Thus,
\begin{equation*}
\begin{split}
\frac{1}{|K|}\int_K w_\varepsilon(x) \, dx
&\leq 2 (2a)^{-N}\int_{B_{\sqrt{N}a}} w_\varepsilon(y) \, dx = 2(2a)^{-N} w_\varepsilon(y)|B_{\sqrt{N}a}| = \frac{|B_1|N^{\frac{N}{2}}}{2^{N-1}} w_\varepsilon(y),
\end{split}
\end{equation*}
and
\begin{equation*}
\begin{split}
\left(\frac{1}{|K|}\int_K w_\varepsilon(x)^{-\frac{N}{2}} \, dx\right)^{\frac{2}{N}}
&\leq 2w_\varepsilon(y)^{-1}\left(\frac{|B_1|N^{\frac{N}{2}}}{2^N}\right)^{\frac{2}{N}} = w_\varepsilon(y)^{-1}\frac{|B_1|^{\frac{2}{N}}N}{2}.
\end{split}
\end{equation*}
These imply that
\begin{equation*}
\begin{split}
\left(\frac{1}{|K|}\int_K w_\varepsilon(x) \, dx\right)\left(\frac{1}{|K|}\int_K w_\varepsilon(x)^{-\frac{N}{2}} \, dx\right)^{\frac{2}{N}} \leq \frac{1}{2^N}|B_1|^{1+\frac{2}{N}}N^{1+\frac{N}{2}}.
\end{split}
\end{equation*}
\textbf{Case 2: $a \approx \varepsilon$.}
Taking $y=(y_1,\cdots,y_N)\in \Omega$, consider $K_y=\prod_{i=1}^N(y_i-a,y_i+a)$ for $a \approx \varepsilon$. It is evident that
\begin{eqnarray*}
&&\left(\frac{1}{|K_y|}\int_{K_y} w_\varepsilon(x) \, dx\right)\left(\frac{1}{|K_y|}\int_{K_y} w_\varepsilon(x)^{-\frac{N}{2}} \, dx\right)^{\frac{2}{N}} \\
&&\leq (3\varepsilon)^\alpha \frac{1}{4\varepsilon^2}\left(\int_{B_{\sqrt{N}\varepsilon}}|x|^{-\frac{N\alpha}{2}} \, dx\right)^{\frac{2}{N}} = (3\varepsilon)^\alpha \frac{1}{4\varepsilon^2} |\partial B_1|^{\frac{2}{N}}\left(\int_0^{\sqrt{N}\varepsilon} r^{N-1-\frac{N\alpha}{2}} \, dr\right)^{\frac{2}{N}} \\
&&\leq \frac{3^\alpha}{2^{2-\frac{2}{N}}}|\partial B_1|^{\frac{2}{N}} (2-\alpha)^{-\frac{2}{N}}N^{1-\frac{\alpha}{2}-\frac{2}{N}}
\end{eqnarray*}
for $|y| \leq 2\varepsilon$. The other cases are similar to the case of $|x|^\alpha$.

\textbf{Case 3: $\varepsilon \ll a$.}
Taking $y=(y_1,\cdots,y_N)\in \Omega$, consider $K_y=\prod_{i=1}^N(y_i-a,y_i+a)$ for $\varepsilon \ll a$. Then,
\begin{equation*}
\begin{split}
&\left(\frac{1}{|K_y|}\int_{K_y} w_\varepsilon(x) \, dx\right)\left(\frac{1}{|K_y|}\int_{K_y} w_\varepsilon(x)^{-\frac{N}{2}} \, dx\right)^{\frac{2}{N}} \\
&\approx \left(\frac{1}{|K_y|}\int_{K_y} w(x) \, dx\right)\left(\frac{1}{|K_y|}\int_{K_y} w(x)^{-\frac{N}{2}} \, dx\right)^{\frac{2}{N}} \leq c\left(w,1+\frac{2}{N}\right).
\end{split}
\end{equation*}
Overall, we conclude that $c(w_\varepsilon, 1+\frac{2}{N}) \leq C$ for all $\varepsilon > 0$, where the constant $C > 0$ depends only on $w$ (i.e., $\alpha, N$ and $\Omega$). Moreover, the constant $C > 0$ is independent of $\varepsilon > 0$.
\end{proof}

  Denote
\begin{equation*}
\psi_k = \psi_\varepsilon, \quad \xi_k = \Theta \eta_\varepsilon, \quad \widehat{\varphi}_k = \widehat{\varphi}_\varepsilon, \quad Q_k = Q_\varepsilon
\end{equation*}
where $\varepsilon = \frac{1}{k}$ and $k \in \mathbb{N}$. Additionally, define $\xi_0 = \Theta \gamma(-2m^{2-\alpha} + \psi^{2-\alpha})$ with $\psi = |x|$.
By Theorem \ref{12.21.T1}, for $g \in L^2(Q;w^{-1})$, $k \in \mathbb{N}$, and $\widehat{\varphi}_k(T) = \varphi_T \in L^2(\Omega)$, we have
\begin{equation}\label{12.22.1}
\begin{split}
&s\iint_Q \Theta \psi_k^\alpha |\nabla \widehat{\varphi}_k|^2|\nabla \psi_k|^2 e^{2s\xi_k} \, dx \, dt + s^3\iint_Q \Theta^3\psi_k^{2-\alpha} \widehat{\varphi}_k^2|\nabla\psi_k|^4e^{2s\xi_k} \, dx \, dt\\
&\leq C\iint_Q e^{2s\xi_k}g^2 \, dx \, dt + Cs\iint_{\partial Q} \Theta \psi_k^\alpha \left(\frac{\partial \widehat{\varphi}_k}{\partial \nu}\right)^2(x \cdot \nu)e^{2s\xi_k} \, dS \, dt\\
&\quad +Cs^2\iint_{Q_k}\Theta^3\widehat{\varphi}_k^2 e^{2s\xi_k} \, dx \, dt + Ck^{2-\alpha}s^2\iint_{Q_k}\Theta \widehat{\varphi}_k^2e^{2s\xi_k} \, dx \, dt,
\end{split}
\end{equation}
where the constant $C > 0$ is independent of $k \in \mathbb{N}$.

  To ensure thorough understanding, we have divided our analysis into distinct stages.

{\it Step 1}. Assume that $\widehat{\varphi}_k\ (k \in \mathbb{N})$ is the solution of \eqref{12.15.1} on $Q = (-\beta,T+\beta)$ with $\widehat{\varphi}_k(T) = u_T$. Since $B_R \times (0,T)$ is a compact subset of $\Omega \times (-\beta,T+\beta)$, by Theorem 3.11 or Theorem 3.14 in \cite{FF} and Lemma \ref{01.18.L1}, there exists a subsequence of $\{\widehat{\varphi}_k\}_{k \in \mathbb{N}}$, still denoted by itself, such that
\begin{equation*}
\widehat{\varphi}_k \to \widehat{\varphi}_0 \quad \text{uniformly on } B_R \times (0,T).
\end{equation*}
Hence, for any $h > 0$, there exists $k_0 > 0$ such that for any $k \geq k_0$,
\begin{equation*}
|\widehat{\varphi}_k(x,t) - \widehat{\varphi}_0(x,t)| < h \quad \text{on } B_R \times (0,T),
\end{equation*}
and thus, as $k \to +\infty$, it has
\begin{equation*}
\begin{split}
k^{2-\alpha}\iint_{Q_k}\Theta  \widehat{\varphi}_k^2e^{2s\xi_k} \, dx \, dt
&\leq 2k^{2-\alpha}\iint_{Q_k}\Theta  (\widehat{\varphi}_k - \widehat{\varphi}_0)^2e^{2s\xi_k} \, dx \, dt + 2k^{2-\alpha}\iint_{Q_\varepsilon}\Theta \widehat{\varphi}_0^2e^{2s\xi_k} \, dx \, dt\\
&\leq Ck^{2-\alpha}|B_k|h^2\int_0^T\Theta e^{-C\Theta} \, dt + Ck^{2-\alpha}|B_k|\int_0^T\Theta e^{-C\Theta}\left(\frac{1}{|B_k|}\int_{B_k} \widehat{\varphi}_0^2 \, dx\right) \, dt \to 0,
\end{split}
\end{equation*}
by $N \geq 2$, $-2m^{2-\alpha} \leq \eta_k \leq -m^{2-\alpha}$, $\Theta e^{-C\Theta}$ is bounded on $Q$, and the Lebesgue differentiation theorem
\begin{equation*}
\lim_{k \to 0}\frac{1}{|B_k|}\int_{B_k}\widehat{\varphi}_0^2 \, dx \, dt = \widehat{\varphi}_0(0,t) \quad \text{for all } t \in (0,T).
\end{equation*}
Moreover, since $\Theta^3e^{-C\Theta}$ is bounded on $Q$, we have
\begin{equation*}
k^{2-\alpha}\iint_{Q_k}\Theta^3\widehat{\varphi}_k^2 e^{2s\xi_k} \, dx \, dt \to 0 \quad \text{as } k \to +\infty.
\end{equation*}
{\it Step 2}. We have
\begin{equation*}
\begin{split}
\iint_{Q_k} \Theta^3\psi_k^{2-\alpha}\widehat{\varphi}_k^2|\nabla \psi_k|^4e^{2s\xi_k} \, dx \, dt
&\leq Ck^{\alpha-2}\iint_{Q_k} \Theta^3 \widehat{\varphi}_k^2e^{2s\xi_k} \, dx \, dt\\
&\leq Ck^{\alpha-2}\iint_{Q_k}  \widehat{\varphi}_k^2 \, dx \, dt \to 0 \quad \text{as } k \to \infty
\end{split}
\end{equation*}
by Corollary \ref{08.23.C1} and Step 1. We also have
\begin{equation*}
\iint_{Q_k}\Theta^3\psi^{2-\alpha}\widehat{\varphi}_0^2e^{2s\xi_0} \, dx \, dt \leq k^{\alpha-2}\iint_{Q}\Theta^3\widehat{\varphi}_0^2e^{2s\xi_0} \, dx \, dt \to 0 \quad \text{as } k \to \infty.
\end{equation*}

{\it Step 3}. By the same argument as Step 1, $\xi_k \to \xi_0$ on $Q$ everywhere, and Corollary \ref{08.23.C1}, for each $\varphi_T \in H_0^1(\Omega) \cap H^3(\Omega)$ and $\Div(w_k \nabla \varphi_T) \in H_0^1(\Omega)$, we have
\begin{equation*}
\iint_{\partial Q}\Theta \psi_k^\alpha\left(\frac{\partial \widehat{\varphi}_k}{\partial\nu}\right)^2 (x \cdot \nu)e^{2s\xi_k} \, dS \, dt \to \iint_{\partial Q}\Theta \psi^\alpha \left(\frac{\partial\widehat{\varphi}_0}{\partial\nu}\right)^2(x \cdot \nu)e^{2s\xi_0} \, dS \, dt \mbox{ as } k \to \infty.
\end{equation*}
From $0 < \frac{1}{k} \ll R$, \eqref{12.22.1} and Corollary \ref{08.23.C1}, we have
\begin{equation*}
\begin{split}
\iint_{(\Omega\setminus B_R) \times (0,T)} \Theta \psi^\alpha |\nabla \widehat{\varphi}_0|^2e^{2s\xi_0} \, dx \, dt
&\leq \liminf_{k \to \infty}\iint_Q \Theta \psi^\alpha |\nabla \widehat{\varphi}_k|^2|\nabla \psi_k|^2e^{2s\xi_k}\chi_{\Omega\setminus B_k} \, dx \, dt\\
&\leq \liminf_{k \to \infty}\iint_Q \Theta \psi_k^\alpha |\nabla \widehat{\varphi}_k|^2|\nabla\psi_k|^2e^{2s\xi_k} \, dx \, dt.
\end{split}
\end{equation*}

   By combining \eqref{12.22.1} with Steps 1–3 and utilizing Corollary \ref{08.23.C1}, for every $\vp_T\in H_0^1(\Om)\cap H^3(\Om)$ and $\Div(w_k \nabla \vp_T)\in H_0^1(\Om)$, we derive
\begin{equation}\label{gbz5}
\begin{split}
&s\iint_{(\Om\setminus B_R)\times (0,T)} \Theta \psi^\al |\nabla \wh\vp_0|^2e^{2s\xi_0}\df x\df t+s^3\iint_Q \Theta^3\psi^{2-\al}\wh\vp_0^2e^{2s\xi_0}\df x\df t\\
&\leq C\iint_Q e^{2s\xi_0}g^2\df x\df t+Cs\iint_{\pt Q}\Theta \psi^{\al}\left(\f{\pt\wh\vp_0}{\pt\nu}\right)^2(x\cdot\nu)e^{2s\xi_0}\df S\df t,
\end{split}
\end{equation}
where the positive constant $C$ depends exclusively on $\al, N, T, R$ and $\Om$. We summarize \dref{gbz5}
into Theorem \ref{12.23.T1} following.

\begin{theorem}\label{12.23.T1}
Let $T>0$. Then, there exists a positive constant $C$ such that for every solution $\wh\vp_0$ of \eqref{12.15.1} satisfying, for all $s\geq 1$, we have
\begin{equation*}
\begin{split}
&s\iint_{(\Om\setminus B_R)\times (0,T)}\Theta \psi^\al |\nabla \wh\vp_0|^2 e^{2s\xi_0}\df x\df t+s^3\iint_Q \Theta^3 \psi^{2-\al}\wh\vp_0^2 e^{2s\xi_0}\df x\df t\\
&\leq C\iint_Q e^{2s\xi_0}g^2\df x\df t+Cs\iint_{\pt Q}\Theta \psi^\al \left(\f{\pt \wh\vp_0}{\pt \nu}\right)^2(x\cdot\nu)e^{2s\xi_0}\df S\df t.
\end{split}
\end{equation*}
\end{theorem}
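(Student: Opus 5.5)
The plan is to obtain Theorem \ref{12.23.T1} as the limit $k\to\infty$ of the $\varepsilon$-dependent Carleman estimate \eqref{12.22.1}, which is Theorem \ref{12.21.T1} with $\varepsilon=1/k$. I first treat terminal data $\varphi_T\in H_0^1(\Omega)\cap H^3(\Omega)$ with $\Div(w_k\nabla\varphi_T)\in H_0^1(\Omega)$, and then pass to general data by density. Fix $s\ge1$. For each $k$, let $\widehat\varphi_k$ solve the backward problem \eqref{12.15.1} with weight $w_k$, the same source $g$ and terminal value $\varphi_T$. The reversal $t\mapsto T-t$ turns this into the forward problem \eqref{08.16.1}, so Theorem \ref{08.16.T1} and Corollary \ref{08.23.C1} apply. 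Since the constant $C$ in \eqref{12.22.1} does not depend on $k$, it is enough to show three things: the left-hand side of the target estimate is at most the $\liminf$ of the left-hand side of \eqref{12.22.1}; the first two right-hand terms of \eqref{12.22.1} converge to their $k=0$ counterparts; and the two residual terms supported on $Q_k=B_{1/k}\times(0,T)$ tend to zero.

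\emph{Right-hand side.} I use the bounds $-2m^{2-\alpha}\le\eta_k\le -m^{2-\alpha}$, uniform in $k$, together with the pointwise convergence $\xi_k\to\xi_0$. Dominated convergence then gives $\iint_Q e^{2s\xi_k}g^2\to\iint_Q e^{2s\xi_0}g^2$. On $\partial Q$ we have $\psi_k=|x|$ once $1/k<3R$, so the weight there is fixed. The boundary integral converges by the strong convergence $\partial_\nu\widehat\varphi_k\to\partial_\nu\widehat\varphi_0$ in $L^2(\partial Q)$ from Corollary \ref{08.23.C1}. For the residual terms, $\Theta^je^{2s\xi_k}\le \Theta^je^{-2sm^{2-\alpha}\Theta}$ is bounded on $(0,T)$. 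It therefore suffices to show $k^{2-\alpha}\iint_{Q_k}\widehat\varphi_k^2\to0$ (and likewise without the factor $k^{2-\alpha}$). Since $\Omega$ is an annulus with $\dist(0,\Omega)\ge3R$, for $k$ large $Q_k$ lies outside $\overline\Omega\times(0,T)$, so these residual terms vanish identically. If one prefers not to use this, I would instead invoke the uniform local boundedness of $\widehat\varphi_k$ near $0$. That bound comes from the uniform $A_p$ constants of Lemma \ref{01.18.L1} combined with the Fabes--Kenig--Serapioni estimates. It gives $k^{2-\alpha}\iint_{Q_k}\widehat\varphi_k^2\le Ck^{2-\alpha-N}\to0$, because $N\ge2>2-\alpha$.

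\emph{Left-hand side.} I drop the gradient term on $B_R\times(0,T)$. On $(\Omega\setminus B_R)\times(0,T)$ we have $\psi_k=\psi$, $|\nabla\psi_k|=1$ and $\xi_k=\xi_0$. Corollary \ref{08.23.C1} gives $\nabla\widehat\varphi_k\to\nabla\widehat\varphi_0$ strongly in $L^2$ there, and a cutoff in time near $0$ and $T$ handles the weights. Together these yield convergence, or at least lower semicontinuity, of the gradient term. For the zeroth-order term I use $\widehat\varphi_k\to\widehat\varphi_0$ in $L^2(Q)$ and Fatou's lemma to get a liminf bound. The contribution of $Q_k$, where $|\nabla\psi_k|\ne1$, is $O(k^{\alpha-2})$ by Step~2 of the preceding computation. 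Taking $\liminf$ in \eqref{12.22.1} gives the stated estimate for regular data.

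Finally, I extend to arbitrary admissible solutions by approximating $\varphi_T$ in $L^2(\Omega)$ with such regular data, using Lemma \ref{12.09.L1} for stability of the left side. The main obstacle I expect is the boundary term. It carries the factor $(x\cdot\nu)$, which may be negative, and it requires control of $\partial_\nu\widehat\varphi_0$ in $L^2(\partial Q)$ under the density passage. I would first try to keep the smooth-data class and state the result for solutions with $\partial_\nu\widehat\varphi_0\in L^2(\partial Q)$. Failing that, I would pass to the limit using the uniform parabolic regularity away from $0$ from Step~3 of Corollary \ref{08.23.C1}.
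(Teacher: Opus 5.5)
Your argument is the paper's own. You pass to the limit $k\to\infty$ in \eqref{12.22.1}, bound the left side by lower semicontinuity on $(\Omega\setminus B_R)\times(0,T)$ (where $\psi_k=\psi$, $\xi_k=\xi_0$) plus the $O(k^{\alpha-2})$ control on $Q_k$, take the boundary term from Corollary~\ref{08.23.C1}, and kill the residual terms on $Q_k$ with $k$-uniform estimates near $0$ (Lemma~\ref{01.18.L1} plus weighted parabolic theory). These are Steps 1--3 preceding the theorem.

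Discard your first option for the residual terms, though. The ``$\Omega=A_{3R,6R}$'' in Section~\ref{Section 2} is a typo for the observation set $\omega$: the same sentence requires $8R<\dist(0,\partial\Omega)$, and Proposition~\ref{12.23.P1} integrates over $B_{4R}\times(0,T)$. So $0$ is an interior point of $\Omega$, $Q_k\subset Q$, and those terms do not vanish. Had $\Omega$ really been the annulus, $w_k=w$ on $\Omega$ for large $k$ and the theorem would simply be Theorem~\ref{12.21.T1}, with no limit to take.

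Your fallback is therefore the real argument, and it is a bit leaner than the paper's Step 1. Uniform boundedness alone gives $k^{2-\alpha}\iint_{Q_k}\widehat\varphi_k^2\le Ck^{2-\alpha-N}\to0$, without the uniform convergence and Lebesgue differentiation used there. One detail is missing: interior bounds for the backward problem hold on $B_r\times(0,T-\delta)$ but not up to the terminal time. Near $t=T$ you can use \eqref{08.20.1}. Since $\psi_k\le 1/k$ on $B_{1/k}$, it gives $k^{2-\alpha}\int_{B_{1/k}}\widehat\varphi_k^2\,dx\le C\int_\Omega w_k|\nabla\widehat\varphi_k|^2\,dx$. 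Combine this with Lemma~\ref{08.22.L1} and $\Theta e^{-c\Theta}\to0$ as $t\to T$.

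Two further remarks. First, the convergence of normal derivatives in Corollary~\ref{08.23.C1} is proved only for zero source. So for $g\neq0$, the case needed in Step 5, its Step 3 must be redone with $g$; the paper skips this point as well. For the gradient term, by contrast, the weak convergence of Theorem~\ref{08.16.T1} and weak lower semicontinuity already suffice.

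Second, the paper proves this estimate only for terminal data in $H_0^1(\Omega)\cap H^3(\Omega)$ with $\Div(w_k\nabla\varphi_T)\in H_0^1(\Omega)$, and never passes to general data here. Density is used only in Section~\ref{Section 6}, after Step 5 has eliminated the boundary term, since $\zeta\widehat\varphi_0$ vanishes near $\partial\Omega$. Restricting to the smooth-data class, as you propose, is exactly what is proved, so the obstacle you identified is sidestepped rather than overcome.
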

{\it Step 5}. Choose $\zeta\in C^\iy(\R^N), 0\leq \zeta\leq 1$ such that
\begin{equation*}
\zeta =0 \text{  on }\R^N\setminus B_{5R},   \zeta=1 \text{  on }B_{4R},   |\nabla \zeta|\leq \f{C}{R} \text{  and } \left|\f{\pt^2\zeta}{\pt x_i\pt x_j}\right|\leq \f{C}{R^2} \text{  on }\R^N,
\end{equation*}
where the constant $C>0$ is absolute.
Define $\wt \vp_0=\zeta \wh\vp_0$. Then $\wt\vp_0$ is a solution of the following system
\begin{equation*}
\begin{cases}
\pt_t\wt\vp_0+\Div(w \nabla \wt \vp_0)=2w\nabla\zeta\cdot \nabla\wh\vp_0+\wh\vp_0\Div(w\nabla\zeta), &\text{ in }Q,\\
\wt\vp_0=0, &\text{ on }\pt Q, \\
\wt\vp_0(T)=\zeta \vp_T, &\text{ in }\Om.
\end{cases}
\end{equation*}
Let $g=2w\nabla\zeta\cdot \nabla\wh\vp_0+\wh\vp_0\Div(w\nabla\zeta)$. Since $g\in L^2(Q;w^{-1})$ due to $\supp g\subset (B_{5R}\setminus B_{4R})\times (0,T)$, applying Theorem \ref{12.23.T1} to $\wt\vp_0$ yields
\begin{equation*}
\begin{split}
&s\iint_{(\Om\setminus B_R)\times (0,T)} \Theta \psi^\al |\nabla \wt\vp_0|^2e^{2s\xi_0}\df x\df t+s^3\iint_Q\Theta^3\psi^{2-\al}\wt \vp_0^2e^{2s\xi_0}\df x\df t\\
&\leq C\iint_{Q}e^{2s\xi_0}\left(2w\nabla\zeta\cdot\nabla\wh\vp_0+\wh\vp_0\Div(w\nabla\zeta)\right)^2\df x\df t+Cs\iint_{\pt Q}\psi^\al \left(\f{\pt\wt\vp_0}{\pt \nu}\right)^2(x\cdot\nu)e^{2s\xi_0}\df S\df t\\
&\leq  C\f{1}{R^{2-\al}}\iint_{(B_{5R}\setminus B_{4R})\times (0,T)} e^{2s\xi_0}\left(|\nabla\wh\vp_0|^2+|\wh\vp_0|^2\right)\df x\df t
\end{split}
\end{equation*}
given $\f{\pt\wt\vp_0}{\pt\nu}=0$ on $\pt Q$. By the classical Caccioppoli's inequality (see Lemma \ref{12.23.L1} below): There exists a positive constant $C>0$ such that
\begin{equation*}
\iint_{(B_{5R}\setminus B_{4R})\times (0,T)} e^{2s\xi_0}|\nabla \wh\vp_0|^2\df x\df t\leq C\iint_{(B_{6R}\setminus B_{3R})\times (0,T)} (1+\Theta^\f{5}{4})e^{2s\xi_0} \wh\vp_0^2\df x\df t,
\end{equation*}
we obtain
\begin{equation}\label{gbz6}
\begin{split}
&s\iint_{(\Om\setminus B_R)\times (0,T)} \Theta \psi^\al |\nabla \wt\vp_0|^2e^{2s\xi_0}\df x\df t+s^3\iint_Q\Theta^3\psi^{2-\al}\wt \vp_0^2e^{2s\xi_0}\df x\df t\\
&\leq C \iint_{(B_{6R}\setminus B_{3R})\times (0,T)} (1+\Theta^\f{5}{4})e^{2s\xi_0} |\wh\vp_0|^2 \df x\df t,
\end{split}
\end{equation}
where the constant  $C>0$ depends solely on $\al, R, N, T$ and $\Om$.
 We summarize \dref{gbz6} into  Proposition \ref{12.23.P1} following.

 \begin{proposition}\label{12.23.P1}
Let $\widehat{\varphi}_0$ be the solution of equation \eqref{12.15.1} with $g=0$.
Then there exists a constant $C=C(\alpha, R, N, T,\Omega)>0$ such that
\begin{equation*}
\begin{split}
&\iint_{(B_{4R}\setminus B_R)\times (0,T)} \Theta \psi^\alpha |\nabla \widehat{\varphi}_0|^2e^{2s\xi_0}\,dx\,dt+\iint_{B_{4R}\times (0,T)}\Theta^3\psi^{2-\alpha} |\widehat{\varphi}_0|^2e^{2s\xi_0}\,dx\,dt\\
&\leq +C\iint_{(B_{6R}\setminus B_{3R})\times (0,T)} (1+\Theta^{\frac{5}{4}})e^{2s\xi_0}\widehat{\varphi}_0^2\,dx\,dt.
\end{split}
\end{equation*}
\end{proposition}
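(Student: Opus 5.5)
The plan is to localize the Carleman estimate of Theorem \ref{12.23.T1} with a cutoff and then absorb the commutator terms with a Caccioppoli inequality. This is essentially the computation leading to \eqref{gbz6}.

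Fix $\zeta\in C^\infty(\mathbb{R}^N)$ with $0\le\zeta\le1$, $\zeta=1$ on $B_{4R}$, $\zeta=0$ outside $B_{5R}$, $|\nabla\zeta|\le C/R$ and $|D^2\zeta|\le C/R^2$. Set $\widetilde\varphi_0=\zeta\widehat\varphi_0$. Since $\widehat\varphi_0$ solves \eqref{12.15.1} with $g=0$, the function $\widetilde\varphi_0$ solves the same backward problem with right-hand side
\[
g=2w\nabla\zeta\cdot\nabla\widehat\varphi_0+\widehat\varphi_0\,\Div(w\nabla\zeta),
\]
and with terminal datum $\zeta\varphi_T$. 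This $g$ is supported in $(B_{5R}\setminus B_{4R})\times(0,T)$. There $w\approx R^\alpha$ is smooth and bounded above and below, so $g\in L^2(Q;w^{-1})$ and Theorem \ref{12.23.T1} applies to $\widetilde\varphi_0$.

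The boundary term in Theorem \ref{12.23.T1} vanishes. Indeed, $\zeta$ vanishes in a neighbourhood of the part of $\partial\Omega$ carrying the normal derivative, so $\partial\widetilde\varphi_0/\partial\nu=0$ on $\partial Q$. Hence the left-hand side of Theorem \ref{12.23.T1}, written for $\widetilde\varphi_0$, is bounded by
\[
C\iint_Q e^{2s\xi_0}g^2\,dx\,dt\le C(R,\alpha)\iint_{(B_{5R}\setminus B_{4R})\times(0,T)}e^{2s\xi_0}\bigl(|\nabla\widehat\varphi_0|^2+\widehat\varphi_0^2\bigr)\,dx\,dt.
\]
On $B_{4R}$ we have $\widetilde\varphi_0=\widehat\varphi_0$. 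Restricting the left-hand side to $(B_{4R}\setminus B_R)\times(0,T)$ and $B_{4R}\times(0,T)$, and using $s\ge1$ to drop the factors $s$ and $s^3$, gives exactly the left side of the proposition.

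It remains to remove the gradient term on the right. I will prove a weighted Caccioppoli inequality on the annulus, which is Lemma \ref{12.23.L1}:
\[
\iint_{(B_{5R}\setminus B_{4R})\times(0,T)}e^{2s\xi_0}|\nabla\widehat\varphi_0|^2\,dx\,dt\le C\iint_{(B_{6R}\setminus B_{3R})\times(0,T)}\bigl(1+\Theta^{5/4}\bigr)e^{2s\xi_0}\widehat\varphi_0^2\,dx\,dt.
\]
To prove it, take a second cutoff $\chi$ equal to $1$ on $B_{5R}\setminus B_{4R}$ and supported in $B_{6R}\setminus B_{3R}$. Test the equation against $\chi^2e^{2s\xi_0}\widehat\varphi_0$ and integrate over $Q$. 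The time-boundary terms vanish because $e^{2s\xi_0}\to0$ as $t\to0$ and as $t\to T$, since $\Theta\to\infty$ and $\eta<0$. Integration by parts gives
\[
\iint\chi^2e^{2s\xi_0}w|\nabla\widehat\varphi_0|^2=\tfrac12\iint\partial_t(\chi^2e^{2s\xi_0})\widehat\varphi_0^2-\iint\widehat\varphi_0\,w\nabla\widehat\varphi_0\cdot\nabla(\chi^2e^{2s\xi_0}).
\]
On the annulus $w\approx R^\alpha$, so the left side controls the target integral. The cross term contains $\chi^2\nabla e^{2s\xi_0}=2s\chi^2e^{2s\xi_0}\nabla\xi_0$ as well as $e^{2s\xi_0}\nabla(\chi^2)$; it is handled by Young's inequality, absorbing half of the gradient.

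The main obstacle is the powers of $s\Theta$ that appear. The time derivative produces $s\Theta'\eta$, with $|\Theta'|\le C\Theta^{5/4}$. The gradient of the weight produces $s^2\Theta^2$. The stated inequality has only $(1+\Theta^{5/4})$ with a constant independent of $s$. My first attempt is to note that the right-hand side of the proposition carries $e^{2s\xi_0}$ while the left-hand side controls $s^3\Theta^3$-weighted quantities, so one can hide factors. Concretely, I would bound $s^k\Theta^je^{2s\xi_0}$ by $C_k\,e^{2s'\xi_0}$ for a slightly smaller $s'$, using $\xi_0\le-m^{2-\alpha}\Theta$. If that does not work, I would accept an $s$-dependent constant, that is, fix $s$ large at this point; this suffices for the later weak unique continuation argument.
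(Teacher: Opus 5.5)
Your route is the paper's. You use the cutoff $\zeta$ of Step 5, apply Theorem~\ref{12.23.T1} to $\widetilde\varphi_0=\zeta\widehat\varphi_0$ with vanishing normal derivative, and prove Lemma~\ref{12.23.L1} with the test function $\zeta^2e^{2s\xi_0}\widehat\varphi_0$. Up to that last step everything is fine.

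The obstacle you flag is a genuine gap, and the paper does not close it either. Its proof of Lemma~\ref{12.23.L1} only invokes the Cauchy inequality. The exponent $5/4$ matches only the time derivative of the weight, $|\partial_t\xi_0|\le C|\Theta'|\le C\Theta^{5/4}$, and even that term carries a factor $s$. The spatial gradient of the weight, $|\nabla\xi_0|=\gamma(2-\alpha)\Theta|x|^{1-\alpha}$, enters through the cross term $2s\chi^2e^{2s\xi_0}w\,\widehat\varphi_0\nabla\widehat\varphi_0\cdot\nabla\xi_0$. You can treat it by Young's inequality, or by writing $2\widehat\varphi_0\nabla\widehat\varphi_0=\nabla(\widehat\varphi_0^2)$ and integrating by parts; in the latter case it reappears with a positive sign as $2s^2\chi^2e^{2s\xi_0}w|\nabla\xi_0|^2\widehat\varphi_0^2$. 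Either way you are left with a contribution of size $s^2\Theta^2e^{2s\xi_0}\widehat\varphi_0^2$. What the computation actually yields is
\[
\iint_{(B_{5R}\setminus B_{4R})\times(0,T)}e^{2s\xi_0}|\nabla\widehat\varphi_0|^2\,dx\,dt\le C\iint_{(B_{6R}\setminus B_{3R})\times(0,T)}\bigl(1+s\Theta^{5/4}+s^2\Theta^2\bigr)e^{2s\xi_0}\widehat\varphi_0^2\,dx\,dt .
\]

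Neither of your fallbacks turns this into the stated bound.
\begin{itemize}
\item Replacing $e^{2s\xi_0}$ by $e^{2s'\xi_0}$ with $s'<s$ on the right enlarges the right-hand side, since $\xi_0<0$. You would prove a strictly weaker inequality than the proposition.
\item Absorbing into the term $s^3\iint_Q\Theta^3\psi^{2-\alpha}\widetilde\varphi_0^2e^{2s\xi_0}$ of Theorem~\ref{12.23.T1} works at best on $B_{4R}\setminus B_{3R}$. It fails on $B_{6R}\setminus B_{4R}$, where $\widetilde\varphi_0\neq\widehat\varphi_0$.
\item Fixing $s$ and accepting an $s$-dependent constant does not help either. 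The bound $\Theta^2\le C(1+\Theta^{5/4})$ fails as $t\to0$ or $t\to T$ for every $C$.
\end{itemize}

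So your argument, like the paper's, proves the proposition only with $1+\Theta^{5/4}$ replaced by $1+s\Theta^{5/4}+s^2\Theta^2$ (for fixed $s$, equivalently by $C(s)(1+\Theta^2)$). That weaker form is all Theorem~\ref{12.25.T1} needs. It uses the proposition for one fixed $s$, and only through two facts: $\Theta^ke^{2s\xi_0}$ is bounded on $Q$ (because $\xi_0\le-\gamma m^{2-\alpha}\Theta$), and it is bounded below by a positive constant on $\Omega\times(T/4,3T/4)$. The clean repair is to state the proposition with the weaker weight rather than try to hide the powers of $s\Theta$.
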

{\it Step 6}. Using the inequalities
\begin{equation*}
\Theta^3e^{2s\xi_0}\geq C>0   \text{  in }\Omega\times \left(\frac{T}{4}, \frac{3T}{4}\right)
\end{equation*}
and
\begin{equation*}
\Theta^3e^{2s\xi_0}\leq C   \text{  in }Q,
\end{equation*}
where the constant $C>0$ depend only on $\alpha, R, N, T$ and $\Omega$, we have thus established
the following Theorem \ref{12.25.T1}.

\begin{theorem}\label{12.25.T1}
Under the assumptions in Proposition \ref{12.23.P1}, there exists a constant $C=C(\alpha, R, N, T,\Omega)>0$ such that
\begin{equation*}
\begin{split}
&\iint_{(B_{4R}\setminus B_R)\times \left(\frac{T}{4}, \frac{3T}{4}\right)}|x|^\alpha |\nabla \widehat{\varphi}_0|^2\,dx\,dt+\iint_{B_{4R}\times \left(\frac{T}{4}, \frac{3T}{4}\right)} |x|^{2-\alpha}|\widehat{\varphi}_0|^2\,dx\,dt\\
&\leq +C\iint_{(B_{6R}\setminus B_{3R})\times (0,T)} \widehat{\varphi}_0^2\,dx\,dt.
\end{split}
\end{equation*}
\end{theorem}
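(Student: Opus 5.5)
The plan is to deduce Theorem \ref{12.25.T1} directly from Proposition \ref{12.23.P1}. We compare the weights on the two sides: they must be bounded below on the middle time interval and bounded above on the whole of $(0,T)$. Throughout, $s\geq 1$ is fixed (say $s=1$, or the value dictated by the choice of $\gamma$ in Theorem \ref{12.21.T1}), so all constants may depend on $s$, $\gamma$, $\alpha$, $R$, $N$, $T$ and $\Omega$.

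\textbf{Lower bounds on the left-hand side.} On $(T/4,3T/4)$ we have $t(T-t)\in[3T^2/16,T^2/4]$. Hence $\Theta(t)=[t(T-t)]^{-4}$ lies between two positive constants depending only on $T$. On $\Omega$ we have $-2\gamma m^{2-\alpha}\le\eta_0\le-\gamma m^{2-\alpha}$, so $\xi_0=\Theta\eta_0$ is bounded below by $-2\gamma m^{2-\alpha}\max_{[T/4,3T/4]}\Theta$. Consequently $e^{2s\xi_0}\ge c>0$ on $\Omega\times(T/4,3T/4)$. Therefore $\Theta\,e^{2s\xi_0}\ge c$ and $\Theta^3 e^{2s\xi_0}\ge c$ there. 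Restricting the time integrals on the left of Proposition \ref{12.23.P1} to $(T/4,3T/4)$, and recalling $\psi=|x|$, yields
\[
c\iint_{(B_{4R}\setminus B_R)\times(\frac T4,\frac{3T}4)}|x|^\alpha|\nabla\widehat\varphi_0|^2\,dx\,dt + c\iint_{B_{4R}\times(\frac T4,\frac{3T}4)}|x|^{2-\alpha}|\widehat\varphi_0|^2\,dx\,dt,
\]
which is bounded by the left-hand side of the proposition.

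\textbf{Upper bound on the right-hand side.} On the right we need $(1+\Theta^{5/4})e^{2s\xi_0}\le C$ on all of $Q$. Since $\xi_0\le-\gamma m^{2-\alpha}\Theta$, we get $(1+\Theta^{5/4})e^{2s\xi_0}\le (1+\Theta^{5/4})e^{-2s\gamma m^{2-\alpha}\Theta}$. The function $\tau\mapsto(1+\tau^{5/4})e^{-c_0\tau}$ is bounded on $[0,\infty)$ with $c_0=2s\gamma m^{2-\alpha}>0$, and $\Theta\ge 16/T^4>0$. So this weight is bounded by a constant depending on $T$, $\gamma$, $s$, $m$ and $\alpha$. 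This is the only point requiring care: the blow-up of $\Theta$ near $t=0,T$ must be absorbed by the exponential decay, which works precisely because $\eta_0$ is strictly negative, uniformly in $x$. Hence the right-hand side of Proposition \ref{12.23.P1} is at most $C\iint_{(B_{6R}\setminus B_{3R})\times(0,T)}\widehat\varphi_0^2\,dx\,dt$.

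\textbf{Conclusion.} Dividing by $c$ and combining the two bounds gives the inequality of Theorem \ref{12.25.T1}. The constant depends only on $\alpha,R,N,T,\Omega$, since $\gamma$ and $s$ were fixed in terms of these. No genuine obstacle is expected beyond making sure $s$ is fixed, not taken large, so that the constants remain finite.
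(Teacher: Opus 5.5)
Your argument is correct and matches the paper's own proof (Step 6): restrict the left side of Proposition \ref{12.23.P1} to $(\frac{T}{4},\frac{3T}{4})$, where $\Theta e^{2s\xi_0}$ and $\Theta^3 e^{2s\xi_0}$ are bounded below. Then bound the right-hand weight $(1+\Theta^{5/4})e^{2s\xi_0}$ on all of $Q$ using $\xi_0\le-\gamma m^{2-\alpha}\Theta$ with $s$ fixed. One harmless slip: the minimum of $\Theta$ is $(T^2/4)^{-4}=256/T^8$, not $16/T^4$, but you only use $\Theta>0$ at that point.
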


\begin{remark}
	 For the approximate theorem concerning degenerate elliptic equations and parabolic equations, selecting appropriate approximate functions is crucial. In \cite{Wu1}, we opted for a simple approximate function. Now, we introduce another approximate function that possesses higher-order partial derivatives. These higher-order partial derivatives are pivotal for the Carleman estimate, as the computational process of the Carleman estimate necessitates them.
\end{remark}

\begin{lemma}\label{12.23.L1}
Under the assumptions in Proposition \ref{12.23.P1}, there exists a constant $C>0$ such that
\begin{equation*}
+\iint_{(B_{5R}\setminus B_{4R})\times (0,T)}e^{2s\xi_0}|\nabla \widehat{\varphi}_0|^2\,dx\,dt\leq C\iint_{(B_{6R}\setminus B_{3R})\times (0,T)}(1+
\Theta^{\frac{5}{4}})e^{2s\xi_0}\widehat{\varphi}_0^2\,dx\,dt.
\end{equation*}
\end{lemma}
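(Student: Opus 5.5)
We prove Lemma \ref{12.23.L1} with a weighted Caccioppoli (energy) argument for $\widehat{\varphi}_0$, which solves $\partial_t\widehat{\varphi}_0+\Div(w\nabla\widehat{\varphi}_0)=0$. The weight $e^{2s\xi_0}$ is carried through the computation, and the time derivative falling on it is absorbed using the bound \eqref{12.21.1} on $\Theta'$.

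\textbf{Step 1: cutoff.} Choose $\chi\in C^\infty(\mathbb R^N)$ with $0\le\chi\le1$, $\chi=1$ on $B_{5R}\setminus B_{4R}$, $\supp\chi\subset B_{6R}\setminus \overline{B_{3R}}$, and $|\nabla\chi|\le C/R$. On $\supp\chi$ we have $\psi=|x|\in[3R,6R]$. Hence $w=|x|^\alpha$ is bounded above and below there, the equation is uniformly parabolic, and $\nabla\xi_0=\gamma(2-\alpha)\Theta|x|^{-\alpha}x$ satisfies $|\nabla\xi_0|\le C\Theta$.

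\textbf{Step 2: energy identity.} Multiply the equation by $\chi^2e^{2s\xi_0}\widehat{\varphi}_0$ and integrate over $Q$. The time term is
\[
\iint\chi^2e^{2s\xi_0}\widehat{\varphi}_0\partial_t\widehat{\varphi}_0
=-s\iint\chi^2\xi_{0,t}e^{2s\xi_0}\widehat{\varphi}_0^2 .
\]
There are no boundary terms at $t=0,T$, because $e^{2s\xi_0}\to0$ there: indeed $\xi_0\le-\gamma m^{2-\alpha}\Theta\to-\infty$. The spatial term gives
\[
-\iint \chi^2e^{2s\xi_0}w|\nabla\widehat{\varphi}_0|^2-\iint w\widehat{\varphi}_0\nabla\widehat{\varphi}_0\cdot\nabla(\chi^2e^{2s\xi_0}).
\]
We estimate the cross term by Young's inequality, keeping half of the gradient term. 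This gives
\[
\iint\chi^2e^{2s\xi_0}|\nabla\widehat{\varphi}_0|^2\le C\iint_{(B_{6R}\setminus B_{3R})\times(0,T)}\big(|\nabla\chi|^2+s^2|\nabla\xi_0|^2\chi^2+s|\xi_{0,t}|\chi^2\big)e^{2s\xi_0}\widehat{\varphi}_0^2 .
\]

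\textbf{Step 3: absorbing the $\Theta$ powers.} Here $|\xi_{0,t}|\le C|\Theta'|\le C\Theta^{5/4}$ by \eqref{12.21.1}, and $|\nabla\xi_0|^2\le C\Theta^2$. The $\Theta^2$ factor exceeds the $\Theta^{5/4}$ in the statement, so we must control it. We use that $e^{2s\xi_0}$ decays like $e^{-cs\Theta}$. Split $e^{2s\xi_0}=e^{s\xi_0}\cdot e^{s\xi_0}$; since $\sup_{\Theta\geq 1}\Theta^{3/4}e^{-cs\Theta}<\infty$, this trades excess powers of $\Theta$ for a constant. One cost is a slightly smaller exponent on the right-hand side; alternatively, we accept an $s$-dependent constant $C$, which the statement allows (it only requires some $C>0$). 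Since $\Theta\ge(T/2)^{-8}$ is bounded below, all powers of $\Theta$ up to $5/4$ are dominated by $1+\Theta^{5/4}$. Restricting the left-hand side to $\chi=1$ then yields the claim.

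\textbf{Justification and main obstacle.} The formal multiplication needs care. I would carry out the computation for $\widehat{\varphi}_k$, which is smooth away from the origin by \eqref{12.25.1}, and note that $w_k=w$ on $\supp\chi$ once $1/k<3R$. Then pass to the limit using the strong convergence of $\widehat{\varphi}_k$ and $\nabla\widehat{\varphi}_k$ on sets away from $0$ (Corollary \ref{08.23.C1}), with lower semicontinuity on the left-hand side. The main obstacle is Step 3: handling the $s^2\Theta^2$ factor so that the final constant depends only on the allowed parameters. If $C$ must be independent of $s$, I would instead take $s$ fixed as in Theorem \ref{12.25.T1}, where $s$ is ultimately fixed.
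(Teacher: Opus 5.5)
Your Steps 1--2 are the paper's proof: the multiplier $\zeta^2e^{2s\xi_0}\widehat{\varphi}_0$, no boundary terms at $t=0,T$, Cauchy's inequality, and $w$ comparable to a constant on $B_{6R}\setminus B_{3R}$. The gap is Step 3, which does not prove the stated inequality.

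The cross term $2s\iint\zeta^2e^{2s\xi_0}\widehat{\varphi}_0\,w\nabla\widehat{\varphi}_0\cdot\nabla\xi_0$ forces a factor $s^2|\nabla\xi_0|^2$ on $\widehat{\varphi}_0^2$. On the annulus $|\nabla\xi_0|=\gamma(2-\alpha)\Theta|x|^{1-\alpha}\sim\Theta$, so this factor is of order $s^2\Theta^2$. Neither of your two fixes removes the surplus $\Theta^{3/4}$.
\begin{itemize}
\item Borrowing part of the exponential gives $\Theta^2e^{2s\xi_0}\le C_\delta\Theta^{5/4}e^{2(1-\delta)s\xi_0}$. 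Since $\xi_0\le-\gamma m^{2-\alpha}\Theta<0$, the weight $e^{2(1-\delta)s\xi_0}$ (or $e^{s\xi_0}$) is \emph{larger} than $e^{2s\xi_0}$. So you obtain a strictly weaker inequality, not the lemma.
\item Letting $C$ depend on $s$ does not help. With the same weight $e^{2s\xi_0}$ on both sides you would need $\Theta^{3/4}\le C(s)$ on $(0,T)$, which is false because $\Theta\to\infty$ as $t\to0$ and as $t\to T$.
\item Rebalancing Young's inequality does not help either. Any split that puts less than $\Theta^2$ on $\widehat{\varphi}_0^2$ puts a positive power of $\Theta$ on $|\nabla\widehat{\varphi}_0|^2$. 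The left-hand side of the energy identity carries no power of $\Theta$, so it cannot absorb that term.
\end{itemize}

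The paper's own proof does not supply a missing idea here. It is exactly your computation, and after ``applying the Cauchy inequality'' it simply asserts the factor $1+\Theta^{5/4}$ without treating the $\nabla\xi_0$ cross term. What the argument honestly yields is
\[
\iint_{(B_{5R}\setminus B_{4R})\times(0,T)}e^{2s\xi_0}|\nabla\widehat{\varphi}_0|^2\,dx\,dt\le C\iint_{(B_{6R}\setminus B_{3R})\times(0,T)}(1+s^2\Theta^2)e^{2s\xi_0}\widehat{\varphi}_0^2\,dx\,dt ,
\]
with $C$ independent of $s$. Here the time term contributes $s|\Theta'|\le Cs\Theta^{5/4}$, and $\Theta\ge (T/2)^{-8}$ lets it be absorbed into $1+s^2\Theta^2$.

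This weaker form is all the paper actually needs. Downstream, the factor in Proposition~\ref{12.23.P1} is only used through the bound $\Theta^3e^{2s\xi_0}\le C$ on $Q$ in Step 6 (leading to Theorem~\ref{12.25.T1}). That bound controls $(1+s^2\Theta^2)e^{2s\xi_0}$ just as well at fixed $s$. You should therefore state and prove the lemma with $1+s^2\Theta^2$ (or $1+\Theta^2$ for fixed $s$), rather than claim the exponent $5/4$.
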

 \begin{proof}
Select $\zeta \in C^{\infty}(\mathbb{R}^N)$ satisfying $0 \leq \zeta \leq 1$ such that
\begin{equation*}
\zeta = 1 \quad \text{on } B_{5R} \setminus B_{4R}, \quad \zeta = 0 \quad \text{on } (\Omega \setminus B_{6R}) \cup B_{3R}, \quad |\nabla \zeta| \leq \frac{C}{R} \quad \text{on } \mathbb{R}^N.
\end{equation*}
Define $h = e^{2s\xi_0}\zeta^2 \widehat{\varphi}_0$. Then $h(0) = h(T) = 0$, and
\begin{equation*}
\begin{split}
&\iint_Q \zeta^2 e^{2s\xi_0} w |\nabla \widehat{\varphi}_0|^2 \, dx \, dt + 2\iint_Q \zeta e^{2s\xi_0} \widehat{\varphi}_0 (w \nabla \widehat{\varphi}_0 \cdot \nabla \zeta) \, dx \, dt \\
&+ 2s\iint_Q \zeta^2 e^{2s\xi_0} \widehat{\varphi}_0 (w \nabla \widehat{\varphi}_0 \cdot \nabla \xi_0) \, dx \, dt + s\iint_Q \zeta^2 e^{2s\xi_0} \widehat{\varphi}_0^2 (\partial_t \xi_0) \, dx \, dt = -\iint_Q \zeta^2 e^{2s\xi_0} \widehat{\varphi}_0 f \, dx \, dt.
\end{split}
\end{equation*}
This equation is obtained by multiplying $h$ on both sides of \eqref{12.14.1}. Applying the Cauchy inequality, noting $(3R)^{2-\alpha} \leq w \leq (6R)^{2-\alpha}$, and $f = 0$, we obtain
\begin{equation*}
\begin{split}
&\iint_{(B_{5R} \setminus B_{4R}) \times (0,T)} e^{2s\xi_0} |\nabla \widehat{\varphi}_0|^2 \, dx \, dt \\
&\leq C\iint_Q \zeta^2 e^{2s\xi_0} w |\nabla \widehat{\varphi}_0|^2 \, dx \, dt \leq C\iint_{(B_{6R} \setminus B_{3R}) \times (0,T)} (1 + \Theta^{\frac{5}{4}}) e^{2s\xi_0} \widehat{\varphi}_0^2 \, dx \, dt.
\end{split}
\end{equation*}
This completes the proof of Lemma \ref{12.23.L1}.
\end{proof}

\section{Quantitative weak unique continuation}\label{Section 6}

  In this section, we begin by presenting Theorem \ref{04.18.T1g}. This theorem employs a cut-off method: by isolating the degenerate part, the remaining portion of equation \eqref{12.14.1} transforms into a non-degenerate parabolic equation, for which the classical Carleman estimate can then be established. Although the cut-off method has been previously demonstrated in \cite{Wu}, we include its proof in Section \ref{Section 5} as Appendix for the sake of completeness. It is important to note, however, that the effectiveness of the cut-off method hinges on the selection of an appropriate cut-off function. Finally, we introduce the principal result of this paper, Theorem \ref{04.18.T1-final}.

 \begin{theorem}\label{04.18.T1g}
 	Let $\widehat{\varphi}_0$ be a solution of \eqref{12.15.1} with $g = 0$. Then, there exist constants $C = C(\alpha, R, N, T, \Omega) > 0$ such that
 	\begin{equation*}
 		\begin{split}
 			&\iint_{(\Omega \setminus B_{5R}) \times \left(\frac{T}{4}, \frac{3T}{4}\right)} |x|^\alpha |\nabla\widehat{\varphi}_0|^2 \, dx \, dt + \iint_{(\Omega \setminus B_{5R}) \times \left(\frac{T}{4}, \frac{3T}{4}\right)} |x|^{2-\alpha} |\widehat{\varphi}_0|^2 \, dx \, dt\\
 			&\leq C \iint_{(B_{6R} \setminus B_{3R}) \times (0,T)} \widehat{\varphi}_0^2 \, dx \, dt.
 		\end{split}
 	\end{equation*}
 \end{theorem}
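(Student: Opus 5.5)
Theorem \ref{04.18.T1g} follows from the cut-off method described at the start of this section. Away from the origin the equation is uniformly parabolic, so the degenerate interior point can be excised and a classical Carleman estimate used on the region outside $B_{4R}$. Fix $\chi\in C^\infty(\R^N)$ with $0\le\chi\le 1$, $\chi=0$ on $B_{4R}$, $\chi=1$ on $\R^N\setminus B_{5R}$, and $|\nabla\chi|\le C/R$, $|D^2\chi|\le C/R^2$. Set $z=\chi\widehat{\varphi}_0$. Then $z$ solves
\[
\partial_t z+\Div(w\nabla z)=2w\nabla\chi\cdot\nabla\widehat{\varphi}_0+\widehat{\varphi}_0\Div(w\nabla\chi)=:G \quad \text{in } Q,
\]
with $z=0$ on $\partial\Omega\times(0,T)$ and also on $\partial B_{4R}\times(0,T)$. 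Moreover $\supp G\subset (B_{5R}\setminus B_{4R})\times(0,T)$. On $\Omega\setminus B_{4R}$ the coefficient $w=|x|^\alpha$ is smooth and satisfies $(4R)^\alpha\le w\le m^\alpha$, so $z$ solves a uniformly parabolic backward equation in the non-degenerate domain $D:=\Omega\setminus \overline{B_{4R}}$, with zero lateral data on $\partial D$.

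The main step is a classical Fursikov--Imanuvilov Carleman estimate for $\partial_t+\Div(w\nabla\cdot)$ on $D\times(0,T)$. I would use the weight $\alpha_0(x,t)=\Theta(t)(e^{\lambda\beta(x)}-e^{2\lambda\|\beta\|_\infty})$, where $\beta\in C^2(\overline D)$ is positive in $D$, vanishes on $\partial D$, and has $|\nabla\beta|>0$ outside an open set $\omega_0\Subset (B_{6R}\setminus B_{3R})\cap D$. For $\lambda,s$ large this gives
\[
\iint_{D\times(0,T)}\big(s\lambda^2\Theta|\nabla z|^2+s^3\lambda^4\Theta^3 z^2\big)e^{2s\alpha_0}\,dx\,dt\le C\iint e^{2s\alpha_0}G^2\,dx\,dt+Cs^3\lambda^4\iint_{\omega_0\times(0,T)}\Theta^3 e^{2s\alpha_0}z^2\,dx\,dt .
\]
I would place $\omega_0$ inside $B_{5R}\setminus B_{4R}$ together with a thin collar, so that the observation term is controlled by $\iint_{(B_{6R}\setminus B_{3R})\times(0,T)}\widehat{\varphi}_0^2$. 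Since this is the standard non-degenerate estimate, the paper defers its detailed proof to the Appendix, Section \ref{Section 5}.

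Next I would bound the source. We have $|G|^2\le C(|\nabla\widehat{\varphi}_0|^2+\widehat{\varphi}_0^2)$ on $(B_{5R}\setminus B_{4R})\times(0,T)$, and the gradient part is handled by the Caccioppoli inequality of Lemma \ref{12.23.L1}. Its proof works verbatim with the weight $e^{2s\alpha_0}$ in place of $e^{2s\xi_0}$: test with $e^{2s\alpha_0}\zeta^2\widehat{\varphi}_0$, and use $|\partial_t\alpha_0|\le C\Theta^{5/4}$ and $\Theta e^{2s\alpha_0}\le C$. This bounds the gradient part by $C\iint_{(B_{6R}\setminus B_{3R})\times(0,T)}(1+\Theta^{5/4})e^{2s\alpha_0}\widehat{\varphi}_0^2$, which in turn is at most $C\iint_{(B_{6R}\setminus B_{3R})\times(0,T)}\widehat{\varphi}_0^2$. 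Finally, restrict the left side to $(\Omega\setminus B_{5R})\times(T/4,3T/4)$. There $z=\widehat{\varphi}_0$, $\Theta^k e^{2s\alpha_0}\ge c>0$, and $|x|^\alpha$ and $|x|^{2-\alpha}$ are bounded above and below. Fixing $s$ and $\lambda$ then gives the claimed inequality.

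The main obstacle is the geometry of the Carleman weight. The observation region must sit where the data is known, namely $B_{6R}\setminus B_{3R}$, which meets the inner boundary $\partial B_{4R}$ of $D$. We also need $\beta$ to have no critical points outside $\omega_0$. I would build $\beta$ radially, increasing in $|x|$ away from $\partial B_{4R}$ and decreasing near $\partial\Omega$. Its critical set is the interior maximum sphere, and when $\Omega$ is the annulus $A_{3R,6R}$ that sphere can be chosen inside $B_{6R}\setminus B_{5R}$, which lies in the observation region. Boundary terms on $\partial D$ vanish or have a favourable sign because $\partial_\nu\beta\le0$ there. I would verify this sign condition carefully, since the lateral boundary terms cannot be absorbed otherwise.
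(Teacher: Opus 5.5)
Your proposal is correct and is essentially the paper's own argument (Appendix, Section \ref{Section 5}). You cut off the degenerate core with a function vanishing on $B_{4R}$ and equal to $1$ outside $B_{5R}$, apply the classical Fursikov--Imanuvilov Carleman estimate on the uniformly parabolic region $\Omega\setminus\overline{B_{4R}}$ with the weight's critical points inside the observation annulus, control the gradient part of the commutator source by a weighted Caccioppoli inequality as in Lemma \ref{12.23.L1}, and restrict to $(\Omega\setminus B_{5R})\times(\frac{T}{4},\frac{3T}{4})$. One small correction: $\Omega$ is a general $C^2$ domain containing $\overline{B_{8R}}$, and the annulus $A_{3R,6R}$ is the observation set $\omega$, so you should take $\beta$ from the Fursikov--Imanuvilov lemma (Lemma \ref{12.23.L2}) on the connected set $\Omega\setminus\overline{B_{4R}}$ rather than build it radially; also, the Caccioppoli step produces an extra factor $s^2|\nabla\alpha_0|^2\sim\Theta^2$, which is harmless because $\Theta^k e^{2s\alpha_0}$ is bounded for every $k$.
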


By integrating $\omega = A_{3R,6R}$ with Theorem \ref{12.25.T1} and Theorem \ref{04.18.T1g}, we deduce the following Theorem \ref{03.01.T1}.

\begin{theorem}\label{03.01.T1}
Let $\widehat{\varphi}_0$ be a solution of \eqref{12.15.1} with $g = 0$. Then, there exists a constant $C = C(\alpha, R, N, T, \Omega) > 0$ such that
\begin{equation*}
\iint_{\Omega \times \left(\frac{T}{4}, \frac{3T}{4}\right)} |x|^{2-\alpha} |\widehat{\varphi}_0|^2 \, dx \, dt \leq C \iint_{\omega \times (0,T)} |\widehat{\varphi}_0|^2 \, dx \, dt.
\end{equation*}
\end{theorem}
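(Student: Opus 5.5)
Theorem \ref{03.01.T1} follows by splitting $\Omega$ into the region near the origin and the region away from it, and summing Theorem \ref{12.25.T1} and Theorem \ref{04.18.T1g}. By the geometric convention in Section \ref{Section 2}, the observation annulus is $\omega = A_{3R,6R} = B_{6R}\setminus \overline{B_{3R}}$, and the right-hand sides of both theorems are integrals of $\widehat{\varphi}_0^2$ over $(B_{6R}\setminus B_{3R})\times(0,T)$. Up to the null set $\partial B_{3R}\cup\partial B_{6R}$, this is exactly $\omega\times(0,T)$. So both theorems already bound their left-hand sides by the same quantity $C\iint_{\omega\times(0,T)}|\widehat{\varphi}_0|^2\,dx\,dt$.

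The key covering step is the decomposition
\begin{equation*}
\Omega \subset B_{4R}\cup(\Omega\setminus B_{5R})\cup\bigl(\overline{B_{5R}}\setminus B_{4R}\bigr).
\end{equation*}
Theorem \ref{12.25.T1} controls $\iint_{B_{4R}\times(\frac T4,\frac{3T}4)}|x|^{2-\alpha}|\widehat{\varphi}_0|^2$. Theorem \ref{04.18.T1g} controls $\iint_{(\Omega\setminus B_{5R})\times(\frac T4,\frac{3T}4)}|x|^{2-\alpha}|\widehat{\varphi}_0|^2$.

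The middle shell $B_{5R}\setminus B_{4R}$ lies inside $\omega$. On it, $|x|^{2-\alpha}\le (5R)^{2-\alpha}$, so
\begin{equation*}
\iint_{(B_{5R}\setminus B_{4R})\times(\frac T4,\frac{3T}4)}|x|^{2-\alpha}|\widehat{\varphi}_0|^2\,dx\,dt\le (5R)^{2-\alpha}\iint_{\omega\times(0,T)}|\widehat{\varphi}_0|^2\,dx\,dt.
\end{equation*}
Adding the three estimates and discarding the nonnegative gradient terms on the left gives the claim, with constant $C=C_1+C_2+(5R)^{2-\alpha}$, where $C_1,C_2$ are the constants of the two theorems. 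This depends only on $\alpha,R,N,T,\Omega$.

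No step here is a genuine obstacle, since all the analytic work sits in Theorems \ref{12.25.T1} and \ref{04.18.T1g}. The only point needing care is bookkeeping: check that the pieces cover $\Omega$ up to measure zero, and that the right-hand sides of both theorems are integrals over the same annulus $\omega$, closed or open makes no difference.
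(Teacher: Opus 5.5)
Your proposal is correct and matches the paper's argument: Theorem \ref{03.01.T1} is obtained by combining Theorem \ref{12.25.T1} (the region $B_{4R}$) with Theorem \ref{04.18.T1g} (the region $\Omega\setminus B_{5R}$), both controlled by the integral over $\omega=A_{3R,6R}$. The shell $B_{5R}\setminus B_{4R}\subset\omega$ is absorbed by the trivial bound $(5R)^{2-\alpha}$; the paper leaves this step implicit and you write it out explicitly.
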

It is noted that the aforementioned results rely on the terminal data $u_T \in H_0^1(\Omega) \cap H^3(\Omega)$. For each $u_T \in L^2(\Omega)$, since $C_0^\infty(\Omega)$ is dense in $L^2(\Omega)$, there exists a sequence $\{u_T^n\}_{n \in \mathbb{N}} \in H_0^1(\Omega) \cap H^3(\Omega)$ with $\Div(w_k \nabla u_T^n) \in H_0^1(\Omega)$ for all $k \in \mathbb{N}$ (refer to Part 2 in Corollary \ref{08.23.C1}), such that
\begin{equation*}
u_T^n \to u_T   \text{ in } L^2(\Omega).
\end{equation*}
Let $\widehat{\varphi}_n$ be the solution of \eqref{12.15.1} with $\widehat{\varphi}_n(T) = u_T^n$ and $g = 0$, and $\widehat{\varphi}_0$ be the solution of \eqref{12.15.1} with $\widehat{\varphi}_0(T) = u_T \in L^2(\Omega)$. Denote $\widehat{\psi}_n = \widehat{\varphi}_n - \widehat{\varphi}_0$. Then, $\widehat{\psi}_n$ is the solution of the following equation:
\begin{equation*}
\begin{cases}
\partial_t \widehat{\psi}_n + \Div(\nabla \widehat{\psi}_n) = 0, &\text{ in } Q, \\
\widehat{\psi}_n = 0, &\text{ on } \partial Q, \\
\widehat{\psi}_n(T) = \widehat{\varphi}_n(T) - \widehat{\varphi}_0(T), &\text{ in } \Omega,
\end{cases}
\end{equation*}
and we have the following estimate:
\begin{equation*}
\|\widehat{\psi}_n\|{L^2(Q)} \leq \sqrt{T} \|u_T^n - u_T\|_{L^2(\Omega)} \to 0   \text{ as } n \to \infty
\end{equation*}
by a straightforward computation and Corollary \ref{12.11.C1}.
Since $|x|^{2-\alpha}$ is a bounded continuous function on $\Omega$, by applying Theorem \ref{03.01.T1} once more, we obtain the following Theorem \ref{04.18.T1-2}.
\begin{theorem}\label{04.18.T1-2}
Let $\widehat{\varphi}_0$ be a solution of \eqref{12.14.1}. Then, there exists a constant $C = C(\alpha, R, N, T, \Omega) > 0$ such that
\begin{equation}\label{guo1} 
\iint_{\Omega \times \left(\frac{T}{4}, \frac{3T}{4}\right)} |x|^{2-\alpha} |\widehat{\varphi}_0|^2 \, dx \, dt \leq C \iint_{\omega \times (0,T)} |\widehat{\varphi}_0|^2 \, dx \, dt.
\end{equation}
\end{theorem}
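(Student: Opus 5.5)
The plan is a density argument. We first establish \eqref{guo1} for regular terminal data via Theorem \ref{03.01.T1}, then pass to the limit in $L^2(Q)$ on both sides.

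\textbf{Step 1.} Fix $u_T=\varphi_T\in L^2(\Omega)$ and choose $u_T^n\in C_0^\infty(\Omega)$ with $u_T^n\to u_T$ in $L^2(\Omega)$. Since $w_k$ is smooth away from the origin and $\psi_k$ is $C^{2,1}$, we may ask that the supports avoid a neighbourhood of $\partial\Omega$. Then $u_T^n\in H_0^1(\Omega)\cap H^3(\Omega)$, and for $k$ large $\Div(w_k\nabla u_T^n)\in H_0^1(\Omega)$. So the hypotheses of Corollary \ref{08.23.C1} and of Theorem \ref{03.01.T1} hold. Let $\widehat\varphi_n$ be the corresponding solution of \eqref{12.14.1}. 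Theorem \ref{03.01.T1} gives
\begin{equation*}
\iint_{\Omega\times(\frac T4,\frac{3T}4)}|x|^{2-\alpha}|\widehat\varphi_n|^2\,dx\,dt\leq C\iint_{\omega\times(0,T)}|\widehat\varphi_n|^2\,dx\,dt,
\end{equation*}
with $C=C(\alpha,R,N,T,\Omega)$ independent of $n$. This uniformity is essential, and it is inherited from Theorems \ref{12.25.T1} and \ref{04.18.T1g}.

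\textbf{Step 2.} Set $\widehat\psi_n=\widehat\varphi_n-\widehat\varphi_0$. By linearity it solves the backward degenerate problem with terminal data $u_T^n-u_T$. Reversing time $t\mapsto T-t$ turns it into the forward problem of Lemma \ref{12.09.L1} with $g=f_i=0$. Corollary \ref{12.11.C1} then gives $\|\widehat\psi_n(t)\|_{L^2(\Omega)}\le\|u_T^n-u_T\|_{L^2(\Omega)}$ for every $t$. Hence $\|\widehat\psi_n\|_{L^2(Q)}\le\sqrt T\|u_T^n-u_T\|_{L^2(\Omega)}\to0$, i.e.\ $\widehat\varphi_n\to\widehat\varphi_0$ in $L^2(Q)$.

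\textbf{Step 3.} Pass to the limit. Since $0\le|x|^{2-\alpha}\le m^{2-\alpha}$ on $\Omega$, the left side is a continuous quadratic form on $L^2(Q)$. Concretely,
\[
\Bigl|\,\|\,|x|^{1-\frac\alpha2}\widehat\varphi_n\|-\|\,|x|^{1-\frac\alpha2}\widehat\varphi_0\|\,\Bigr|\le m^{1-\frac\alpha2}\|\widehat\psi_n\|_{L^2(Q)}.
\]
The right side restricted to $\omega\times(0,T)$ converges for the same reason. Letting $n\to\infty$ in the inequality of Step 1 yields \eqref{guo1} with the same constant.

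\textbf{Main obstacle.} The delicate point is Step 1, namely ensuring that the approximating terminal data satisfy all the regularity and compatibility conditions used in Corollary \ref{08.23.C1} and Theorem \ref{12.23.T1}, uniformly in $k$. The condition $\Div(w_k\nabla u_T^n)\in H_0^1$ is the one to watch, and I would secure it by taking $u_T^n$ compactly supported in $\Omega$. Here $\Omega$ is the domain containing the origin with the degeneracy inside, so we need $\Div(w_k\nabla u_T^n)$ to lie in $H^1$ near $0$. This follows from $\psi_k\in C^{2,1}$, which gives $w_k\in W^{2,\infty}$. One must also check that the constant in Theorem \ref{03.01.T1} does not depend on the terminal data, which holds because it comes only from the Carleman weights.
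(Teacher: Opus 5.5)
Your proposal is correct and follows the paper's own argument. You approximate $u_T$ in $L^2(\Omega)$ by regular terminal data to which Theorem \ref{03.01.T1} applies with an $n$-independent constant. You then use Corollary \ref{12.11.C1}, after time reversal, to get $\|\widehat\varphi_n-\widehat\varphi_0\|_{L^2(Q)}\le\sqrt T\,\|u_T^n-u_T\|_{L^2(\Omega)}$, and pass to the limit on both sides using the boundedness of $|x|^{2-\alpha}$ on $\Omega$. Your choice $u_T^n\in C_0^\infty(\Omega)$, together with $w_k\in W^{2,\infty}$, justifies the condition $\Div(w_k\nabla u_T^n)\in H_0^1(\Omega)$ more explicitly than the paper does; it holds for every $k$, not only for $k$ large.
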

 From Theorem \ref{04.18.T1-2}, we can directly deduce a \emph{Quantitative} weak unique continuation property: should $\widehat{\varphi}_0$ vanish on $\omega \times (0,T)$, then,
\begin{equation*}
\iint_{\Omega \times \left(\frac{T}{4}, \frac{3T}{4}\right)} |x|^{2-\alpha} |\widehat{\varphi}_0|^2 \, dx \, dt = 0,
\end{equation*}
i.e., $\widehat{\varphi}_0 = 0$ on $\Omega \times \left(\frac{T}{4}, \frac{3T}{4}\right)$. By Corollary \ref{12.11.C1}, we obtain
\begin{equation}\label{guo2} 
\int_\Omega |\widehat{\varphi}_0|^2(0) \, dx \leq \frac{2}{T} \iint_{\Omega \times \left(\frac{T}{4}, \frac{3T}{4}\right)} |\widehat{\varphi}_0|^2(t) \, dx \, dt = 0,
\end{equation}
i.e., $\widehat{\varphi}_0(0) = 0$ on $\Omega$. This leads to the following Corollary \ref{04.18.T1-final}. 

\begin{corollary}\label{04.18.T1-final}
Let $\widehat{\varphi}_0$ be a solution of \eqref{12.14.1}. Then, the solution $\widehat{\varphi}_0$ exhibits the weak unique continuation property. That is, if $\widehat{\varphi}_0 = 0$ on $\omega \times (0,T)$, then $\widehat{\varphi}_0(0) = 0$ on $\Omega$.
\end{corollary}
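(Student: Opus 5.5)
The corollary follows from the quantitative estimate \eqref{guo1} of Theorem \ref{04.18.T1-2} together with the monotonicity of the $L^2$ norm in Corollary \ref{12.11.C1}. The plan has three steps.

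\emph{Step 1: vanishing on the middle time interval.} Assume $\widehat{\varphi}_0=0$ on $\omega\times(0,T)$. Then the right-hand side of \eqref{guo1} is zero, so
\begin{equation*}
\iint_{\Omega\times(\frac{T}{4},\frac{3T}{4})}|x|^{2-\alpha}|\widehat{\varphi}_0|^2\,dx\,dt=0 .
\end{equation*}
Since $\alpha\in(0,2)$, the weight $|x|^{2-\alpha}$ vanishes only at the origin, a set of measure zero. Hence $\widehat{\varphi}_0=0$ almost everywhere in $\Omega\times(\frac{T}{4},\frac{3T}{4})$. Because $\widehat{\varphi}_0\in W\subset C([0,T];L^2(\Omega))$, this gives $\widehat{\varphi}_0(t)=0$ in $L^2(\Omega)$ for every $t\in[\frac{T}{4},\frac{3T}{4}]$.

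\emph{Step 2: propagation to $t=0$.} The equation \eqref{12.14.1} is backward. Under $t\mapsto T-t$ it becomes the forward problem of Lemma \ref{12.09.L1} with $g=f_i=0$. Corollary \ref{12.11.C1}, read in the original time variable, then says that $t\mapsto\|\widehat{\varphi}_0(t)\|_{L^2(\Omega)}$ is nondecreasing. This is the energy identity obtained by testing the equation with $\widehat{\varphi}_0$, since the weighted Dirichlet term has a sign. Therefore
\begin{equation*}
\|\widehat{\varphi}_0(0)\|_{L^2(\Omega)}^2\le \|\widehat{\varphi}_0(t)\|_{L^2(\Omega)}^2=0\quad\text{for } t\in(\tfrac{T}{4},\tfrac{3T}{4}).
\end{equation*}
Averaging over $t$ gives \eqref{guo2}, and so $\widehat{\varphi}_0(0)=0$ on $\Omega$.

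\emph{Main obstacle.} All the real difficulty is inside Theorem \ref{04.18.T1-2}, which is taken as given here. The step needing care in the corollary itself is the direction of time. The monotonicity must be applied after reversing time, so that the norm at $t=0$ is bounded by the norm at later times, which is the direction available for a backward problem. It must not be used to bound the later-time norm by the one at $t=0$. The other point to check is that the time-continuity in $L^2(\Omega)$, guaranteed by the inclusion $W\subset C([0,T];L^2(\Omega))$, is what lets us move from vanishing almost everywhere in $(x,t)$ to vanishing of the trace at a fixed time.
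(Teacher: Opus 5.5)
Your proposal is correct and follows essentially the same route as the paper: the quantitative estimate \eqref{guo1} forces $\widehat{\varphi}_0=0$ on $\Omega\times(\frac{T}{4},\frac{3T}{4})$, and the $L^2$-monotonicity of Corollary \ref{12.11.C1}, applied in reversed time, then yields \eqref{guo2} and hence $\widehat{\varphi}_0(0)=0$. Your remarks on the direction of time and on using $W\subset C([0,T];L^2(\Omega))$ only make explicit what the paper leaves implicit.
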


\section{Appendix: Carleman estimate for the cut-off parabolic equation}\label{Section 5}

In this section, we begin by performing some computations, with the objective of establishing Theorem \ref{04.18.T1g}. We note that while the Carleman estimate for parabolic equations in this section is standard, a straightforward yet effective cut-off method (see \cite{Wu}) is required. Under an appropriate cut-off, degenerate elliptic and parabolic equations exhibit unique continuation properties; moreover, controllability holds for degenerate parabolic equations. Consequently, we rederive this estimate.

We select $\kappa \in C^\infty(\mathbb{R}^N)$, where $0 \leq \kappa \leq 1$, such that
\begin{equation*}
	\kappa = 1 \text{  on } \mathbb{R}^N \setminus B_{5R},   \kappa = \nabla \kappa = 0 \text{  on } B_{4R},   |\nabla \kappa| \leq \frac{C}{R} \text{  and } \left|\frac{\partial^2 \kappa}{\partial x_i \partial x_j}\right| \leq \frac{C}{R^2} \text{  on } \mathbb{R}^N,
\end{equation*}
and noting that $\widehat{\varphi}_0$ is the solution of \eqref{12.15.1} with $g=0$, then $\overline{\varphi}_0 = \kappa \widehat{\varphi}_0$ is a solution of the following system:
\begin{equation}\label{12.24.1}
	\begin{cases}
		\partial_t \overline{\varphi}_0 + \text{ Div}(w \nabla \overline{\varphi}_0) = 2w \nabla \kappa \cdot \nabla \widehat{\varphi}_0 + \widehat{\varphi}_0 \text{ Div}(w \nabla \kappa), &\text{ in } (\Omega \setminus B_{4R}) \times (0,T),\\
		\overline{\varphi}_0 = 0, &\text{ on } (\partial \Omega \cup \partial B{4R}) \times (0,T),\\
		\overline{\varphi}_0(T) = \kappa \varphi_T, &\text{ in } \Omega \setminus B_{4R},
	\end{cases}
\end{equation}
which is a uniformly parabolic equation. Denote $g = 2w \nabla \kappa \cdot \nabla \widehat{\varphi}_0 + \widehat{\varphi}_0 \text{ Div}(w \nabla \kappa)$.
\begin{lemma}(\cite[lemma 1.2]{FG})\label{12.23.L2}
	There exists $\overline{\eta} \in C^2(\overline{\Omega})$ such that $\overline{\eta} > 0$ in $\Omega$, $\overline{\eta} = 0$ on $\partial \Omega \cup \partial B_{4R}$, and $|\nabla \overline{\eta}| > 0$ in $\overline{\Omega \setminus B_{5R}}$.
\end{lemma}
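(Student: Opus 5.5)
The plan is the classical construction of Fursikov--Imanuvilov, adapted to the domain $D:=\Omega\setminus\overline{B_{4R}}$. This domain has $C^2$ boundary $\partial\Omega\cup\partial B_{4R}$. We read the lemma with $\overline\eta$ defined on $\overline D$; it may be extended by zero inside $B_{4R}$ if needed. Set $\omega_0:=B_{5R}\setminus\overline{B_{4R}}$. The goal is a $C^2$ function, positive in $D$ and vanishing on $\partial D$, all of whose critical points lie in the open set $\omega_0$. Then $|\nabla\overline\eta|>0$ on the compact set $\overline{\Omega\setminus B_{5R}}$, which is what is claimed. We assume, as in the setting of the paper, that $\Omega$ is connected and $B_{8R}\subset\Omega$. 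Since $N\ge 2$, removing the ball $\overline{B_{4R}}$ keeps $D$ connected; this connectedness is used in the third step.

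First step: a good starting function. Let $\eta_0$ be the first Dirichlet eigenfunction of $-\Delta$ on $D$, normalized to be positive. Elliptic regularity gives $\eta_0\in C^2(\overline D)$, and $\eta_0>0$ in $D$. By the Hopf lemma, $\partial_\nu\eta_0<0$ on $\partial D$. Hence $|\nabla\eta_0|\ge c>0$ in a collar $D_\delta=\{x\in D:\dist(x,\partial D)<\delta\}$.

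Second step: reduction to finitely many nondegenerate critical points. Take $\chi\in C_0^\infty(D)$ with $\chi=1$ on $D\setminus D_{\delta}$. Perturb $\eta_0$ to $\eta_1=\eta_0+\chi\,\ell$, where $\ell(x)=a\cdot x$ and $a\in\R^N$ is small and generic. By Sard's theorem applied to $\nabla\eta_0$, for a.e. small $a$ the function $\eta_1$ is Morse on $D\setminus D_\delta$. If $|a|$ is small enough that $|a|\,\|\chi\|_{C^1}<c/2$ and $\eta_1>0$ on $\operatorname{supp}\chi$, then positivity and the boundary behaviour are kept, and no critical points appear in the collar. Nondegenerate critical points are isolated, so the compact set $\overline{D\setminus D_\delta}$ contains only finitely many of them, say $x_1,\dots,x_m$.

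Third step: moving the critical points into $\omega_0$, which is the main obstacle. Choose distinct targets $y_1,\dots,y_m\in\omega_0$, for instance with $4R<|y_j|<5R$ strictly, so that they stay away from $\overline{\Omega\setminus B_{5R}}$. Because $D$ is connected and $N\ge2$, we can join $x_j$ to $y_j$ by a smooth embedded path in $D\setminus\overline{D_{\delta/2}}$. The path must avoid the other current points $x_i$ and the targets $y_i$ already placed; this is possible since removing finitely many points from an open connected set of dimension at least $2$ keeps it path-connected. Let $V_j$ be a smooth vector field with compact support in a thin tube around the $j$-th path that is tangent to the path there. The time-one flow $\Phi_j$ is a $C^\infty$ diffeomorphism of $\overline D$ equal to the identity near $\partial D$, with $\Phi_j(x_j)=y_j$ and $\Phi_j(x_i)=x_i$ for $i\ne j$, provided the tubes are thin enough. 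Compose $\Phi=\Phi_m\circ\cdots\circ\Phi_1$ and set $\overline\eta=\eta_1\circ\Phi^{-1}$.

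Finally, $\overline\eta\in C^2(\overline D)$ and $\overline\eta>0$ in $D$. Also $\overline\eta=\eta_1=0$ on $\partial D$, since $\Phi$ is the identity near $\partial D$. Because $\Phi$ is a diffeomorphism, $\nabla\overline\eta(y)=0$ exactly when $\nabla\eta_1(\Phi^{-1}y)=0$. So the critical set of $\overline\eta$ is $\{y_1,\dots,y_m\}\subset\omega_0$. Hence $|\nabla\overline\eta|>0$ on $\overline{\Omega\setminus B_{5R}}$.

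The delicate point is the simultaneous, non-interfering choice of paths and tubes in the third step. If this becomes cumbersome, we will move the points one at a time instead: at each stage, take a tube disjoint from all remaining critical points and from the targets already placed. This is again possible because the points form a finite set and $N\ge 2$.
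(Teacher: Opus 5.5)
Your route is the classical Fursikov--Imanuvilov construction that the paper relies on; the paper gives no proof of its own, only the citation. You start from a positive defining function, make a Morse perturbation that leaves finitely many nondegenerate critical points, and use diffeomorphisms equal to the identity near $\partial D$ to carry those points into $B_{5R}\setminus\overline{B_{4R}}$. Steps 2 and 3 are sound.

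\textbf{The gap is in Step 1.} You claim that elliptic regularity puts the first Dirichlet eigenfunction in $C^2(\overline D)$. The paper only assumes $\partial\Omega\in C^2$, so $\partial D$ is only $C^2$. Schauder theory needs a $C^{2,\beta}$ boundary. With a $C^2$ boundary, the $L^p$ theory gives $\eta_0\in W^{2,p}(D)$ for every $p<\infty$, hence only $\eta_0\in C^{1,\beta}(\overline D)$. $C^2$ regularity up to the boundary can genuinely fail when the boundary curvature is continuous but not Dini continuous.

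This cannot be sidestepped later. Since $\overline\eta=\eta_1=\eta_0$ near $\partial D$, the boundary regularity of $\overline\eta$ is exactly that of $\eta_0$, and the lemma asks for $C^2$.

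\textbf{The fix avoids PDE entirely.} The distance $d(x)=\dist(x,\partial D)$ is $C^2$ on a collar $\{x\in\overline D: d(x)<\mu\}$ of a compact $C^2$ boundary (Gilbarg--Trudinger, Lemma 14.16). Take $\theta\in C^\infty([0,\infty))$ with $\theta(t)=t$ for $t\le\mu/3$, $\theta>0$ on $(0,\infty)$, and $\theta$ constant on $[2\mu/3,\infty)$, and set $\eta_0=\theta(d)$. Then $\eta_0\in C^2(\overline D)$, $\eta_0>0$ in $D$, $\eta_0=0$ on $\partial D$, and $|\nabla\eta_0|=1$ on $\{d<\mu/3\}$. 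This replaces the Hopf lemma, and your Steps 2 and 3 go through verbatim with $\delta=\mu/3$; Sard only needs $\nabla\eta_0\in C^1$ in the interior.

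\textbf{Two smaller points.}

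First, you require the paths to lie in $D\setminus\overline{D_{\delta/2}}$ and justify this by the connectedness of $D$. That set is only guaranteed to be connected once $\delta$ is below the width of a tubular neighbourhood of $\partial D$. The restriction is unnecessary anyway: a thin tube around any compact embedded path in $D$ is compactly contained in $D$, so $\Phi$ is still the identity near $\partial D$.

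Second, you are right that the lemma must be read on $\overline D$. Taken literally it is contradictory, since $\partial B_{4R}\subset\Omega$ while $\overline\eta>0$ in $\Omega$. But drop the remark about extending by zero into $B_{4R}$: your function has $\partial_\nu\overline\eta\neq 0$ on $\partial B_{4R}$, so the zero extension is not even $C^1$.
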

Let $\Theta(t) = \frac{1}{[t(T-t)]^4}$, and define $|\overline{\eta}|\infty = \text{esssup}{x \in \Omega} \overline{\eta}(x)$. Then, we have
$$
\overline{\xi}(x,t) = \Theta(t) e^{\lambda(8|\overline{\eta}|_\infty + \overline{\eta}(x))}, \quad \overline{\sigma}(x,t) = \Theta(t) e^{10\lambda|\overline{\eta}|_\infty} - \overline{\xi}(x,t).
$$
Let $\overline{Q} = (\Omega \setminus B_R) \times (0,T)$, and
$$
\overline{v} = e^{-s\overline{\sigma}} \overline{\varphi}_0 \quad \iff \quad \overline{\varphi}_0 = e^{s\overline{\sigma}} \overline{v}.
$$
By direct computation, we obtain
$$
\begin{aligned}
	e^{-s\overline{\sigma}}g
	&= \partial_t \overline{v} + 2s w \nabla \overline{v} \cdot \nabla \overline{\sigma} + s \overline{v} \text{Div}(w \nabla \overline{\sigma}) \\
	&\quad + \text{Div}(w \nabla \overline{v}) + s \overline{v} \overline{\sigma}_t + s^2 \overline{v} w \nabla \overline{\sigma} \cdot \nabla \overline{\sigma}.
\end{aligned}
$$
This implies:
$$
\overline{g} = \overline{P}_1 \overline{v} + \overline{P}_2 \overline{v},
$$
where
$$
\begin{aligned}
	\overline{g} := \sum_{i=1}^3 \overline{g}_i
	&= e^{-s\overline{\sigma}}g + s\lambda \overline{\xi} \overline{v} \text{Div}(w \nabla \overline{\eta}) - s\lambda^2 \overline{\xi} \overline{v} w \nabla \overline{\eta} \cdot \nabla \overline{\eta}, \\
	\overline{P}_1 \overline{v} := \sum_{i=1}^3 P_{1i} \overline{v}
	&= \partial_t \overline{v} - 2s\lambda \overline{\xi} w \nabla \overline{v} \cdot \nabla \overline{\eta} - 2s\lambda^2 \overline{\xi} \overline{v} w \nabla \overline{\eta} \cdot \nabla \overline{\eta}, \\
	\overline{P}_2 \overline{v} := \sum_{i=1}^3 \overline{P}_{2i} \overline{v}
	&= \text{Div}(w \nabla \overline{v}) + s \overline{v} \overline{\sigma}_t + s^2 \lambda^2 \overline{\xi}^2 \overline{v} w \nabla \overline{\eta} \cdot \nabla \overline{\eta}.
\end{aligned}
$$
These results follow from
$$
\nabla \overline{\sigma} = -\nabla \overline{\xi} = -\lambda \overline{\xi} \nabla \overline{\eta}, \quad \text{Div}(w \nabla \overline{\sigma}) = -\lambda \overline{\xi} \text{Div}(w \nabla \overline{\eta}) - \lambda^2 \overline{\xi} w \nabla \overline{\eta} \cdot \nabla \overline{\eta}.
$$

It can be readily confirmed that:
a) $\overline{v}$ and its first-order partial derivative with respect to $x_i$, denoted as $\frac{\partial \overline{v}}{\partial x_i}$, both vanish in $L^2(\Omega \setminus B_R)$ at the time instances $t=0$ and $t=T$;
b) $\overline{v}$ equals zero on the boundary $\partial \overline{Q}$, which is defined as $[\partial \Omega \times (0,T)] \cup [\partial B_R \times (0,T)]$.

Next, we proceed to compute and estimate the expression $(\overline{P}_1 \overline{v}, \overline{P}_2 \overline{v})_{L^2(\overline{Q})}$ by examining each term individually.

{\it (i): Calculate $(\overline{P}_{11} \overline{v}, \overline{P}_{21} \overline{v})_{L^2(\overline{Q})}$.}

Indeed, from conditions a) and b), we obtain
\begin{equation*}
	\begin{split}
		(\overline{P}_{11} \overline{v}, \overline{P}_{21} \overline{v})_{L^2(\overline{Q})}
		&= \iint_{\overline{Q}} (\partial_t \overline{v}) \text{Div}(w \nabla \overline{v}) \, dx \, dt = -\iint_{\overline{Q}} w \nabla \overline{v} \cdot \nabla \partial_t \overline{v} \, dx \, dt\\
		&= -\frac{1}{2} \iint_{\overline{Q}} \partial_t \left(w |\nabla \overline{v}|^2\right) \, dx \, dt = 0.
	\end{split}
\end{equation*}

{\it (ii): compute and estimate $(\overline{P}_{12} \overline{v}, \overline{P}_{21} \overline{v})_{L^2(\overline{Q})}$.}

Given that $\frac{\partial \overline{\varphi}_0}{\partial \nu} = (\nabla \kappa \cdot \nu) \widehat{\varphi}_0 + \kappa (\nabla \widehat{\varphi}_0 \cdot \nu) = 0$ on $\partial B_R$, we have
\begin{eqnarray*}
	&&(\overline{P}_{12} \overline{v}, \overline{P}_{21} \overline{v})_{L^2(\overline{Q})}\\
	&&= -2s\lambda \iint_{\overline{Q}} \overline{\xi} (w \nabla \overline{v} \cdot \nabla \overline{\eta}) \text{Div}(w \nabla \overline{v}) \, dx \, dt\\
	&&= -2s\lambda \iint_{\overline{Q}} \text{Div} \left[\overline{\xi} (w \nabla \overline{v} \cdot \nabla \overline{\eta}) w \nabla \overline{v}\right] \, dx \, dt + 2s\lambda \iint_{\overline{Q}} w \nabla \overline{v} \cdot \nabla \left[\overline{\xi} (w \nabla \overline{v} \cdot \nabla \overline{\eta})\right] \, dx \, dt\\
	&&= -2s\lambda \iint_{\partial \Omega \times (0,T)} \overline{\xi} (w \nabla \overline{v} \cdot \nabla \overline{\eta}) w \nabla \overline{v} \cdot \nu \, dS \, dt + 2s\lambda \iint_{\partial B_R \times (0,T)} \overline{\xi} (w \nabla \overline{v} \cdot \nabla \overline{\eta}) w \nabla \overline{v} \cdot \nu \, dS \, dt\\
	&&\hspace{4.5mm} + 2s\lambda \iint_{\overline{Q}} (w \nabla \overline{v} \cdot \nabla \overline{\xi}) (w \nabla \overline{v} \cdot \nabla \overline{\eta}) \, dx \, dt + 2s\lambda \iint_{\overline{Q}} \overline{\xi} (w \nabla \overline{v} \cdot \nabla w) (\nabla \overline{v} \cdot \nabla \overline{\eta}) \, dx \, dt\\
	&&\hspace{4.5mm} + 2s\lambda \iint_{\overline{Q}} \overline{\xi} w^2 \nabla \overline{v} \cdot \nabla (\nabla \overline{v} \cdot \nabla \overline{\eta}) \, dx \, dt\\
	&&= s\lambda \iint_{\partial \Omega \times (0,T)} \overline{\xi} |x|^{2\alpha} \left|\frac{\partial \overline{v}}{\partial \nu}\right|^2 \left|\frac{\partial \overline{\eta}}{\partial \nu}\right| \, dS \, dt + 2s\lambda \iint_{\overline{Q}} \overline{\xi} |x|^{2\alpha} (D^2 \overline{\eta} \nabla \overline{v}) \cdot \nabla \overline{v} \, dx \, dt\\
	&&\hspace{4.5mm} + 2s\lambda^2 \iint_{\overline{Q}} \overline{\xi} |x|^{2\alpha} (\nabla \overline{v} \cdot \nabla \overline{\eta})^2 \, dx \, dt + 2\alpha s\lambda \iint_{\overline{Q}} \overline{\xi} |x|^{2\alpha-2} (\nabla \overline{v} \cdot x) (\nabla \overline{v} \cdot \nabla \overline{\eta}) \, dx \, dt\\
	&&\hspace{4.5mm} - \alpha s\lambda \iint_{\overline{Q}} \overline{\xi} |x|^{2\alpha-2} |\nabla \overline{v}|^2 (x \cdot \nabla \overline{\eta}) \, dx \, dt - s\lambda \iint_{\overline{Q}} \overline{\xi} |x|^\alpha |\nabla \overline{v}|^2 \text{Div}(|x|^\alpha \nabla \overline{\eta}) \, dx \, dt\\
	&&\hspace{4.5mm} - s\lambda^2 \iint_{\overline{Q}} \overline{\xi} |x|^{2\alpha} |\nabla \overline{v}|^2 |\nabla \overline{\eta}|^2 \, dx \, dt
\end{eqnarray*}
The following equalities are used in the above derivation
\begin{equation*}
	2s\lambda \iint_{\overline{Q}} (w \nabla \overline{v} \cdot \nabla \overline{\xi}) (w \nabla \overline{v} \cdot \nabla \overline{\eta}) \, dx \, dt = 2s\lambda^2 \iint_{\overline{Q}} \overline{\xi} |x|^{2\alpha} (\nabla \overline{v} \cdot \nabla \overline{\eta})^2 \, dx \, dt,
\end{equation*}
and
\begin{equation*}
	\begin{split}
		2s\lambda \iint_{\overline{Q}} \overline{\xi} (w \nabla \overline{v} \cdot \nabla w) (\nabla \overline{v} \cdot \nabla \overline{\eta}) \, dx \, dt = 2\alpha s\lambda \iint_{\overline{Q}} \overline{\xi} |x|^{2\alpha-2} (\nabla \overline{v} \cdot x) (\nabla \overline{v} \cdot \nabla \overline{\eta}) \, dx \, dt,
	\end{split}
\end{equation*}
and by $\nabla \overline{v} \cdot \nabla (\nabla \overline{v} \cdot \nabla \overline{\eta}) = \frac{1}{2} \nabla \overline{\eta} \cdot \nabla |\nabla \overline{v}|^2 + (D^2 \overline{\eta} \nabla \overline{v}) \cdot \nabla \overline{v}$, we have
\begin{eqnarray*}
	&&2s\lambda \iint_{\overline{Q}} \overline{\xi} w^2 \nabla \overline{v} \cdot \nabla (\nabla \overline{v} \cdot \nabla \overline{\eta}) \, dx \, dt\\
	&&= s\lambda \iint_{\overline{Q}} \overline{\xi} |x|^{2\alpha} \nabla \overline{\eta} \cdot \nabla |\nabla \overline{v}|^2 \, dx \, dt + 2s\lambda \iint_{\overline{Q}} \overline{\xi} |x|^{2\alpha} (D^2 \overline{\eta} \nabla \overline{v}) \cdot \nabla \overline{v} \, dx \, dt\\
	&&= -s\lambda \iint_{\partial \Omega \times (0,T)} \overline{\xi} |x|^{2\alpha} \left|\frac{\nabla \overline{v}}{\partial \nu}\right|^2 \left|\frac{\partial \overline{\eta}}{\partial \nu}\right| \, dS \, dt + s\lambda \iint_{\partial B_R \times (0,T)} \overline{\xi} |x|^{2\alpha} \left|\frac{\nabla \overline{v}}{\partial \nu}\right|^2 \left|\frac{\partial \overline{\eta}}{\partial \nu}\right| \, dS \, dt\\
	&&\hspace{4.5mm} - s\lambda \iint_{\overline{Q}} |\nabla \overline{v}|^2 \text{Div} \left(\overline{\xi} |x|^{2\alpha} \nabla \overline{\eta}\right) \, dx \, dt + 2s\lambda \iint_{\overline{Q}} \overline{\xi} |x|^{2\alpha} (D^2 \overline{\eta} \nabla \overline{v}) \cdot \nabla \overline{v} \, dx \, dt\\
	&&= -s\lambda \iint_{\partial \Omega \times (0,T)} \overline{\xi} |x|^{2\alpha} \left|\frac{\nabla \overline{v}}{\partial \nu}\right|^2 \left|\frac{\partial \overline{\eta}}{\partial \nu}\right| \, dS \, dt + 2s\lambda \iint_{\overline{Q}} \overline{\xi} |x|^{2\alpha} (D^2 \overline{\eta} \nabla \overline{v}) \cdot \nabla \overline{v} \, dx \, dt\\
	&&\hspace{4.5mm} - s\lambda^2 \iint_{\overline{Q}} \overline{\xi} |x|^{2\alpha} |\nabla \overline{v}|^2 |\nabla \overline{\eta}|^2 \, dx \, dt - \alpha s\lambda \iint_{\overline{Q}} \overline{\xi} |x|^{2\alpha-2} |\nabla \overline{v}|^2 (x \cdot \nabla \overline{\eta}) \, dx \, dt\\
	&&\hspace{4.5mm} - s\lambda \iint_{\overline{Q}} \overline{\xi} |x|^\alpha |\nabla \overline{v}|^2 \text{Div}(|x|^\alpha \nabla \overline{\eta}) \, dx \, dt.
\end{eqnarray*}

{\it (iii): Compute $(\overline{P}_{13}\overline{v}, \overline{P}_{21}\overline{v})_{L^2(\overline{Q})}$.}

Indeed, given $\overline{v}=0$ on $\partial\overline{Q}$, we have
\begin{eqnarray*}
	&&(\overline{P}_{13}\overline{v}, \overline{P}_{21}\overline{v})_{L^2(\overline{Q})}\\
	&&=-2s\lambda^2\iint_{\overline{Q}}\overline{\xi}\overline{v}w|\nabla\overline{\eta}|^2 \Div(w\nabla \overline{v})\,dx\,dt\\
	&&=+2s\lambda^2\iint_{\overline{Q}}w\nabla \overline{v} \cdot \nabla \left[\overline{\xi} \overline{v}w|\nabla\overline{\eta}|^2\right]\,dx\,dt\\
	&&=+2s\lambda^2\iint_{\overline{Q}}\overline{\xi} |x|^{2\alpha}|\nabla \overline{v}|^2|\nabla \overline{\eta}|^2\,dx\,dt + 2s\lambda^3\iint_{\overline{Q}}\overline{\xi} |x|^{2\alpha} \overline{v}(\nabla \overline{v} \cdot \nabla \overline{\eta})|\nabla \overline{\eta}|^2\,dx\,dt\\
	&&\quad + 2\alpha s\lambda^2\iint_{\overline{Q}}\overline{\xi}|x|^{2\alpha-2}\overline{v}(\nabla \overline{v} \cdot x)|\nabla \overline{\eta}|^2\,dx\,dt + 4s\lambda^2\iint_{\overline{Q}}\overline{\xi} |x|^{2\alpha}\overline{v}(D^2\overline{\eta}\nabla \overline{\eta}) \cdot \nabla \overline{v}\,dx\,dt.
\end{eqnarray*}

The (i)-(iii), and  $R\leq |x|\leq m$ and $\eta\in C^2(\overline{\Omega})$, and
\begin{eqnarray*}
	&& 2s\lambda\iint_{\overline{Q}}\overline{\xi}|x|^{2\alpha}(D^2\overline{\eta}\nabla \overline{v}) \cdot \nabla \overline{v}\,dx\,dt \geq -Cs\lambda\iint_{\overline{Q}}\overline{\xi} |\nabla \overline{v}|^2\,dx\,dt, \\
	&&2\alpha s\lambda\iint_{\overline{Q}}\overline{\xi}|x|^{2\alpha-2}(\nabla \overline{v} \cdot x)(\nabla \overline{v} \cdot \nabla \overline{\eta})\,dx\,dt \geq -Cs\lambda\iint_{\overline{Q}}\overline{\xi} |\nabla \overline{v}|^2\,dx\,dt,\\
	&&-\alpha s\lambda\iint_{\overline{Q}}\overline{\xi} |x|^{2\alpha-2}|\nabla \overline{v}|^2(x \cdot \nabla \overline{\eta})\,dx\,dt \geq -Cs\lambda \iint_{\overline{Q}}\overline{\xi} |\nabla \overline{v}|^2\,dx\,dt, \\
	&&-s\lambda\iint_{\overline{Q}}\overline{\xi} |x|^\alpha |\nabla \overline{v}|^2\Div(|x|^\alpha \nabla \eta)\,dx\,dt \geq -Cs\lambda \iint_{\overline{Q}}\overline{\xi}|\nabla \overline{v}|^2\,dx\,dt,
\end{eqnarray*}
and
\begin{equation*}
	\begin{split}
		&2s\lambda^3\iint_{\overline{Q}}\overline{\xi} |x|^{2\alpha}\overline{v}(\nabla \overline{v} \cdot \nabla \overline{\eta})|\nabla \overline{\eta}|^2\,dx\,dt\\
		&\geq -C \lambda^2\iint_{\overline{Q}}\overline{\xi} (\nabla \overline{v} \cdot \nabla \overline{\eta})^2\,dx\,dt - Cs^2\lambda^4\iint_{\overline{Q}} \overline{\xi} \overline{v}^2|\nabla \overline{\eta}|^4\,dx\,dt,
	\end{split}
\end{equation*}
and
\begin{equation*}
	\begin{split}
		&2\alpha s\lambda^2\iint_{\overline{Q}}\overline{\xi} |x|^{2\alpha-2}\overline{v}(\nabla\overline{v} \cdot x)|\nabla \overline{\eta}|^2\,dx\,dt\\
		&\geq -Cs\lambda\iint_{\overline{Q}}\overline{\xi} |\nabla \overline{v}|^2\,dx\,dt - Cs\lambda^3\iint_{\overline{Q}}\overline{\xi} \overline{v}^2|\nabla\overline{\eta}|^4\,dx\,dt,
	\end{split}
\end{equation*}
and
\begin{equation*}
	\begin{split}
		&+4s\lambda^2\iint_{\overline{Q}}\overline{\xi} |x|^{2\alpha}\overline{v}(D^2\overline{\eta}\nabla \overline{\eta})\nabla \overline{v}\,dx\,dt \geq -Cs\iint_{\overline{Q}}\overline{\xi} |\nabla \overline{v}|^2\,dx\,dt - Cs\lambda^4\iint_{\overline{Q}} \overline{\xi} \overline{v}^2\,dx\,dt,
	\end{split}
\end{equation*}
lead to
\begin{equation}\label{12.24.2}
	\begin{split}
		&(\overline{P}_{12}\overline{v}, \overline{P}_{21}\overline{v}){L^2(\overline{Q})}+(\overline{P}_{13}, \overline{P}_{21}){L^2(\overline{Q})}\\
		&\geq +s\lambda\iint_{\partial\Omega\times (0,T)}\overline{\xi}|x|^{2\alpha}\left|\frac{\partial \overline{v}}{\partial \nu}\right|^2\left|\frac{\partial \overline{\eta}}{\partial \nu}\right|\,dS\,dt + R^{2\alpha}s\lambda^2\iint_{\overline{Q}}\overline{\xi}(\nabla \overline{v} \cdot\nabla \overline{\eta})^2\,dx\,dt\\
		&\quad + 2R^{2\alpha}s\lambda^2\iint_{\overline{Q}}\overline{\xi} |\nabla \overline{v}|^2|\nabla \overline{\eta}|^2\,dx\,dt\\
		&\quad - Cs\lambda\iint_{\overline{Q}}\overline{\xi} |\nabla \overline{v}|^2\,dx\,dt - Cs^2\lambda^4\iint_{\overline{Q}}\overline{\xi} \overline{v}^2|\nabla \overline{\eta}|^4\,dx\,dt - Cs\lambda^4\iint_{\overline{Q}}\overline{\xi} \overline{v}^2\,dx\,dt
	\end{split}
\end{equation}
for $\lambda\geq C\geq 1$ and $s\geq C\geq 1$.

{\it Compute $(\overline{P}_{11}\overline{v}, \overline{P}_{22} \overline{v})_{L^2(\overline{Q})}$.}

Indeed, by a), we have
\begin{equation*}
	\begin{split}
		&(\overline{P}_{11}\overline{v}, \overline{P}_{22} \overline{v}){L^2(\overline{Q})}=+s \iint{\overline{Q}}(\partial_t\overline{v})\overline{v}\overline{\sigma}_t\,dx\,dt=-\frac{s}{2}\iint_{\overline{Q}}\overline{v}^2\overline{\sigma}_{tt}\,dx\,dt.
	\end{split}
\end{equation*}

{\it (v): Compute $(\overline{P}_{12}\overline{v}, \overline{P}_{22}\overline{v})_{L^2(\overline{Q})}$.}

Indeed, by b), we have
\begin{equation*}
	\begin{split}
		&(\overline{P}_{12}\overline{v}, \overline{P}_{22}\overline{v})_{L^2(\overline{Q})}\\
		&=-2s^2\lambda \iint_{\overline{Q}}\overline{\xi} w(\nabla \overline{v}\cdot \nabla \overline{\eta}) \overline{v} \overline{\sigma}_t\,dx\,dt =+s^2\lambda\iint_{\overline{Q}}\overline{v}^2\Div\left[\overline{\xi} \overline{\sigma}_t w\nabla \overline{\eta}\right]\,dx\,dt\\
		&=+s^2\lambda^2\iint_{\overline{Q}}\overline{\xi} \overline{\sigma}_t \overline{v}^2|x|^\alpha  |\nabla \overline{\eta}|^2 \,dx\,dt+s^2\lambda\iint_{\overline{Q}}\overline{\xi} \overline{\sigma}_t\overline{v}^2\Div(|x|^\alpha \nabla \overline{\eta})\,dx\,dt\\
		&\hspace{4.5mm}-s^2\lambda^2\iint_{\overline{Q}}\overline{\xi}\overline{\xi}_t \overline{v}^2|x|^\alpha |\nabla \overline{\eta}|^2\,dx\,dt.
	\end{split}
\end{equation*}

{\it (vi): Compute $(\overline{P}_{13}\overline{v}, \overline{P}_{22}\overline{v})_{L^2(\overline{Q})}$.}

Indeed, we have
\begin{equation*}
	\begin{split}
		(\overline{P}_{13}\overline{v}, \overline{P}_{22}\overline{v})_{L^2(\overline{Q})}
		&=-2s^2\lambda^2\iint_{\overline{Q}}\overline{\xi} \overline{\sigma}_t\overline{v}^2|x|^\alpha |\nabla \overline{\eta}|^2\,dx\,dt.
	\end{split}
\end{equation*}

{\it (vii): Compute $(\overline{P}_{11}\overline{v}, \overline{P}_{23}\overline{v})_{L^2(\overline{Q})}$.}

Indeed, by i),  we have
\begin{equation*}
	\begin{split}
		(\overline{P}_{11}\overline{v}, \overline{P}_{23}\overline{v})_{L^2(\overline{Q})}
		&=+s^2\lambda^2\iint_{\overline{Q}}\overline{\xi}^2 w|\nabla \overline{\eta}|^2 \overline{v}\partial_t\overline{v}\,dx\,dt=-s^2\lambda^2\iint_{\overline{Q}}\xi\xi_t\overline{v}^2w|\nabla\overline{\eta}|^2\,dx\,dt.
	\end{split}
\end{equation*}
(E2) From (4)-(7), $|\overline{\sigma}_{tt}|\leq C\overline{\xi}^{\frac{3}{2}}\leq C\overline{\xi}^3$, $|\overline{\xi}\overline{\sigma}_t|\leq C\overline{\xi}^3$, $|\xi\xi_t|\leq C\overline{\xi}^3$, we obtain
\begin{equation}\label{12.24.3}
	\begin{split}
		&(\overline{P}_{11}\overline{v},\overline{P}_{22}\overline{v}){L^2(\overline{Q})}+(\overline{P}_{11}\overline{v}, \overline{P}_{23}\overline{v}){L^2(\overline{Q})} \geq
		-Cs^2\lambda^2\iint_{\overline{Q}}\overline{\xi}^3 \overline{v}^2\,dx\,dt.
	\end{split}
\end{equation}

{\it (viii):  Compute $(\overline{P}_{12}\overline{v}, \overline{P}_{23}\overline{v})_{L^2(\overline{Q})}$.}

Indeed, by b), we have
\begin{equation*}
	\begin{split}
		(\overline{P}_{12}\overline{v}, \overline{P}_{23}\overline{v})_{L^2(\overline{Q})}
		&=-2s^3\lambda^3\iint_{\overline{Q}}\overline{\xi}^3 w|\nabla \overline{\eta}|^2 (w\nabla \overline{v}\cdot \nabla \eta)\overline{v}\,dx\,dt\\
		&=+3s^3\lambda^4\iint_{\overline{Q}}\overline{\xi}^3\overline{v}^2|x|^{2\alpha}|\nabla \overline{\eta}|^4\,dx\,dt+s^3\lambda^3\iint_{\overline{Q}}\overline{\xi}^3\overline{v}^2\Div(|x|^{2\alpha}|\nabla \overline{\eta}|^2\nabla \overline{\eta})\,dx\,dt.
	\end{split}
\end{equation*}

{\it (ix): Compute $(\overline{P}_{13}\overline{v}, \overline{P}_{23}\overline{v})_{L^2(\overline{Q})}$.}

Indeed, we have
\begin{equation*}
	\begin{split}
		(\overline{P}_{13}\overline{v}, \overline{P}_{23}\overline{v}){L^2(\overline{Q})}=-2s^3\lambda^4\iint{\overline{Q}}\overline{\xi}^3 \overline{v}^2 |x|^{2\alpha}|\nabla \overline{\eta}|^4\,dx\,dt.
	\end{split}
\end{equation*}

From (viii)-(ix), we obtain
\begin{equation}\label{12.24.4}
	\begin{split}
		(\overline{P}_{12}\overline{v}, \overline{P}_{23}\overline{v}){L^2(\overline{Q})}+(\overline{P}_{13}\overline{v}, \overline{P}_{23}\overline{v}){L^2(\overline{Q})}\geq+ s^3\lambda^4\iint_{\overline{Q}}\overline{\xi}^3\overline{v}^2|\nabla\overline{\eta}|^4\,dx\,dt-Cs^3\lambda^3\iint_{\overline{Q}}\overline{\xi}^3\overline{v}^2\,dx\,dt.
	\end{split}
\end{equation}

{\it (x): Combining \eqref{12.24.1}, \eqref{12.24.2} and \eqref{12.24.3}, by $2(\overline{P}_1\overline{v}, \overline{P}_2\overline{v})_{L^2(\overline{Q})}$,}  we obtain
\begin{equation*}
	\begin{split}
		&+s\lambda^2\iint_{\overline{Q}}\overline{\xi} (\nabla\overline{v}\cdot \nabla \overline{\eta})^2\,dx\,dt+s\lambda^2\iint_{\overline{Q}}\overline{\xi} |\nabla \overline{v}|^2|\nabla \overline{\eta}|^2\,dx\,dt +s^3\lambda^4\iint_{(\Omega\setminus B_{5R})\times (0,T)}\overline{\xi}^3\overline{v}^2|\nabla\overline{\eta}|^4\,dx\,dt\\
		&\leq +Cs^3\lambda^4\iint_{(B_{5R}\setminus B_{4R})\times (0,T)}\overline{\xi} ^3\overline{v}^2\,dx\,dt+Cs\lambda \iint_{\overline{Q}}\overline{\xi} |\nabla \overline{v}|^2\,dx\,dt+C\left\|e^{-s\overline{\sigma}}g\right\|_{L^2(\overline{Q})}.
	\end{split}
\end{equation*}
From Lemma \ref{12.23.L2} (i.e., $|\nabla \overline{\eta}|\geq C>0$ on $\overline{(\Omega\setminus B_R)\setminus \widehat{\omega}}$), we have
\begin{equation}\label{12.24.5}
	\begin{split}
		&+s\lambda^2\iint_{\overline{Q}}\overline{\xi} |\nabla \overline{v}|^2 \,dx\,dt +s^3\lambda^4\iint_{\overline{Q}}\overline{\xi}^3\overline{v}^2 \,dx\,dt\\
		&\leq +Cs^3\lambda^4\iint_{\widehat{\omega} \times (0,T)}\overline{\xi} ^3\overline{v}^2\,dx\,dt+Cs\lambda \iint_{\widehat{\omega} \times (0,T)}\overline{\xi} |\nabla \overline{v}|^2\,dx\,dt+C\left\|e^{-s\overline{\sigma}}g\right\|_{L^2(\overline{Q})}.
	\end{split}
\end{equation}

{\it (xi): It is easily verified that} (from the definition of $\overline{P}_1\overline{v}, \overline{P}_2\overline{v}$)
\begin{equation*}
	\begin{split}
		+s^{-1}\iint_{\overline{Q}}\overline{\xi}^{-1}|\partial_t\overline{v}|^2\leq +Cs\lambda^2\iint_{\overline{Q}}\overline{\xi} |\nabla \overline{v}|^2\,dx\,dt+Cs\lambda^4\iint_{Q} \overline{\xi} \overline{v}^2\,dx\,dt+\|\overline{P}_1\overline{v}\|_{L^2(\overline{Q})}^2,
	\end{split}
\end{equation*}
and
\begin{equation*}
	\begin{split}
		s^{-1}\iint_{\overline{Q}}\xi^{-1}|\Div(|x|^\alpha \nabla \overline{v})|^2\,dx\,dt\leq Cs^3\lambda^4\iint_{\overline{Q}}\overline{\xi}^3\overline{v}^2\,dx\,dt+Cs\iint_{\overline{Q}}\overline{\xi}^3 \overline{v}^2\,dx\,dt+\|\overline{P}_2\overline{v}\|_{L^2(\overline{Q})}^2.
	\end{split}
\end{equation*}
This together with \eqref{12.24.5} gives
\begin{equation}\label{12.24.6}
	\begin{split}
		&+\iint_{\overline{Q}} \left[s^{-1}\overline{\xi}^{-1}\left(|\partial_t \overline{v}|^2+|\Div(|x|^\alpha \nabla \overline{v})|^2\right)+s\lambda^2 \overline{\xi} |\nabla \overline{v}|^2+s^3\lambda^4 \xi^3 \overline{v}^2\right]\,dx\,dt\\
		&\leq +C\iint_{\overline{Q}}e^{-2s\overline{\sigma}}g^2\,dx\,dt+Cs\lambda^2\iint_{\widehat{\omega}\times (0,T)}\overline{\xi} |\nabla \overline{v}|^2\,dx\,dt+Cs^3\lambda^4\iint_{\widehat{\omega} \times (0,T)} \overline{\xi}^3|\overline{v}|^2\,dx\,dt.
	\end{split}
\end{equation}

Choose $\rho \in C^\infty(\mathbb{R}^N), 0 \leq \rho \leq 1$, satisfying
\begin{equation*}
	\rho = 1   \text{ on } A_{4R,5R},   \rho = 0   \text{ on } \mathbb{R}^N \setminus B_{6R}.
\end{equation*}
Then,
\begin{eqnarray*}
	&&+s\lambda^2 \iint_{\Omega \times (0,T)} \overline{\xi} \rho^2 |x|^\alpha |\nabla \overline{v}|^2 \, dx \, dt \\
	&&= -s\lambda^2 \iint_{\Omega \times (0,T)} \overline{\xi} \rho^2 \Div(|x|^\alpha \nabla \overline{v}) \overline{v} \, dx \, dt - s\lambda^3 \iint_{\Omega \times (0,T)} \overline{\xi} \rho^2 v |x|^\alpha \nabla \overline{v} \cdot \nabla \overline{\eta} \, dx \, dt \\
	&&\hspace{4.5mm} - 2s\lambda^2 \iint_{\Omega \times (0,T)} \overline{\xi} \rho v |x|^\alpha \nabla \overline{v} \cdot \nabla \rho \, dx \, dt \\
	&&\leq +\varepsilon s^{-1} \iint_{\Omega \times (0,T)} \overline{\xi}^{-1} |\Div(|x|^\alpha \nabla \overline{v})|^2 \, dx \, dt + \varepsilon s\lambda^2 \iint_{\Omega \times (0,T)} \overline{\xi} \rho^2 |x|^\alpha |\nabla \overline{v}|^2 \, dx \, dt \\
	&&\hspace{4.5mm} + C(\varepsilon)s^3\lambda^4 \iint_{\Omega \times (0,T)} \overline{\xi}^3 \overline{v}^2 \, dx \, dt + C(\varepsilon)s\lambda^4 \iint_{\Omega \times (0,T)} \overline{\xi} \overline{v}^2 \, dx \, dt.
\end{eqnarray*}
Taking $\varepsilon > 0$ sufficiently small, and considering
\begin{equation*}
	s\lambda^2 \iint_{\widehat{\Omega} \times (0,T)} \overline{\xi} |\nabla \overline{v}|^2 \, dx \, dt \leq Cs\lambda^2 \iint_{\Omega \times (0,T)} \overline{\xi} \rho^2 |x|^2 |\nabla \overline{v}|^2 \, dx \, dt,
\end{equation*}
we obtain
\begin{equation}\label{12.24.7}
	\begin{split}
		&\iint_{\overline{Q}} \left[ s^{-1}\overline{\xi}^{-1} \left( |\partial_t \overline{v}|^2 + |\Div(|x|^\alpha \nabla \overline{v})|^2 \right) + s\lambda^2 \overline{\xi} |\nabla \overline{v}|^2 + s^3\lambda^4 \xi^3 \overline{v}^2 \right] \, dx \, dt \\
		&\leq C\iint_{\overline{Q}} e^{-2s\overline{\sigma}} g^2 \, dx \, dt + Cs^3\lambda^4 \iint_{\widehat{\Omega} \times (0,T)} \overline{\xi}^3 |\overline{v}|^2 \, dx \, dt.
	\end{split}
\end{equation}
This follows from \eqref{12.24.6}.
Next, using $\nabla \overline{\varphi}_0 = e^{s\overline{\sigma}} (\nabla \overline{v} - s\lambda \overline{\xi} \overline{v} \nabla \overline{\eta})$, we find
\begin{equation*}
	\begin{split}
		&s\lambda^2 \iint_{\overline{Q}} e^{-2s\overline{\sigma}} \overline{\xi} |\nabla \overline{\varphi}_0|^2 \, dx \, dt + s^3\lambda^4 \iint_{\overline{Q}} e^{-2s\overline{\sigma}} \overline{\xi}^3 \overline{\varphi}_0^2 \, dx \, dt \\
		&\leq C\iint_{\overline{Q}} e^{-2s\overline{\sigma}} g^2 \, dx \, dt + Cs^3\lambda^4 \iint_{\Omega \times (0,T)} e^{-2s\overline{\sigma}} \overline{\xi}^3 \overline{\varphi}_0^2 \, dx \, dt.
	\end{split}
\end{equation*}
From this, we derive the following Theorem \ref{12.24.T1}.
\begin{theorem}\label{12.24.T1}
	Let $\overline{\varphi}_0$ be a solution of \eqref{12.24.1}. Then, there exist constants $\lambda_0 \geq 1, s_0 \geq 1$, and $C > 0$ such that for all $\lambda \geq \lambda_0$ and for any $s \geq s_0$, we have
	\begin{equation}\label{12.24.8}
		\begin{split}
			&s\lambda^2 \iint_{\overline{Q}} e^{-2s\overline{\sigma}} \overline{\xi} |\nabla \overline{\varphi}_0|^2 \, dx \, dt + s^3\lambda^4 \iint_{\overline{Q}} e^{-2s\overline{\sigma}} \overline{\xi}^3 \overline{\varphi}_0^2 \, dx \, dt \\
			&\leq C\iint_{\overline{Q}} e^{-2s\overline{\sigma}} g^2 \, dx \, dt + Cs^3\lambda^4 \iint_{\Omega \times (0,T)} e^{-2s\overline{\sigma}} \overline{\xi}^3 \overline{\varphi}_0^2 \, dx \, dt.
		\end{split}
	\end{equation}
\end{theorem}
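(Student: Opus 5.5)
The statement is the end product of the computations preceding it, so the plan is to turn them into a self-contained argument in three stages.

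\textbf{Stage 1: conjugated Carleman inequality for $\overline{v}$.} I will work with $\overline{v}=e^{-s\overline{\sigma}}\overline{\varphi}_0$ on $\overline{Q}$. The system \eqref{12.24.1} is uniformly parabolic, since $w=|x|^\alpha$ is bounded above and below on $\Omega\setminus B_{4R}$. Consequently $\overline{v}$ has enough regularity for the integrations by parts in (i)--(ix). Moreover, $\overline{v}$ and $\nabla\overline{v}$ vanish at $t=0,T$ because $e^{-s\overline{\sigma}}$ decays like $e^{-c\Theta}$ there, which uses $\overline{\sigma}\ge c\Theta>0$.

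Starting from $\overline{g}=\overline{P}_1\overline{v}+\overline{P}_2\overline{v}$, I will use
\[
\|\overline{P}_1\overline{v}\|^2+\|\overline{P}_2\overline{v}\|^2+2(\overline{P}_1\overline{v},\overline{P}_2\overline{v})=\|\overline{g}\|^2 .
\]
I then collect the lower bounds \eqref{12.24.2}, \eqref{12.24.3} and \eqref{12.24.4}. The boundary term on $\partial\Omega$ has a favorable sign because $\partial_\nu\overline{\eta}\le0$ there, so I drop it. The inner boundary terms vanish, since $\kappa=\nabla\kappa=0$ near $\partial B_{4R}$.

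The extra terms in $\overline{g}$ are bounded by
\[
s^2\lambda^4\iint\overline{\xi}^2\overline{v}^2 \qquad\text{and}\qquad s^2\lambda^2\iint\overline{\xi}^2\overline{v}^2 .
\]
These are absorbed by $s^3\lambda^4\iint\overline{\xi}^3\overline{v}^2$, using $\overline{\xi}\ge cT^{-8}$, once $s\ge s_0$.

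\textbf{Stage 2: absorbing lower-order terms and localizing.} This is the main obstacle. The lower-order terms $Cs\lambda\overline{\xi}|\nabla\overline{v}|^2$ and $Cs^3\lambda^3\overline{\xi}^3\overline{v}^2$ can be absorbed by the leading terms only where $|\nabla\overline{\eta}|\ge c>0$. By Lemma \ref{12.23.L2}, this holds outside $\widehat\omega:=B_{5R}\setminus B_{4R}$. So I first choose $\lambda\ge\lambda_0$ and then $s\ge s_0(\lambda)$, and move the contribution of $\widehat\omega\times(0,T)$ to the right-hand side. This yields \eqref{12.24.5}.

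The gradient term on $\widehat\omega$ must then be removed by a Caccioppoli-type argument. I take the cutoff $\rho$, which equals $1$ on $\widehat\omega$, multiply by $\rho^2\overline{\xi}\,\overline{v}$, and integrate by parts in space. This bounds $s\lambda^2\iint\overline{\xi}\rho^2|\nabla\overline{v}|^2$ by three pieces:
\begin{itemize}
\item a small multiple of $s^{-1}\iint\overline{\xi}^{-1}|\Div(w\nabla\overline{v})|^2$, which is controlled through $\|\overline{P}_2\overline{v}\|^2$, as in (xi);
\item a small multiple of the same gradient term, absorbed on the left;
\item $C s^3\lambda^4\iint_{\mathrm{supp}\,\rho}\overline{\xi}^3\overline{v}^2$.
\end{itemize}
This gives \eqref{12.24.7}.

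\textbf{Stage 3: returning to $\overline{\varphi}_0$.} I substitute
\[
\nabla\overline{\varphi}_0=e^{s\overline{\sigma}}\bigl(\nabla\overline{v}-s\lambda\overline{\xi}\,\overline{v}\nabla\overline{\eta}\bigr).
\]
The cross term is bounded by $s^3\lambda^4\overline{\xi}^3\overline{v}^2$, which is already on the left of \eqref{12.24.7}. Rewriting every term in terms of $e^{-2s\overline{\sigma}}\overline{\varphi}_0^2$ and $e^{-2s\overline{\sigma}}|\nabla\overline{\varphi}_0|^2$ gives \eqref{12.24.8}.

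The delicate point throughout is the order in which the parameters are fixed: first $\lambda$, then $s$, and the Caccioppoli $\varepsilon$ independently of both. This order is what ensures that all constants $C$ depend only on $\alpha,R,N,T,\Omega$.
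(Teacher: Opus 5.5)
Your proposal is correct and follows essentially the same route as the paper. The shared steps are: conjugation by $e^{-s\overline{\sigma}}$; the splitting $\overline{g}=\overline{P}_1\overline{v}+\overline{P}_2\overline{v}$ with the cross-term bounds \eqref{12.24.2}--\eqref{12.24.4}; absorption of lower-order terms where $|\nabla\overline{\eta}|\ge c$, i.e.\ outside $B_{5R}\setminus B_{4R}$ by Lemma \ref{12.23.L2}; removal of the local gradient term by the $\rho$-cutoff argument, using the $s^{-1}\overline{\xi}^{-1}|\Div(w\nabla\overline{v})|^2$ control from (xi); and the return to $\overline{\varphi}_0$.

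One small sharpening concerns your $s_0(\lambda)$. Because the weights use the exponents $8|\overline{\eta}|_\infty$ and $10|\overline{\eta}|_\infty$, one has $\overline{\xi}|\overline{\sigma}_t|+\overline{\xi}|\overline{\xi}_t|+|\overline{\sigma}_{tt}|\le C(T)\,\overline{\xi}^3$ uniformly in $\lambda$. Hence every largeness condition on $s$ is independent of $\lambda$, and your $s_0(\lambda)$ can be replaced by a single $s_0$, as the quantifiers in the statement require.
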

By Theorem \ref{12.24.T1}, and considering the definitions of $\Omega = A_{3R,6R}$ and $\overline{\varphi}_0 = \widehat{\varphi}_0$ on $(\Omega \setminus B_{5R}) \times (0,T)$, we have
\begin{eqnarray*}
	&&s\lambda^2 \iint_{(\Omega \setminus B_{5R}) \times (0,T)} e^{-2s\overline{\sigma}} \overline{\xi} |\nabla \widehat{\varphi}_0|^2 \, dx \, dt + s^3\lambda^4 \iint_{(\Omega \setminus B_{5R}) \times (0,T)} e^{-2s\overline{\sigma}} \overline{\xi}^3 \widehat{\varphi}_0^2 \, dx \, dt \\
	&&\leq C\iint_{(\Omega \setminus B_R) \times (0,T)} e^{-2s\overline{\sigma}} \left( 2w\nabla\kappa \cdot \nabla \widehat{\varphi}_0 + \widehat{\varphi}_0 \Div(w\nabla\kappa) \right) \, dx \, dt \\
	&&\quad + Cs^3\lambda^4 \iint_{(B_{6R} \setminus B_{3R}) \times (0,T)} e^{-2s\overline{\sigma}} \overline{\xi}^3 \overline{\varphi}_0^2 \, dx \, dt \\
	&&\leq C\iint_{(B_{5R} \setminus B_{4R}) \times (0,T)} e^{-2s\overline{\sigma}} \left( \widehat{\varphi}_0^2 + |\nabla \widehat{\varphi}_0|^2 \right) \, dx \, dt + Cs^3\lambda^4 \iint_{(B_{6R} \setminus B_{3R}) \times (0,T)} e^{-2s\overline{\sigma}} \overline{\xi}^3 \widehat{\varphi}_0^2 \, dx \, dt \\
	&&\leq Cs^3\lambda^4 \iint_{(B_{6R} \setminus B_{3R}) \times (0,T)} e^{-2s\overline{\sigma}} \overline{\xi}^3 \widehat{\varphi}_0^2 \, dx \, dt,
\end{eqnarray*}
where the last inequality employs a Caccioppoli-type inequality similar to Lemma \ref{12.23.L1}.
Finally, by the definition of $g$, and utilizing the inequalities
\begin{equation*}
	\Theta^3e^{-2s\overline{\sigma}} \geq C > 0   \text{ in } \Omega \times \left(\frac{T}{4}, \frac{3T}{4}\right)
\end{equation*}
and
\begin{equation*}
	\Theta^3e^{-2s\overline{\sigma}} \leq C   \text{ in } Q,
\end{equation*}
where the constants $C > 0$ depend solely on $\alpha, R, N, T$, and $\Omega$, we obtain
\begin{equation*}\label{12.22.3}
	\begin{split}
		&\iint_{(\Omega \setminus B_{6R}) \times \left(\frac{T}{4}, \frac{3T}{4}\right)} |\nabla\widehat{\varphi}_0|^2 \, dx \, dt + \iint_{(\Omega \setminus B_{6R}) \times \left(\frac{T}{4}, \frac{3T}{4}\right)} \widehat{\varphi}_0^2 \, dx \, dt \leq C \iint_{(B_{6R} \setminus B_{4R}) \times (0,T)} \widehat{\varphi}_0^2 \, dx \, dt.
	\end{split}
\end{equation*}

This, together with the definition of $\psi$, leads to the following Theorem \ref{04.18.T1g}.

\end{document}